\newtheorem{definition}{Definition}
\newtheorem{theorem}{Theorem}
\newtheorem{corollary}[theorem]{Corollary}
\newtheorem{lemma}[theorem]{Lemma}
\newtheorem{proposition}[theorem]{Proposition}
\newtheorem{conjecture}[theorem]{Conjecture}
\DeclareMathOperator*{\argmax}{arg\,max}
\theoremstyle{definition}
\newenvironment{example}
  {\pushQED{\qed}\examplex}
  {\popQED\endexamplex}
\title{Scale-free cascading failures\\
\large Generalized approach for all simple, connected graphs

}
\author{
  Agnieszka Janicka$^1$, Fiona Sloothaak$^{1}$, and Maria Vlasiou$^{2}$ \\
  $^1$\textit{Eindhoven Univeristy of Technology, 5612 AZ Eindhoven, the Netherlands}\\
  $^2$\textit{University of Twente, 7522 NB Enschede, the Netherlands}\\
}
\begin{document}
\maketitle

\begin{abstract}
    Cascading failures, wherein the failure of one component triggers subsequent failures in complex interconnected systems, pose a significant risk of disruptions and emerge across various domains. Understanding and mitigating the risk of such failures is crucial to minimize their impact and ensure the resilience of these systems.
    In multiple applications, the failure processes exhibit scale-free behavior in terms of their total failure sizes. Various models have been developed to explain the origin of this scale-free behavior. A recent study proposed a novel hypothesis, suggesting that scale-free failure sizes might be inherited from scale-free input characteristics in power networks.
    However, the scope of this study excluded certain network topologies.
   Here, motivated by power networks, we strengthen this hypothesis by generalizing to a broader range of graph topologies where this behavior is manifested.
    Our approach yields a universal theorem applicable to all simple, connected graphs, revealing that when a cascade leads to network disconnections, the total failure size exhibits a scale-free tail inherited from the input characteristics. We do so by characterizing cascade sequences of failures in the asymptotic regime. 


\end{abstract}

\keywords{Cascading failures \and Heavy-tailed failures \and Scale-free failures \and Overload failures \and Principle of a single big jump}
{\renewcommand{\arraystretch}{1.25}%

\section{Introduction} \label{intro}

A cascading failure is a dynamic process that occurs within complex networks composed of interconnected parts, where the failure of an initial component triggers a chain reaction, causing subsequent failures of successive parts \cite{DESHKAR2016, Mahmoud2019}. Cascading failures have the potential to propagate throughout the entire network, resulting in significant disruptions and disturbances for its users. Consequently, the study of cascading failures aims to understand their emergence, analyze their behavior, and develop effective strategies to mitigate associated risks. 
Prominent instances of cascading failures span diverse domains, including but not limited to power grid blackouts \cite{Guo2017}, the spread of epidemics \cite{Mahdi2017} or viruses in computer systems \cite{Crucitti2004}, congestion in transportation systems \cite{Daqing_2014}, disruption of supply chains \cite{TANG201658}, systemic risk in finance~\cite{Huang_Vodenska_Havlin_Stanley_2013}, and various other domains \cite{Watts_2002}.


In many such cascading networks, it has been observed that failure sizes, which can be the blackout size, the number of individuals affected by the epidemics, or congestion delay,  have heavy tails \cite{barabasi1999emergence, chen2001analysis, Zhang_Zeng_Li_Huang_Stanley_Havlin_2019}. Specifically, tails are often \textit{scale free}, which means that
\begin{equation}\label{totalFailureSize}\mathbb{P}(S>x)\sim Cx^{-\alpha},
\end{equation} 
where  $C>0$ and $\alpha>0$ are constants, and $f(x)\sim g(x)$ denotes that $\frac{f(x)}{g(x)}\rightarrow 1$ as $x\rightarrow \infty$.
This heavy-tailed nature implies that the occurrence of significantly large failures, which can be considered ``catastrophic'', cannot be dismissed as virtually impossible. Hence, understanding cascading failure behavior and devising strategies to prevent large-sized failures becomes imperative. With this motivation in mind, numerous studies have developed cascading failure models aiming to explain the emergence of scale-free behavior \cite{Guo2017}. In this paper, we are motivated by applications in power networks. In this setting, the cascading failures lead to blackouts and it is well-established that the distribution of blackout sizes, measured in the amount of lost power, exhibits a scale-free behavior \cite{chen2001analysis}.

Studies of cascading failures often focus on the topological aspect, i.e., to understand how the underlying network influences the spread of failures. Large research effort in this field is devoted to the analysis of \textit{structural} failures \cite{EpidemicCN2014, Pastor2001, Manzano2013, VC2011}, which refers to the mechanism where the failure spreads locally through direct neighbors. Cascading failures in power networks, which motivate the study in this paper, exhibit an intrinsically different behavior. If a line exhibits a failure, the next line to trip may be disconnected from the original source \cite{Guo2017}. This type of mechanism is called \textit{overload} failures and has also received a lot of attention \cite{CNCFreview}. For instance, in \cite{Motter2002} the authors consider a nodal overload model on \textit{scale-free networks} in which some relevant quantity is transported between nodes via the shortest path. In the context of complex networks, the term \textit{scale-free networks} refers to networks with a scale-free degree distribution \cite{hofstad2016}. Other models with similar characteristics under various performance measures are studied in \cite{Crucitti2004}, \cite{Latora2001}, and \cite{Lai2004}.
The models in \cite{Wang_2008,WANG20086671,WANG20091289,WANG20091332} are motivated by power networks, which is why they assume that the load on the network is redistributed in a manner dictated by physical laws. The objective of these studies is to understand the influence of the network's structure on the vulnerability of the system, with a particular focus on scale-free networks. As such, the findings are induced by the assumed network structure and do not generalize to other network topologies. In this paper, we show that scale-free failure sizes can emerge in other network structures as well.

 The study of scale-free failures has expanded beyond the topological aspect, providing other hypotheses for the emergence of scale-free failures. For example, the Sandpile model, proposed in \cite{SOC1987}, introduces the notion of \textit{Self-Organized Criticality} (SoC) as a potential emergence mechanism. This notion assumes that a system comprising particles naturally evolves towards  \textit{criticality} -- a stage where even a small disturbance can lead to a radical behavioral change of the system, called a \textit{phase transition}. This transition induces a redistribution of particles, leading to a cascade. For example, in the context of power networks, a \textit{critical} stage occurs when the system operates at its full capacity, and a \textit{phase transition} can be caused by a local disturbance, such as a failure of a line, leading to a cascade of line failures. The authors show that the \textit{phase transition} in self-organized systems leads to scale-free failure sizes. Models inspired by SoC have been applied to study phenomena in physics, biology, computer science or geology~\cite{PhysRevLett.69.1629,Beggs_Plenz_2003,PhysRevLett.68.1244, VALVERDE2002636}. For a comprehensive account of the SoC theory, we refer to \cite{SoCbook}. 

Specifically for power networks, which is the focus of our work, the notion of SoC has been applied to explain scale-free blackout sizes. For example, in \cite{Dobson2000} the authors present initial evidence for SoC in power networks, using time-series analysis methods. The cascade size in this and other SoC-inspired models exhibits a heavy tail \cite{Dusco2006}, which means that SoC could be a potential explanation for the scale-free nature of blackout sizes. Another notable model in this field is the OPA model \cite{OPA2002}. Similar to the Sandpile model, the redistribution of power in the OPA model can lead to line overloads and subsequent failures, creating a cascade. The model assumes two time scales, `slow' and `fast'. The `slow' time scale is used to model the continuous increase of power demand over time. This increase drives the system to operate near its full capacity. On the other hand, cascading failures happen in the `fast' time scale. A cascade may be triggered either by random line failures or when a maximum capacity has been reached due to a demand increase. The authors use a standard approach in modeling power networks by using a linear approximation of true power flow, called the \textit{DC power flow} equations \cite{OPA2002, DCPowerFlowRevisited}, which we also apply in this paper.  

Other than Self-Organized Criticality, many other mechanisms have been proposed to explain the emergence of scale-free blackouts. An example of such a model is the CASCADE model \cite{Cascade2003}. It is a probabilistic, overload failure model that can exhibit heavy-tailed failure size, but only in a critical regime. The authors show that the total failure size is given by a quasi-binomial distribution. This distribution can be approximated by the offspring distribution of a branching process, which, in a critical state, is heavy-tailed with parameter $\alpha = \frac{1}{2}$ as in \eqref{totalFailureSize}.

While this literature offers many different approaches for the analysis of blackouts in power networks, the emergence of scale-free blackout sizes is still not well understood. The SoC-inspired models exhibit the desired behavior; however, there is not enough statistical evidence showing that real power networks are indeed self-organized~\cite{Watkins_Pruessner_Chapman_Crosby_Jensen_2015}. The CASCADE model is highly stylized and the load distribution mechanism is too simplistic to capture the behavior of power networks. Hence, it does not offer a credible explanation of the emergence of scale-free blackouts. The research in complex networks offers explanations for particular graph structures, but these findings do not generalize to other topologies. Thus, the study of cascading failures in power networks continues to be a flourishing area of research. 

Our work enriches the literature on the emergence of scale-free cascading failures by strengthening a recent result. Our approach is inspired by a paper by Nesti et al.\,\cite{Nesti_2020}, which presents a novel probabilistic explanation for the emergence of scale-free blackouts. The authors propose a stochastic overload model and, under certain assumptions, derive results for the tail of the blackout distribution. The results suggest that scale-free blackout sizes may be induced by the scale-free demand distribution. In this paper, we adopt the model of \cite{Nesti_2020} and generalize the main result of the paper. By taking an alternative proof approach, we remove some of their assumptions. Thus, we obtain a universal result applicable to all connected and simple graph topologies i.e. graphs with no multi-edges or self-loops, thereby advancing our understanding of cascading failures. 


The model in \cite{Nesti_2020} assumes a power network with $n$ nodes and $m$ edges. Each node $i\in \{1,\dots,n\}$ has a specified demand $d_i$ and generation $g_i$. The nodal demand $d_i$ is a realization of the Pareto-tailed demand random variable $X_i$ with parameter~$\alpha$, where $X_i$'s are mutually independent. In the context of power networks, this assumption is justified by the well-established observation that city-size distributions are scale free \cite{ROSEN1980165, Beckmann_1958, Berry1961}. The model assumes that the demand is exogenous and the generation is planned to meet this demand. In particular, the generation is a solution to a convex direct-current optimal power flow problem (DC-OPF) \cite{DCformulation}. While individual nodes may not maintain balance, the total generation and demand in the network are equal, i.e., $\sum_{i = 1}^n d_i - g_i = 0$. This is achieved by transporting power between nodes through the edges according to the DC power flow equations. Each edge has a capacity limit that is initially not exceeded. A cascade of failures is triggered by an external disturbance, leading to the failure of an edge. If the failure leads to network disconnection, the demand-generation balance is restored in each connected component by proportionally decreasing whichever resource is in abundance. Consequently, the flow from the failed edge is redistributed. The edge with the largest \textit{relative exceedance} (if any) -- the absolute ratio of flow and capacity -- fails next and the flow is redistributed again.  This process continues until there are no more flows on edges that exceed the edge capacity, marking the end of the cascading failure process.

The main object of analysis in \cite{Nesti_2020} is the total cascade size $S$. It is defined as the total demand lost due to balance restoration. Specifically,
\[S := \sum_{i = 1}^n \left(d_i - d_i^{(\text{end})}\right),\]
where $d_i^{(\text{end})}$ is the demand of node $i$ after the cascade stops. The authors show that, under certain assumptions, the total cascade size inherits the Pareto tail from the demand distribution, i.e., Equation \eqref{totalFailureSize} holds whenever network disconnection occurs. In the proof, the authors rely on the principle of a single big jump; they show that a large cascade is predominantly caused by 
scenarios where one node has a significantly larger demand, compared to other nodes. Specifically, for every $\varepsilon>0$,
\begin{equation}\mathbb{P}(S>x)\approx \mathbb{P}\left(S>x; \sum_{\substack{X_i\neq M, \\i \in \{1,\dots,n\}}}X_i< \varepsilon M\right),\label{SBJ}\end{equation}
with $M = \max\{X_1,\dots,X_n\}$ and $x>0$ large. Furthermore, the authors show that the cascade sequence remains the same for all $\varepsilon\geq 0$ sufficiently small. As a consequence, to obtain the tail behavior of $S$, it is sufficient to understand the total cascade size in the limiting case of $\varepsilon = 0$, which is exploited in the proof.

 The results in \cite{Nesti_2020} rely on two assumptions, ensuring that the cascade is well-defined. Our first contribution is showing that these assumptions are not merely technical but essential, as not all graphs satisfy them. More specifically, Theorem IV.12 in \cite{Nesti_2020} requires a unique maximizer of the exceedance at every step of the cascade. In Lemma \ref{conditions}, given in Section \ref{emergency}, we derive necessary and sufficient conditions that characterize this uniqueness. Then, based on this insight, in Example \ref{example}, we construct a graph for which there are non-unique maximizers during the cascade. This raises the need for an ``exceedance tie-breaking rule'', specifying which edge should fail when a non-unique maximizer is encountered. With such a rule in place, the cascade of failures is well-defined for any simple, connected graph.

After describing the model in Section \ref{model}, we give in Section~\ref{mainResults} our second and main contribution, Theorem \ref{mainThm}. The theorem states that for a simple, connected graph $\mathcal{G}$ with Pareto-tailed demand vector $d = (d_1,\dots, d_n)$, if a cascade leads to network disconnection, then the total failure size has a Pareto tail (see Equation~\eqref{totalFailureSize}). Note that Theorem IV.12 of \cite{Nesti_2020} is a special case of our theorem, which we show in Lemma \ref{equivThm} in Section \ref{impact}, where we also discuss the impact of the "exceedance tie-breaking rule" on our results. 
To prove the theorem, in the first step, we rely on the principle of a single big jump, as given in Equation \eqref{SBJ}, to show that we can limit our analysis to scenarios when one node has a large demand and the sum of other demands is bounded. Specifically, we consider all demands satisfying $\sum_{i\neq j} d_i<\varepsilon d_j$ for fixed $\varepsilon>0$ and $j\in \{1,\dots,n\}$. Observe that even for a fixed node $j$ and $\varepsilon>0$, there are infinitely many demand vectors that satisfy this constraint. This is where our proof strategy diverges from \cite{Nesti_2020}, where the authors solely consider the case of $\varepsilon = 0$. The next step is to obtain the optimal generation vector $g^*(d) = (g_1^*,\dots,g_n^*)$, given as the solution of the DC-OPF problem with input $d$ (see  Equation \eqref{DC-OPF}). To the best of our knowledge, the closed-form solution for $g^*(d)$ is not known. Hence, in Lemma \ref{AltOptSol} we characterize the optimal solution $g^*(d)$, showing it is a projection onto a facet of a well-defined convex polytope $\mathcal{F}_d$, given by the feasible region of the DC-OPF problem. Exploiting this characterization, in Proposition \ref{propProjection} we demonstrate that the demands can be partitioned into a \textit{finite} number of subsets, each corresponding to a projection onto a different facet of $\mathcal{F}_d$. Consequently, in Proposition \ref{propositionKKT}, we show that the cascade sequence does not depend on $\varepsilon$ in the neighborhood of 0, which is a technical, yet crucial result required in the proof of Theorem \ref{mainThm}. As a result, all demands in a given subset induce a fixed cascade sequence, which we show in Lemma~\ref{cascadeEpsilon}. Finally, we prove that $S$ exhibits a scale-free tail behavior by conditioning on each of the finitely many cascading sequences and exploiting the independence with respect to $\varepsilon$. All major results are proven in Section \ref{proofs} and proofs of auxiliary results are given in Appendix \ref{appendixProofs}. 

\section{Model and notation} \label{model}

In this section, we describe the model of power networks used in this paper. The basic element of the model is a graph with nodes and edges, where each node has an exogenously given demand. In addition, each node may generate power, which together with the demand, induces power flow through the edges of the graph. Edge flow limits, optimal power generation, and the cascading failure process are determined sequentially in three stages:
\begin{enumerate}
    \item Planning stage. In this stage, we determine \textit{operational} edge flow limits following a basic network design process. First, given nodal demands, we determine nodal \textit{planning} generations that yield the minimal generation costs, assuming that there are no power flow limitations. Then, we choose \textit{planning} edge capacities equal to the \textit{planning} flow, induced by the exogenous demand and planning generation. Finally, we set the \textit{operational} edge flow limits as a fraction $\lambda\in (0,1)$ of the \textit{planning} edge capacities, ensuring that the system can withstand some unexpected flow changes.
    \item Operational stage. This stage describes the standard operation of the system, i.e., before the cascade. First, given the graph and all demands, the optimal nodal generations are determined by minimizing generation costs subject to operational edge flow limits. Given these generations, power is distributed through the edges to meet the demand at each node. By construction, the power flow on the edges does not exceed the operational edge flow limits, hence the system is stable.
    \item Emergency stage. In this stage, the stability of the system is disrupted by a random edge failure, which may induce a cascade of failures in the system. We compute the cascade sequence iteratively, by redistributing the power after each edge failure and consequently identifying the edge with the biggest strain, i.e. the next edge failure, if any.

\end{enumerate}
Given this broad overview of the mechanics of the model, we now proceed to describe the model in detail.
\subsection*{Network characteristics}
As a part of the network characteristics, we need to describe the network, the demands, and the mechanics of the power flow in the system. Regarding the network, consider a simple, connected, directed graph $\mathcal{G} = (\mathcal{N},\mathcal{E})$ with $n$ nodes and $m$ edges, represented by the incidence matrix $C\in \mathbb{R}^{m\times n}$. In particular,
\[C_{e,i} = \begin{cases}
    1 & \text{if edge $e$ enters node $i$},\\ 
    -1 & \text{if edge $e$ leaves node $i$},\\ 
    0 & \text{otherwise}.\\ 
\end{cases}\]
Furthermore, let $\mathcal{E}_i$ denote the set of edges adjacent to node $i$, namely $\mathcal{E}_i := \{e\in \mathcal{E} ~:~ C_{e,i}\neq 0\}$. We emphasize that the direction of the edges is not a property of the underlying network but a modeling choice. As a result, the orientation can be selected in a convenient manner without loss of generality (see Lemma \ref{orientation}). 

Let $d = (d_1,\dots, d_n)\in \mathbb{R}_{\geq 0}^n$ be the demand vector, where $d_i$ denote the demand of node $i\in \mathcal{N}$. The demand $d_i$ is a realization of a random variable $X_i,$ which has a Pareto tail with parameter $\alpha>0$. Specifically, \begin{equation}\mathbb{P}(X_i>x) \approx Kx^{-\alpha},\label{Kconstant}\end{equation} for constant $K$ and large $x$. Moreover, we assume that $X_i$'s are mutually independent. 


Power flows in the network according to physical laws, following the AC power flow equations \cite{ACflow2016}. However, due to their non-linear nature, typically approximations of these equations are used in practice. In this work, we use a standard approximation, the DC approximation \cite{DCPowerFlowRevisited}, which linearizes the AC power flow equations. This approximation scheme neglects the edge losses due to the assumption that the resistance on edges is much smaller than their reactance, leading to negligible conductance \cite{PTDF2017, Seifi_Sepasian_2011}. Moreover, this approach assumes that the voltage angles are small and the voltage profile is flat. 

To describe the flow on each edge using the DC power flow approximation, we first need to introduce the notion of edge susceptance that captures the physical characteristics of edges in the network, and the Power Transfer Distribution Factors (PTDF) matrix $V$. To this end, let $b_e$ denote the susceptance of edge $e$ and define a susceptance matrix $B\in \mathbb{R}^{m\times m}$ as $B:=\text{diag}(b_1,\dots,b_m)$. The matrix $V$ is uniquely determined by the network and is given by $V:=BC(C^TBC)^+$, where~$+$ denotes the matrix pseudo-inverse \cite{Nesti_2020}. Given this, the edge flow vector $f\in \mathbb{R}^m$ on graph $\mathcal{G}$ with demand $d$ and generation $g$ is given by \begin{equation}f:= V(d - g)\label{flow}.\end{equation} In this paper, for simplicity, we assume $B = I$, i.e., we assume identical edges. 
\subsection*{DC-OPF formulation}
The DC-OPF model is used in both \textit{planning} and \textit{operational} stages in order to determine the operational flow limit vector $\bar{f}\in \mathbb{R}^m$ and the optimal generation vector $g^*(d)\in \mathbb{R}^n_{\geq 0}$. Hence, we first introduce the DC-OPF program \cite{DCformulation} with a quadratic objective function given below. Recall that $d_i$ and $g_i$ denote the demand and generation of node $i\in \mathcal{N}$, respectively.
\begin{subequations}
 \label{DC-OPF}
\begin{gather}
\min_{g\in \mathbb{R}^n_{\geq 0}} \frac{1}{2}\sum_{i = 1}^n g_i^2:\label{obj}\\
\sum_{i = 1}^n g_i = \sum_{i = 1}^nd_i, \label{balance}\\
-\bar{f}\leq V(d - g)\leq \bar{f}\label{edgeConstraints}.\end{gather}
\end{subequations}
 Observe that this version of DC-OPF is adjusted in the sense that it does not account for generation limits that are typically given as an additional condition. Note that Condition \eqref{balance} ensures the balance between the total generation and demand in the network and Condition \eqref{edgeConstraints} ensures that the flows do not exceed these operational limits. Note that in this formulation, we allow for negative flows; the negative sign indicates that the flow is opposite to the initially chosen direction.
\subsection*{Planning, operational and emergency stage}
We now move on to the description of the stages of the model, beginning with the \textit{planning} stage. It is crucial to note that in order to choose a suitable vector of operational edge limits $\bar{f}$, Problem $\eqref{DC-OPF}$ is solved without taking the edge capacity condition into account, i.e. Condition \eqref{edgeConstraints} is disregarded. This results in the optimal planning generation vector denoted by $g^{(pl)}$ which is unique and equal to $g^{(pl)} = \bar{d}e$ \cite{Nesti_2020}, where $e = (1,\dots,1)\in \mathbb{R}^n$ is the all-ones vector and $\bar{d}:= \frac{1}{n}\sum_{i = 1}^nd_i$ is the average demand per node. Moreover, the corresponding planning flow vector is given by $f^{(pl)} = V(d - g^{(pl)})$. Then, we set the \textit{operational} edge limits to $\bar{f} = \lambda |f^{(pl)}|$, where $\lambda\in (0,1)$ represents a safety tuning parameter called the loading factor. Notably, since $\text{Ker}(V) = \langle e\rangle$ \cite{Nesti_2020}, we deduce that the planning flow vector is given by $f^{(pl)} = Vd$ and the operational edge limits become $\bar{f} = \lambda|Vd|$. This marks the end of the \textit{planning} stage.

In the next stage of the problem, i.e. the \textit{operational} stage, we determine the operational generation vector $g^*(d) = (g^*_1,\dots,g^*_n)\in \mathbb{R}^n_{\geq 0}$. The generation vector $g^*(d)$ is a function of the demand vector $d$ and is the unique solution to the DC-OPF problem \eqref{DC-OPF}. Hence, in the operational stage, Program \eqref{DC-OPF} is solved again, now with Condition \eqref{edgeConstraints} in place, which yields $g^*(d)$. With the exogenous demand $d$ and generation $g^*(d)$ the system operates in equilibrium until an external disturbance occurs, leading to an edge failure and initiating the \textit{emergency} stage.

At the start of the \textit{emergency} stage, an edge fails uniformly at random, resulting in a modified graph $\mathcal{G}^{(2)} = (\mathcal{N}^{(2)},\mathcal{E}^{(2)})$ with corresponding PTDF matrix $V^{(2)}$. This may result in an adapted demand $d^{(2)}$ and generation $g^{(2)}$. Specifically, the demand and generation may be modified if the removal of the failed edge leads to a disconnected graph. This can cause an imbalance of generation and demand in the connected components. In such a case, we restore it by proportionally reducing either demand or generation in each node within each component according to Equation \eqref{balanceRestoration} given in the next paragraph with input $r = 1$, $d^{(1)} = d$, and $g^{(1)} = g^*(d)$. In the modified graph, the flow vector is given by $f^{(2)} = V^{(2)}(d^{(2)} - g^{(2)})$. 

In order to determine which edge will break next, we define the \textit{emergency} edge limit \begin{equation}F:= \frac{\lambda^*}{\lambda}\bar{f}\label{emergencyLimit}\end{equation} for some $\lambda^*\geq \lambda$. If $\lambda^* = 1$, we can interpret the emergency limit $F$ as the true capacity limit of an edge, while the operational limit $\bar{f}$ is an additionally imposed safety feature ensuring that the system does not operate at its full capacity in the operational stage. To determine the next edge to break, we look at the ratio of edge flow and the emergency edge limit. Specifically, if any component of $f^{(2)}$ exceeds its corresponding component in $F$, i.e. the flow exceeds the emergency edge limit for some edge, the cascade effect continues. The next edge to break is the one with the maximum relative exceedance $\psi_e^{(2)}:=\left|f^{(2)}_e\right|/F_e$, with edge $e$ determined as  
\begin{equation}\argmax_{e\in \mathcal{E}^{(2)}}\{\psi_e^{(2)}:\psi_e^{(2)}> 1\}.\label{psi}\end{equation}
The subsequent steps of the cascade occur in the same manner. Suppose that in the $r$-th step of the cascade, $r\in \mathbb{N}$, an edge has just failed. This results in graph $\mathcal{G}^{(r+1)} =(\mathcal{N}^{(r+1)},\mathcal{E}^{(r+1)})$ with the PTDF matrix $V^{(r+1)}$. The demand vector $d^{(r+1)}$ and the generation vector $g^{(r+1)}$ are obtained from $d^{(r)}$ and $g^{(r)}$ through the process of balance restoration. Specifically, for a connected component $\mathcal{C} = (\mathcal{N}_\mathcal{C}, \mathcal{E}_\mathcal{C})$ of $\mathcal{G}^{(r+1)}$, let $\theta^{(r+1)} = \sum_{i\in \mathcal{N}_\mathcal{C}} d_i^{(r)}/\sum_{i\in \mathcal{N}_\mathcal{C}} g_i^{(r)}$ denote the ratio of total demand and generation in this component. Then, for $i\in \mathcal{N}_\mathcal{C}$,
\begin{equation} \label{balanceRestoration} \left(d^{(r+1)}_i,~ g^{(r+1)}_i\right) = \begin{cases}
 \left(\frac{1}{\theta^{(r+1)}} d^{(r)}_i,~ g^{(r)}_i \right)&\text{if }\theta^{(r+1)} \geq 1,\\
\left(d^{(r)}_i,~\theta^{(r+1)}g^{(r)}_i\right) &\text{if }\theta^{(r+1)} \leq 1.
\end{cases}\end{equation} This process ensures that if there is excess of demand in the component, i.e., $\theta^{(r+1)}>1$, the nodal demands are lowered by the factor $1/\theta^{(r+1)}$, restoring the balance of total generation and demand. Otherwise, the nodal generations are lowered in a corresponding manner. Lastly, the flow vector $f^{(r+1)}$ is given by $V^{(r+1)}(d^{(r+1)} - g^{(r+1)})$. If the cascade continues, the next edge to break is the maximizer of the relative exceedance $\psi_e^{(r+1)}:= |f_e^{(r+1)}|/F_e$ with $e$ determined similarly as in \eqref{psi}. 


    In the model by Nesti et al.\,\cite{Nesti_2020}, all graphs displaying non-uniqueness of flow exceedance maximizers are excluded from the analysis. Therefore, in order to analyze all simple, connected graphs, we must define how the cascade sequence should proceed when a non-uniqueness of a maximizer is encountered. More specifically, we introduce a fixed ``tie-breaking rule'' that determines which edge(s) should break next in the event of a non-unique exceedance maximizer. One commonly used tie-breaking rule in cascade models is to simultaneously break all edges that maximize the exceedance during a single cascade step, given that the exceedance is greater than 1. Alternatively, we may choose the next edge to fail based on the labeling of the maximizes, such as always breaking the edge with the smallest label. We refrain from imposing any specific tie-breaking rule, but we do require that the ties are resolved according to some deterministic tie-breaking rule $T$, defined as follows.
    
    \begin{definition} \label{tie-breakingDef}
        A deterministic tie-breaking rule $T$ is a function that takes as input a subset of edges $\mathcal{M}\subseteq \mathcal{E}$, representing the set of maximizers of the relative exceedance during a single cascade step. It outputs a non-empty subset $\overline{\mathcal{M}}\subseteq \mathcal{M}$, representing the set of edges that break during the current step of the cascade. Formally, $T: 2^{\mathcal{E}}\rightarrow 2^{\mathcal{E}}$ such that $T(\mathcal{M})\neq \emptyset$ is the set of edges selected to fail. Note that if $|\mathcal{M}| =1$, meaning that there is only one maximizer, then $\overline{\mathcal{M}} = \mathcal{M}$. 
    \end{definition}
\noindent Notably, this definition of a tie-breaking rule excludes any randomized approach, such as breaking one of the maximizer edges uniformly at random. Nevertheless, it is worth mentioning that the results presented in this paper can be extended in a straightforward manner to accommodate randomized tie-breaking rules as well. 

The cascade stops if there are no more edges exceeding the emergency limit. The resulting graph and demand vector are denoted respectively by $\mathcal{G}^{(\text{end})}$ and $d^{(\text{end})}$. The total failure size $S$ is defined as the total demand lost during the process of balance restoration, i.e. $S = \sum_{i = 1}^n d_i - d^{(\text{end})}_i$. If the cascade leads to no network disconnection, then $S = 0$.
\subsection*{Additional assumptions and notation}
With the model fully specified, we proceed to describe additional notation and assumptions used throughout this paper. Next, let $X := (X_1,X_2,\dots, X_n)$ be a vector of Pareto-tailed demands with parameter $\alpha$. We define $Y$ as the reordering of $X$ such that the maximum demand is shifted to the first position; that is, 
\begin{equation} \label{Y}Y = (X_{i}, X_1,X_2,\dots,X_{i-1},X_{i+1},\dots,X_n), \text{ with } i:= \argmax_{i = 1,\dots,n}\{X_i\}.\end{equation}
This notation ensures that the node with the largest demand always has label $1$ in $Y$. Furthermore, in our analysis, we typically focus on specific normalized demand realizations $d(\varepsilon,\gamma)$ given by
\begin{equation} \label{genericD} d(\varepsilon,\gamma) := e_1 + \varepsilon\gamma, \text{ for some } \varepsilon>0 \text{ and } \gamma\in \Gamma,\end{equation} 
where $\Gamma := \{\gamma = (\gamma_1,\dots,\gamma_n) ~|~ \gamma_1 = 0, \gamma\geq 0, e^T\gamma = 1\}$ and   $e_1 = (1,0,\dots,0) \in \mathbb{R}^n$. This particular choice is motivated by the principle of a single big jump, as explained in the introduction. Namely, this choice assumes that the first node has a significantly larger demand, compared to other nodes. Note that this is a normalized demand profile, as it ensures that the largest nodal demand is equal to 1. In other words, $d(\varepsilon,\gamma)$ corresponds to a realization $y = (y_1,\dots, y_n)$ of $Y$ scaled by $y_1$, where $\frac{1}{y_1}(0,y_2,\dots,y_n) = \varepsilon \gamma $ for some vector $\gamma\in \Gamma$. Here, $\gamma$ represents the ratios between all but the first component of $y$, and $\varepsilon$ represents the proportion of demand between the node with the largest demand and all other nodes. 

For convenience of exposition and calculations, it is suitable to choose a specific orientation of the edges. In what follows, we choose the orientation induced by the following lemma.

\begin{lemma} \label{orientation}
Given a graph $\mathcal{G}$ and $\gamma\in \Gamma$, it is possible to adjust the orientation of edges in the graph so that $Vd \geq 0$ for all $\varepsilon\geq 0$ sufficiently small, where $d = e_1 + \varepsilon \gamma$.
\end{lemma}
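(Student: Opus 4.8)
The plan is to exploit the fact that $Vd$ is, up to the flat kernel direction $e$, the vector of net injections $d - g^{(pl)}$ pushed through the network, and that for $\varepsilon = 0$ the demand profile $d = e_1$ is a single unit source at node $1$ against a uniform sink $\bar{d}e = \tfrac1n e$. First I would observe that by linearity $Vd = V e_1 + \varepsilon V\gamma$, so it suffices to (i) choose an orientation making $Ve_1 \ge 0$ componentwise, and then (ii) argue that the perturbation term $\varepsilon V\gamma$ cannot destroy nonnegativity on the \emph{support} of $Ve_1$, while on edges where $(Ve_1)_e = 0$ we need a separate argument.

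For step (i), recall $V = C(C^TC)^+$ (since $B = I$) and $\mathrm{Ker}(V) = \langle e\rangle$, so $Ve_1 = V(e_1 - \tfrac1n e) = C(C^TC)^+(e_1 - \tfrac1n e)$. The vector $\phi := (C^TC)^+(e_1 - \tfrac1n e)$ is the solution of the graph Laplacian system $L\phi = e_1 - \tfrac1n e$ with $e^T\phi = 0$; this is exactly the (scaled) electrical potential when unit current is injected at node $1$ and extracted uniformly. Then $(Ve_1)_e = \phi_{u} - \phi_{v}$ for an edge $e$ oriented from $v$ to $u$. A standard fact about such harmonic-type potentials is that node $1$ is the unique strict maximizer of $\phi$: at every other node the discrete Laplacian condition forces $\phi_i$ to be a weighted average of its neighbours minus the (nonnegative) extraction $\tfrac1n$, so no interior local maximum can occur, and node $1$ dominates. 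I would make this precise via a maximum-principle argument on $L\phi = e_1 - \tfrac1n e$. Ordering the nodes so that $\phi_{1} > \phi_{i}$ for all $i \neq 1$ — and, crucially, breaking ties among equal potentials consistently (e.g. refine to a total order that is ``monotone'' in $\phi$) — I would orient every edge from the lower-potential endpoint to the higher one. With $B = I$ this yields $(Ve_1)_e = \phi_{\text{head}} - \phi_{\text{tail}} \ge 0$ for every edge, with strict inequality unless both endpoints share the same potential, in which case the component is exactly $0$.

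For step (ii), split the edge set into $E_{>} = \{e : (Ve_1)_e > 0\}$ and $E_{=} = \{e : (Ve_1)_e = 0\}$. On $E_{>}$, since $V\gamma$ is a fixed vector (for fixed $\gamma$) and $(Ve_1)_e > 0$ is bounded away from $0$ on the finite set $E_{>}$, we have $(Vd)_e = (Ve_1)_e + \varepsilon(V\gamma)_e > 0$ for all $\varepsilon$ below a threshold depending only on $\mathcal{G}$ and $\gamma$. The delicate part is $E_{=}$: here $(Vd)_e = \varepsilon(V\gamma)_e$, so we need $(V\gamma)_e \ge 0$ for $e \in E_{=}$. This is where I expect the main obstacle, and the resolution should again be a potential/maximum-principle argument, now applied to the equipotential classes of $\phi$: an edge in $E_{=}$ connects two nodes $u,v$ with $\phi_u = \phi_v$; contracting each equipotential class of $\phi$ and using that $\gamma \ge 0$ with $\gamma_1 = 0$, one shows the secondary potential $\psi := (C^TC)^+(\gamma - \bar\gamma e)$ is itself sub/super-harmonic in the right direction within each class, so that the tie-breaking total order chosen in step (i) — which I would now additionally require to be monotone in $\psi$ within each $\phi$-level set — makes $(V\gamma)_e \ge 0$ on $E_{=}$. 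In other words, the orientation is chosen lexicographically: first by $\phi$, then by $\psi$ within $\phi$-ties, then arbitrarily within joint ties (where $(Vd)_e = 0$ regardless). Putting the two cases together gives $Vd \ge 0$ for all sufficiently small $\varepsilon \ge 0$, and since there are finitely many edges the threshold on $\varepsilon$ is uniform, which completes the proof.
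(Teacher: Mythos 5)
Your final construction is correct --- orienting each edge from the endpoint with the smaller value of $(\phi,\psi)$ in lexicographic order to the one with the larger value, where $\phi = L^+e_1$ and $\psi = L^+\gamma$, does give $(Ve_1)_e\geq 0$ everywhere and $(V\gamma)_e\geq 0$ on the set where $(Ve_1)_e = 0$, which is all the lemma needs. But the route you take is far heavier than the paper's, and one intermediate step you lean on is both unnecessary and unproven. The paper's proof is essentially two lines: flipping the orientation of edge $i$ flips only the sign of the $i$-th row of $V$ and leaves all other rows untouched, so each component of $Vd$ can be fixed \emph{independently}; first flip edges so that $Ve_1\geq 0$, note that components with $(Ve_1)_i>0$ survive the perturbation $\varepsilon(V\gamma)_i$ for $\varepsilon$ small, and for components with $(Ve_1)_i=0$ flip the edge whenever $(V\gamma)_i<0$ --- this flip preserves $(Ve_1)_i=0$. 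Your proposal obscures this triviality in three ways. First, the maximum-principle discussion showing node $1$ uniquely maximizes $\phi$ is irrelevant: you never use where the maximum sits, only that $(Ve_1)_e = \phi_{\mathrm{head}}-\phi_{\mathrm{tail}}\geq 0$ under your orientation, which holds by construction. Second, your worry about choosing a \emph{consistent} total order on nodes is a phantom constraint: orientations of distinct edges are independent, so no global consistency is required. Third, and most substantively, your treatment of $E_{=}$ claims that one must show $\psi$ is ``sub/super-harmonic in the right direction within each equipotential class''; this claim is never justified and is not needed --- since $(V\gamma)_e = \psi_{\mathrm{head}}-\psi_{\mathrm{tail}}$ exactly, orienting ties by $\psi$ gives $(V\gamma)_e\geq 0$ immediately, with no harmonicity input. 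So the proposal reaches the right answer, but the place you flag as ``the main obstacle'' contains no obstacle at all, and the one lemma you say ``one shows'' is dead weight that, had it actually been required, you would not have established.
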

\begin{proof}
Let $i \in \mathcal{E}$ and let $d$ be a demand vector as given in \eqref{genericD}. Suppose that $(Ve_1)_i>0$. Then,  $(Vd)_i = (Ve_1)_i + \varepsilon(V\gamma)_i$. For $\varepsilon < \frac{(Ve_1)_i}{|(V\gamma)_i|}$, we observe that $\varepsilon|(V\gamma)_i|<(Ve_1)_i$. Therefore, for this choice of $\varepsilon$, we find that $(Vd)_i\geq 0$.

Now, suppose that $(Ve_1)_i=0$ and $(Vd)_i<0$. Then, we swap the orientation of edge $i$. This operation results in swapping signs in the $i-$th row of and $V$, and it does not affect the other edges. Since $(Ve_1)_i = 0$, it remains $0$ after swapping, and now $(Vd)_i>0$. Both cases together imply that we can assume $Ve_1\geq 0$ and $Vd \geq 0$, given $
\varepsilon$ sufficiently small. This completes the proof.\end{proof}
\noindent Note that we can assume the orientation induced by Lemma \ref{orientation} without loss of generality because the orientation of edges influences only the sign of the flow, which does not influence the cascade of failures.

In our analysis, the optimal generation $g^*(d)$ will be characterized as a projection onto the boundary of the feasible region of the DC-OPF problem. To this end, let $\mathcal{F}_d$ denote the feasible region defined by the Conditions \eqref{balance} and \eqref{edgeConstraints} of the DC-OPF problem:
\[\mathcal{F}_d:= \{g\in \mathbb{R}_{\geq0}^{n}:e^Tg = e^Td,~~ -\lambda|Vd|\leq V(d - g)\leq \lambda |Vd|\}.\]
 Under the orientation of edges induced by Lemma \ref{orientation}, the inequality constraints of the DC-OPF problem simplify to $(1-\lambda)Vd\leq Vg\leq (1+\lambda)Vd$. Therefore, $\mathcal{F}_d$ can be written as
 \begin{equation}\mathcal{F}_d= \{g\in \mathbb{R}_{\geq0}^{n}: e^Tg = e^Td, ~~ (1-\lambda)Vd\leq Vg\leq (1+\lambda )Vd\}.\label{Fd}\end{equation}
Observe that $\mathcal{F}_d$ is constrained by $2m+1$ hyperplanes, which are given by
\[H_0:= \{g\in \mathbb{R}^n: e^Tg = e^Td\},\]
\[H_i:= \{g\in \mathbb{R}^n: (1-\lambda)(Vd)_i = (Vg)_i\},~~ i = 1,\dots,m,\]
\[H_j:= \{g\in \mathbb{R}^n: (1+\lambda)(Vd)_{j-m} = (Vg)_{j-m}\},~~ j = m+1,\dots,2m.\]
Next, let $\mathcal{I}:=2^{\{0,\dots, 2m\}}$ be the power set of all hyperplane indices and let the hyperplane index set $I^*(d)\in \mathcal{I}$ be the subset of indices of hyperplanes containing the optimal solution $g^*(d)$. Specifically, 
\begin{equation}\label{Istar}
    I^\ast(d):= \{i\in \{0,\dots,2m\} : g^*(d)\in H_i\}.
\end{equation}We now define the affine subspace $S_{I^*(d)}$ where the optimal solution $g^*(d)$ lies, which is given by
\begin{equation*}S_{I^\ast(d)}:=\bigcap_{i\in I^*(d)} H_i.\end{equation*}
Observe that as we show in the proof of Lemma \ref{lemmaProjection}, $g^*(d)$ is a unique point on the boundary of $\mathcal{F}_d$ and therefore $S_{I^*(d)}$ is non-empty.
Later on in the proofs of our results, we use an alternative characterization of $g^*(d)$ through a projection matrix $A_{I^*(d)}$ defined in Lemma \ref{lemmaProjection}, where we show that $$g^*(d) = A_{I^*(d)}d.$$ Intuitively, $A_{I^*(d)}$ projects the demand onto the affine space $S_{I^*(d)}$. This characterization allows us to exploit linear properties of $g^*(d)$. Moreover, in Proposition \ref{propositionKKT}, we show that for all $\varepsilon>0 $ small, $I^*(d)$ depends on $d$ only through $\gamma$. In particular, there exists a mapping $\phi: \Gamma\rightarrow \mathcal{I}$ such that $I^*(d) = \phi(\gamma)$ for all $\varepsilon>0$ small. Note that we use the notation $I^*\in \mathcal{I}$ when we refer to some element of $\mathcal{I}$ and not the particular element corresponding to demand $d$. Observe that since $\mathcal{I}$ is a finite set, there are finitely many matrices $A_{I^\ast}$ with $I^*\in \mathcal{I}$.

Next, we give additional notation used in the analysis of the total cascade size. Let the random variable $N\in \mathcal{N}$ be the label of the node with the largest demand and let $L^{(1)}\in \mathcal{E}$ be the random variable denoting the edge that failed first. Since all nodal demands are i.i.d., each node has the same probability of having the largest demand; i.e., $\mathbb{P}(N = i) = \frac{1}{n}$ for all $i\in \mathcal{N}$. Moreover, the first edge failure is chosen uniformly at random; hence, $\mathbb{P}(L^{(1)} = l) = \frac{1}{m}$ for all $l\in \mathcal{E}$. Furthermore, let $Z(i,l,\gamma)$ be the number of nodes disconnected from node $N = i$ after the completion of the cascade that was initiated by the failure of edge $L^{(1)} = l$ and initial demand $d(\varepsilon,\gamma)$. In our analysis, we show that $Z(i,l,\gamma)$ is well defined for all $\varepsilon>0$ sufficiently small, as in this setting the cascade depends on $d(\varepsilon,\gamma)$ only through $\gamma$. Note that $Z(i,l,\gamma)$ takes values in $\mathcal{Z}:=\{0,\dots, n-1\}$. With this, for a given node with largest demand $i\in \mathcal{N}$, first edge failure $l\in \mathcal{E}$, number of disconnected nodes $z\in \mathcal{Z}$, and hyperplane index set $I^*\in \mathcal{I}$, we define $\Gamma(i,l,z,I^*)$ as the subset of $\gamma\in \Gamma$, for which the hyperplane index set is equal to $\phi(\gamma)= I^*$ and the number of disconnected nodes is equal to $Z(i,l,\gamma) = z$ in the neighborhood of $\varepsilon = 0$. Specifically, 
\begin{equation*}\Gamma(i,l,z,I^\ast) = \{\gamma\in \Gamma ~:~ \phi(\gamma) = I^*, ~~Z(i,l,\gamma)  = z\}.\end{equation*}
What we show next is that subsets $\Gamma(i,l,z,I^\ast)$ partition $\Gamma$. We show in Lemma \ref{cascadeEpsilon} that for a given node $N = i$ and an edge $L^{(1)} = l$, $\gamma$ uniquely defines the number of disconnected nodes $Z(i,l,\gamma) = z$. Further, recall that $\gamma$ uniquely defines the hyperplane index set $I^*$ through $\phi$. As a consequence, for a given $N = i$ and $L^{(1)} = l$, the set $\Gamma$ has the following partition $$\Gamma = \bigcup_{z\in \mathcal{Z}, I^\ast\in \mathcal{I}} \Gamma(i,l,z,I^\ast).$$
Note that the partition is finite because $\mathcal{Z}$ and $\mathcal{I}$ are finite sets.

 Next, we divide possible realizations of the random variable $Y$ (see Equation \eqref{Y}) into subsets $Y(i,l,z,I^*)$. Within each subset, the ratios between components of $Y$ correspond to the ratios $\gamma\in \Gamma(i,l,z,I^*)$. Specifically, \begin{equation}Y(i,l,z,I^\ast):= \{Y~|~ Y_3 = \frac{\gamma_3}{\gamma_2}Y_2, \dots, Y_n = \frac{\gamma_n}{\gamma_2}Y_2, \text{ for some } \gamma = (0,\dots,\gamma_n)\in \Gamma(i,l,z,I^\ast)\}.\label{demandsPartition}\end{equation}
Observe that the set $\Gamma$ denotes all possible ratios between components $i = 2,\dots,n$ of an vector in $\mathbb{R}^n_{\geq 0}$. Hence, for a given $i\in \mathcal{N}$ and $l\in \mathcal{E}$, the subsets $Y(i,l,z,I^*)$ form a finite partition of all possible realizations of $Y$. We exploit this partition in the analysis of the total cascade size by conditioning on the finitely many events $\{Y\in Y(i,l,z,I^*)\}$.

Lastly, we define the $\mathcal{O}$ notation. Consider arbitrary functions $f(x)$ and $g(x)$. Then, $f(x) = \mathcal{O}(g(x))$ as $x\rightarrow \infty$ means that there exists $M> 0$ and $x^\ast\geq 0$ such that $|f(x)| \leq Mg(x)$ for all $x\geq x^\ast$. 

We conclude by summarizing the notation used in this paper in Table \ref{tab:my_label}.

\begin{table}[p]
    \centering
    \resizebox{1\linewidth}{!}{
    \begin{tabular}{|c|l|}
         \hline 
         \textbf{Variable}& \textbf{Description}  \\
         \hline
                 $\mathbb{R}_{\geq0}$ & Set of non-negative real numbers, $\mathbb{R}_{\geq0}:=\{x\in \mathbb{R}~|~x\geq 0\}$.\\
         \hline
         $\mathcal{G} = (\mathcal{N},\mathcal{E})$& Directed graph  of the underlying network with node set $\mathcal{N}$ and edge set $\mathcal{E}$.\\
         \hline
         $\mathcal{E}_i$ & Set of edges adjacent to (either entering or leaving) node $i$.\\
         \hline
         $n, m$ & Number of nodes and edges in graph $\mathcal{G}$, respectively, that is, $n = |\mathcal{N}|$ and $m = |\mathcal{E}|$.\\
                                         \hline
        $C$ & Incidence matrix of $\mathcal{G}$, $C\in\mathbb{R}^{m\times n}$.\\
                                        \hline
        $L$ & Laplacian matrix of $\mathcal{G}$, $L\in\mathbb{R}^{n\times n}$. Specifically, $L = C^TC.$\\
                        \hline
        $V$ & Power Transfer Distribution Factors (PTDF) matrix of $\mathcal{G}$, $V\in \mathbb{R}^{m\times n}$. Specifically, $V = CL^+.$\\
         \hline
        $e$ & Vector of ones of appropriate size, $e = (1,\dots,1)^T$.\\
        \hline
        $e_i$ & $i^{th}$ unit vector of appropriate size.\\
                \hline
         $d,g^\ast(d)$ & Initial vectors of nodal demands and generations, respectively, with  $d, g^\ast(d)\in \mathbb{R}_{\geq0}^{n}$.\\
         \hline 
         $\Gamma$ & Set of all ratios $\gamma$, i.e. $\Gamma := \{\gamma = (\gamma_1,\dots,\gamma_n) ~|~ \gamma_1 = 0, \gamma\geq 0, e^T\gamma = 1\}$.\\
         \hline 
         $d(\varepsilon,\gamma)$ & Particular demand profile $d(\varepsilon,\gamma) := e_1 + \varepsilon\gamma, \text{ for some } \varepsilon>0 \text{ and } \gamma\in \Gamma$.\\
         \hline
         $\bar{d}$ & Average demand per node, $\bar{d} = \frac{1}{n}\sum_{i = 1}^n d_i$. \\
         \hline
         $\lambda$ & Operational safety parameter, $\lambda\in(0,1)$.\\
         \hline
                  $f$ &Vector of edge flows on $\mathcal{G}$ with demand $d$ and generation $g^\ast(d)$, $f\in \mathbb{R}^{m}$. Specifically, $f = V(d - g^\ast(d))$.\\
                           \hline
                  $\bar{f}$ & Vector of operational edge limits on $\mathcal{G}$, $\bar{f}\in \mathbb{R}^{m}$. Specifically, $\bar{f} = \lambda Vd.$\\
                  \hline
                                    $F$ & Vector of emergency edge limits on $\mathcal{G}$, $F\in \mathbb{R}^{m}$. Specifically, $F = \frac{\lambda^*}{\lambda}Vd$ for some $\lambda^*\geq \lambda$.\\

         \hline
         $\mathcal{F}_d$ & Feasible region of the DC-OPF optimization problem; see \eqref{DC-OPF}.\\
         \hline
             
    $H_j$ & $j$-th hyperplane constraining polytope $\mathcal{F}_d$.\\
    \hline
$I^*(d)$ & Hyperplane index set for demand $d$.\\
\hline
$S_{I^*(d)}$ & Face of $\mathcal{F}_d$ containing $g^*(d)$.\\
\hline
$A_{I^*(d)}$& Matrix characterizing the optimal generation, i.e. $g^*(d) = A_{I^*(d)} d$.\\
\hline
$\phi(\gamma)$& Hyperplane index set for $\gamma$ around $\varepsilon = 0$, that is, $\phi(\gamma) = I^*(d)$ for all $\varepsilon>0$ small.\\
\hline

        $\mathcal{G}^{(r+1)}$ & Directed graph of the network after $r$ steps of the cascade with node set $\mathcal{N}^{(r+1)}$ and edge set $\mathcal{E}^{(r+1)}$. \\
        \hline
        $\mathcal{G}^{(\text{end})}$ & Directed graph of the network at the end of the cascade. \\
        \hline
         $d^{(r+1)}$ & Vector of nodal demands after $r$ steps of the cascade, $d^{(r+1)}\in \mathbb{R}^{n}_{\geq 0}$, $d^{(1)}=d$.\\
         \hline
         $d^{(\text{end})}$ & Vector of nodal demands at the end of the cascade, $d^{(\text{end})}\in \mathbb{R}^{n}_{\geq 0}$.\\
         \hline
         $g^{(r+1)}(d)$ & Vector of nodal generations after $r$ steps of the cascade, $g^{(r+1)}(d)\in \mathbb{R}^{n}_{\geq 0}$, $g^{(1)}(d) = g^\ast(d)$.
         \\
         \hline
                          $V^{(r+1)}$ & PTDF matrix of $\mathcal{G}$ after $r$ steps of the cascade, $V^{(r+1)}\in \mathbb{R}^{m\times n}$.\\
                          \hline 
                                            $f^{(r+1)}$ &Vector of edge flows on $\mathcal{G}$ after $r$ steps of the cascade, $f^{(r+1)}\in \mathbb{R}^{m}$. \\
                           \hline
                           $\psi^{(r+1)}$ & Vector of relative edge flow exceedances after $r$ steps of the cascade, $\psi^{(r+1)}\in \mathbb{R}^m$.
                          \\
                          \hline
                          $T$ & Deterministic tie-breaking rule; see Definition \ref{tie-breakingDef}.\\
                          \hline
    $X$ & Vector of i.i.d. Pareto distributed random variables with parameter $\alpha>0$, $X\in \mathbb{R}_{\geq0}^{n}$.\\
    \hline
    $Y$ & Reordering of $X$ by shifting the maximum demand to the first position; see \eqref{Y}.\\
    \hline
    $S$ & Total failure size.\\ \hline

    $N$ & Node with the largest demand, $N\in \mathcal{N}$.\\
    \hline
    $L^{(1)}$ & First edge to fail in the cascade, $L^{(1)}\in \mathcal{L}$.\\
    \hline
    
    $\mathcal{I}$ & Power set of $\{0,\dots, 2m\}$. \\
\hline
$Z(i,l,\gamma)$ & Number of nodes disconnected from node $i$ after edge $l$ initiated the cascade with nodal demand $d(\varepsilon,\gamma)$.\\ \hline 
$\mathcal{Z}$ & Set of the number of nodes disconnected from the node with the largest demand, i.e., $\mathcal{Z} = \{0,\dots, n-1\}$.\\
\hline 
$\Gamma(i,l,z,I^*)$ & Subset of $\gamma \in \Gamma$, for which $Z(i,l,\gamma) = z$ and $g^*(d(\varepsilon,\gamma)) = A_{I^*}d(\varepsilon, \gamma)$.\\
\hline
$Y(i,l,z,I^*)$ & Subset of realizations of $Y$ having ratios between its components given by some $\gamma\in \Gamma(i,l,z,I^*)$.\\
                    \hline

    \end{tabular}}
    \vspace{0.2cm}
    \caption{Summary of notation.}
    \label{tab:my_label}
\end{table}}
\section{Main results} \label{mainResults}
In this section, we introduce and prove our main result, Theorem \ref{mainThm}. First, in Section \ref{roadmap}, we state the theorem, provide all results required for its proof, and sketch the steps of the proof, which we fully give in Section \ref{proofThm}. The proofs of the remaining results, unless stated otherwise, are provided in Section \ref{proofs}.
\subsection{Main theorem and roadmap of the proof} \label{roadmap}
Our main result, Theorem \ref{mainThm}, shows that in the case of graph disconnection, the total failure size has a Pareto tail with the same parameter as the demand distribution. More specifically, if under demand profile \eqref{genericD} there is a positive probability of network disconnection, then the total failure size $S$ is Pareto-tailed with parameter $\alpha$. Otherwise, $S$ has a lighter tail. 

\begin{theorem}[Tail of the total cascade size] \label{mainThm}
Let $\mathcal{G} = (\mathcal{N},\mathcal{E})$ and $\lambda\in (0,1)$. If there exist a node $i\in \mathcal{N}$, a first edge failure $l\in \mathcal{E}$, a hyperplane index set $I^\ast\in \mathcal{I}$, and a number of nodes $z\in \mathcal{Z}/\{0\}$ disconnected from node $i$ such that $\mathbb{P}\left(Y\in Y(i,l,z,I^\ast)\right)>0$, then \[\mathbb{P}(S>x)\sim Cx^{-\alpha} \text{ as } x\rightarrow \infty,\]
where  $$C := \sum_{i\in \mathcal{N}}\sum_{l\in\mathcal{E}}\sum_{I^\ast\in\mathcal{I} }\sum_{z\in \mathcal{Z}}\frac{K}{m}\left(\frac{\lambda z}{n}\right)^{\alpha}\mathbb{P}\left( Y\in Y(i,l,z,I^\ast)~|~ N = i, L^{(1)} = l\right)$$
and $K$ is the normalizing constant of a Pareto-tailed random variable as given in \eqref{Kconstant}.
Otherwise, $\mathbb{P}(S>x) = \mathcal{O}(x^{-2\alpha})$.
\end{theorem}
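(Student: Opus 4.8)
The plan is to combine the principle of a single big jump for Pareto (hence subexponential) i.i.d.\ sums with a reduction of the emergency dynamics to finitely many ``types'', and then to evaluate the failure size in the large–demand limit in closed form. The first, routine ingredient is that the whole emergency stage is positively homogeneous: replacing $d$ by $cd$ for $c>0$ sends the feasible polytope $\mathcal{F}_d$ of \eqref{Fd} to $c\,\mathcal{F}_d$, hence (the objective in \eqref{DC-OPF} being strictly convex and quadratic) the optimal generation to $c\,g^\ast(d)$, the flows $f=V(d-g^\ast(d))$ and the limits $\bar f,F$ to $c\,f,\,c\,\bar f,\,c\,F$, whereas the relative exceedances $\psi_e=|f_e|/F_e$ and every balance–restoration ratio $\theta^{(r)}$ in \eqref{balanceRestoration} are scale free; consequently the edge–failure sequence and the number of disconnected nodes are unchanged and $S(cd)=c\,S(d)$. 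On $\{N=i\}$ I would therefore write $d=Y_1\,d(\varepsilon,\gamma)$ with $d(\varepsilon,\gamma)$ as in \eqref{genericD}, $\varepsilon=Y_1^{-1}\sum_{k\ge 2}Y_k$, and $\gamma\in\Gamma$ the ratio vector of $(Y_2,\dots,Y_n)$, so that $S=Y_1\,S(d(\varepsilon,\gamma))$ and, after fixing the orientation of Lemma \ref{orientation}, all objects live in $\mathcal{F}_{d(\varepsilon,\gamma)}$.

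Next I would split $\mathbb{P}(S>x)=\sum_{i\in\mathcal N}\sum_{l\in\mathcal E}\tfrac{1}{nm}\,\mathbb{P}(S>x\mid N=i,L^{(1)}=l)$ and decompose each conditional probability along $G_\delta:=\{\sum_{k\ge 2}Y_k\le\delta Y_1\}$. On $G_\delta^{c}$ one has $S\le\|Y\|_1\le(1+\delta^{-1})\sum_{k\ge 2}Y_k\le(1+\delta^{-1})(n-1)X_{(n-1)}$ with $X_{(n-1)}$ the second largest of the $X_i$, and $\mathbb{P}(X_{(n-1)}>t)=\mathcal O(t^{-2\alpha})$, so the $G_\delta^{c}$–part is $\mathcal O(x^{-2\alpha})$ — which is also the reason for the ``otherwise'' clause, since on $G_\delta$ (with $\delta$ small, see below) no disconnection occurs in that regime and then $S=0$ by \eqref{balanceRestoration}. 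On $G_\delta$ the proportion $\varepsilon$ is small, and here I would invoke the structural results: by Lemmas \ref{AltOptSol} and \ref{lemmaProjection}, $g^\ast(d(\varepsilon,\gamma))=A_{I^\ast(d(\varepsilon,\gamma))}\,d(\varepsilon,\gamma)$ is the projection of the demand onto a face of $\mathcal{F}_{d(\varepsilon,\gamma)}$; by Propositions \ref{propProjection} and \ref{propositionKKT}, $I^\ast(d(\varepsilon,\gamma))=\phi(\gamma)$ for all small $\varepsilon$ and the edge–failure sequence is locally constant in $\varepsilon$ near $0$; and by Lemma \ref{cascadeEpsilon} the number $Z(i,l,\gamma)$ of nodes disconnected from $i$ is then well defined, yielding the finite partitions $\Gamma=\bigcup_{z,I^\ast}\Gamma(i,l,z,I^\ast)$ and $Y=\bigcup_{z,I^\ast}Y(i,l,z,I^\ast)$.

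It then remains to evaluate $\lim_{\varepsilon\downarrow 0}S(d(\varepsilon,\gamma))$ for $\gamma\in\Gamma(i,l,z,I^\ast)$. Solving the limiting program at $d=e_1$: the unconstrained minimiser $\tfrac1n e$ of $\tfrac12\|g\|^2$ on $\{e^Tg=1\}$ has $Vg=0$ and violates the lower flow constraints $Vg\ge(1-\lambda)Ve_1$ (since $Ve_1\ge 0$ and $Ve_1\neq0$ for $n\ge2$); projecting $\tfrac1n e$ onto the affine set $\{g:Vg=(1-\lambda)Ve_1\}$ gives, because $\ker V=\langle e\rangle$, exactly $g^\ast(e_1)=(1-\lambda)e_1+\tfrac{\lambda}{n} e$, which is feasible and hence optimal, so every non–maximal node generates $\tfrac{\lambda}{n}$ in the limit. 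Following \eqref{balanceRestoration}, the component containing $i$ always has demand $\to 1$ and total generation $\le 1$, so its generation is never curtailed and only decreases by the $\tfrac{\lambda}{n}$ carried off by each of the $z$ nodes that leave it, ending at $1-\tfrac{\lambda z}{n}$, which becomes $i$'s curtailed demand, while all other demands tend to $0$; hence $S(d(\varepsilon,\gamma))\to\tfrac{\lambda z}{n}$. Thus on $G_\delta\cap\{Y\in Y(i,l,z,I^\ast)\}$ one has $S=Y_1\bigl(\tfrac{\lambda z}{n}+o(1)\bigr)$; since $Y_1=\max_i X_i$ is independent (by exchangeability) of which index is maximal and $\mathbb{P}(\max_i X_i>t)\sim nK t^{-\alpha}$, and since under the Pareto assumption the magnitude $Y_1$ decouples asymptotically from the ratio vector $\gamma$, a standard sandwich argument (send $x\to\infty$ for fixed $\delta$, then $\delta\to0$) yields $\mathbb{P}(S>x\mid N=i,L^{(1)}=l)\sim x^{-\alpha}\sum_{z\ge1,\,I^\ast}nK\bigl(\tfrac{\lambda z}{n}\bigr)^{\alpha}\mathbb{P}\bigl(Y\in Y(i,l,z,I^\ast)\mid N=i,L^{(1)}=l\bigr)$, the $z=0$ terms dropping out because then $S=0$. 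Summing over $i,l$ with the weights $\tfrac1{nm}$ reproduces $C$, and if no tuple with $z\ge1$ has positive probability this leading term is absent and only the $\mathcal O(x^{-2\alpha})$ remainder survives.

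The hard part is the uniformity in $\gamma$ underlying the last two paragraphs. Proposition \ref{propositionKKT} and Lemma \ref{cascadeEpsilon} pin down the cascade type, and hence the limit $\lim_{\varepsilon\downarrow0}S(d(\varepsilon,\gamma))=\tfrac{\lambda z}{n}$, only for $\varepsilon$ below a threshold $\varepsilon_0(\gamma)$ that could a priori shrink to $0$ as $\gamma$ approaches the boundary of a region $\Gamma(i,l,z,I^\ast)$ — precisely where some exceedance $\psi_e$ or some active KKT set becomes critical in the limit. I would therefore need to show that the residual set $\{\varepsilon<\delta,\ \varepsilon\ge\varepsilon_0(\gamma)\}$ still contributes only $o(x^{-\alpha})$ (e.g.\ via the crude bound $S\le\|Y\|_1$ there, arguing that this set forces $\sum_{k\ge 2}Y_k$ to be of the same order as $Y_1$ and invoking again the $\mathcal O(x^{-2\alpha})$ estimate), and that $\gamma\mapsto\lim_{\varepsilon\downarrow0}S(d(\varepsilon,\gamma))$ and the partition $\{\Gamma(i,l,z,I^\ast)\}$ are measurable, so that the conditional probabilities in $C$ make sense. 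Making this, and the asymptotic decoupling of $Y_1$ from $\gamma$, fully rigorous is the real content; the rest is bookkeeping on top of the structural lemmas.
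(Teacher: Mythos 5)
Your proposal follows essentially the same route as the paper's proof: condition on $(N,L^{(1)})$, discard the regime $\sum_{k\ge 2}Y_k>\delta Y_1$ as $\mathcal{O}(x^{-2\alpha})$ (the paper cites Lemma \ref{BigJumpLemma}, you rederive it from the second order statistic), reduce to the finite partition $Y(i,l,z,I^\ast)$ via Propositions \ref{propProjection} and \ref{propositionKKT} and Lemma \ref{cascadeEpsilon}, evaluate the limiting loss $\lambda z/n$ per cascade type, and sandwich using the asymptotic decoupling of $Y_1$ from $\gamma$ (the paper's Lemma \ref{asymptoticS}). The only substantive differences are minor — you obtain $S(d(\varepsilon,\gamma))\to\lambda z/n$ by continuity of $g^\ast$ at $\varepsilon=0$ rather than via the explicit first-column identity of Lemma \ref{firstColumnAgeneral} and the bounds $z(\lambda/n\pm B\varepsilon)$ — and the uniformity-in-$\gamma$ issue you rightly flag is present in, and handled no more explicitly by, the paper's own argument.
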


 The theorem is a generalization of \cite[Theorem IV.12]{Nesti_2020}, as it does not require additional assumptions on $\lambda$ and $\mathcal{G}$. In Section \ref{emergency}, we show that these assumptions are not merely technical, as they exclude certain graphs. In particular, in Example \ref{example}, we demonstrate that the graph in Figure \ref{fig:directed-graph} does not meet the assumptions of the theorem in \cite{Nesti_2020} because in the 4-th step of the cascade, three edges maximize the relative flow exceedance. With our theorem, the analysis of the cascade size in this graph becomes feasible.

 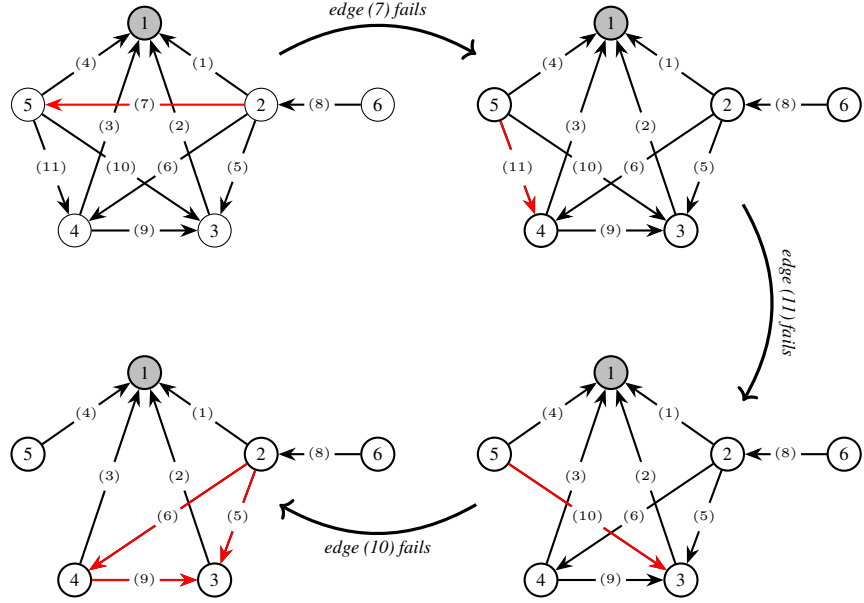
\begin{figure}[h]
\centering
\begin{tikzpicture}[ scale=0.31, every node/.style={scale=0.7}]
    \node (b) at (14.5,-2.6) {};
    \node (a) at (5.5,-2.6) {};
    \node(c) at (14.5,-21.6) {};
    \node (d) at (5.5,-21.6) {};
    \node (e) at (25.5,-8.6) {};
    \node (f) at (25.5,-17.6) {};
    \path[<-, very thick] (b) edge[bend right,very thick] node[above] {\textit{edge (7) fails}}(a);
    \path[->, very thick] (e) edge[bend left] node[above, rotate=-90] {\textit{edge (11) fails}} (f);
    \path[->, very thick] (c) edge[bend left] node[below] {\textit{edge (10) fails}}(d);
        \node[circle,thick,draw, fill = lightgray] (1) at (0,-1.1) {1};
\begin{scope}[every node/.style={circle,draw,scale=0.7}]

    \node (2) at (5,-4.6) {2};
    \node (3) at (3,-10) {3};
    \node (4) at (-3,-10) {4};
    \node (5) at (-5,-4.6) {5};
    \node (6) at (10,-4.6) {6};
\end{scope}

\begin{scope}[>={Stealth[black]},
              every node/.style={fill=white,circle,inner sep=0.5pt, font=\fontsize{4.5}{0}},
              every edge/.style={draw=black,thick, scale=0.7}]
    \path [<-] (1) edge node {$(1)$} (2);
    \path [<-] (1) edge node {$(2)$} (3);
    \path [<-] (1) edge node {$(3)$} (4);
    \path [<-] (1) edge node {$(4)$} (5);
    \path [->] (2) edge node {$(5)$} (3);
    \path [->] (2) edge node {$(6)$} (4);
    \path [<-] (2) edge node {$(8)$} (6);
    \path [->] (4) edge node {$(9)$} (3);
    \path [<-] (3) edge node {$(10)$} (5);
    \path [<-] (4) edge node {$(11)$} (5);
\end{scope}
\begin{scope}[>={Stealth[red]},
              every node/.style={fill=white,circle,inner sep=0.5pt, font=\fontsize{4.5}{0}},
              every edge/.style={draw = red,thick}]
    \path [<-] (5) edge node {$(7)$} (2);
\end{scope}
\begin{scope}[every node/.style={circle,thick,draw, scale = 0.7}]
    \node[circle,thick,draw, fill = lightgray] (7) at (20,-1.1) {1};
    \node (8) at (25,-4.6) {2};
    \node (9) at (23,-10) {3};
    \node (10) at (17,-10) {4};
    \node (11) at (15,-4.6) {5};
    \node (12) at (30,-4.6) {6};
\end{scope}

\begin{scope}[>={Stealth[black]},
              every node/.style={fill=white,circle,inner sep=0.5pt, font=\fontsize{4.5}{0}},
              every edge/.style={draw=black,thick}]
    \path [<-] (7) edge node {$(1)$} (8);
    \path [<-] (7) edge node {$(2)$} (9);
    \path [<-] (7) edge node {$(3)$} (10);
    \path [<-] (7) edge node {$(4)$} (11);
    \path [->] (8) edge node {$(5)$} (9);
    \path [->] (8) edge node {$(6)$} (10);
    \path [<-] (8) edge node {$(8)$} (12);
    \path [->] (10) edge node {$(9)$} (9);
    \path [<-] (9) edge node {$(10)$} (11);
    \path [<-] (10) edge node {$(11)$} (11);
\end{scope}

\begin{scope}[>={Stealth[red]},
              every node/.style={fill=white,circle,inner sep=0.5pt, font=\fontsize{4.5}{0}},
              every edge/.style={draw = red,thick}]
    \path [<-] (10) edge node {$(11)$} (11);
\end{scope}

\begin{scope}[every node/.style={circle,thick,draw, scale = 0.7}]
    \node (1d)[circle,thick,draw, fill = lightgray] at (0,-16.1) {1};
    \node (2d) at (5,-19.6) {2};
    \node (3d) at (3,-25) {3};
    \node (4d) at (-3,-25) {4};
    \node (5d) at (-5,-19.6) {5};
    \node (6d) at (10,-19.6) {6};
\end{scope}

\begin{scope}[>={Stealth[black]},
              every node/.style={fill=white,circle,inner sep=0.5pt, font=\fontsize{4.5}{0}},
              every edge/.style={draw=black,thick}]
    \path [<-] (1d) edge node {$(1)$} (2d);
    \path [<-] (1d) edge node {$(2)$} (3d);
    \path [<-] (1d) edge node {$(3)$} (4d);
    \path [<-] (1d) edge node {$(4)$} (5d);
    \path [->] (2d) edge node {$(5)$} (3d);
    \path [->] (2d) edge node {$(6)$} (4d);
    \path [<-] (2d) edge node {$(8)$} (6d);
    \path [->] (4d) edge node {$(9)$} (3d);
\end{scope}

\begin{scope}[>={Stealth[red]},
              every node/.style={fill=white,circle,inner sep=0.5pt, font=\fontsize{4.5}{0}},
              every edge/.style={draw=red,thick}]
    \path [->] (2d) edge node {$(5)$} (3d);
    \path [->] (2d) edge node {$(6)$} (4d);
    \path [->] (4d) edge node {$(9)$} (3d);
\end{scope}

\begin{scope}[every node/.style={circle,thick,draw, scale = 0.7}]
    \node (1c)[circle,thick,draw, fill = lightgray] at (20,-16.1) {1};
    \node (2c) at (25,-19.6) {2};
    \node (3c) at (23,-25) {3};
    \node (4c) at (17,-25) {4};
    \node (5c) at (15,-19.6) {5};
    \node (6c) at (30,-19.6) {6};
\end{scope}

\begin{scope}[>={Stealth[black]},
              every node/.style={fill=white,circle,inner sep=0.5pt, font=\fontsize{4.5}{0}},
              every edge/.style={draw=black,thick}]
    \path [<-] (1c) edge node {$(1)$} (2c);
    \path [<-] (1c) edge node {$(2)$} (3c);
    \path [<-] (1c) edge node {$(3)$} (4c);
    \path [<-] (1c) edge node {$(4)$} (5c);
    \path [->] (2c) edge node {$(5)$} (3c);
    \path [->] (2c) edge node {$(6)$} (4c);
    \path [<-] (2c) edge node {$(8)$} (6c);
    \path [->] (4c) edge node {$(9)$} (3c);
    \path [<-] (3c) edge node {$(10)$} (5c);
\end{scope}

\begin{scope}[>={Stealth[red]},
              every node/.style={fill=white,circle,inner sep=0.5pt, font=\fontsize{4.5}{0}},
              every edge/.style={draw=red,thick}]
    \path [<-] (3c) edge node {$(10)$} (5c);
\end{scope}

\end{tikzpicture}

\caption{Example of a graph with a positive probability of a non-unique cascade sequence. Specifically, if node 1 has the largest demand, then the failure of edge (7) leads to the subsequent failure of edges (11) and (10). At this stage of the cascade, edges (5), (6), and (9) are the maximizers of the flow exceedance.}
\label{fig:directed-graph}
\end{figure}

To be able to prove the theorem, we first take the following steps:
\begin{enumerate}
    \item Motivated by Lemma \ref{BigJumpLemma}, we show that it is sufficient to consider demand profiles as given in \eqref{genericD}.
    \item In Lemma \ref{AltOptSol}, we reformulate the optimal generation in terms of a projection onto the polytope $\mathcal{F}_d$.
    \item In Proposition \ref{propProjection}, we decompose the optimal generation into the matrix equation $g^*(d) = A_{I^*(d)}d$.
    \item In Lemma \ref{firstColumnAgeneral}, we show that for all demand vectors $d(\varepsilon,\gamma)$ in the neighborhood of $\varepsilon = 0$, the first column of the matrix $A_{I^*(d)}$ is fixed and equal to $(1-\lambda)e_1 + \frac{\lambda}{n}e.$ We exploit this property in the proofs of Proposition \ref{propositionKKT} and Theorem \ref{mainThm}.
    \item In Proposition \ref{propositionKKT}, we show that $A_{I^*(d)}$ depends on $d(\varepsilon,\gamma)$ only through $\gamma$ in the neighborhood of $\varepsilon = 0$. 
    \item Using the result of Lemma \ref{cascadeEpsilon}, we partition $\Gamma$ into finitely many subsets, each corresponding to a particular subset $I^*\in \mathcal{I}$ and a particular cascade sequence in the neighborhood of $\varepsilon = 0$. This yields the corresponding partition of the feasible demand space into subsets $Y(i,l,z,I^*)$.
    \item In Lemma \ref{asymptoticS}, we derive the asymptotic behavior of the probability that $Y_1>x$, $\sum_{j = 2}^n Y_j\leq \varepsilon Y_1$, and $Y\in Y(i,l,z,I^*)$, given that the node with the largest demand has label $i$ and edge $l$ fails first, as $x\rightarrow \infty$. This is a crucial ingredient in the proof of the main result.
\end{enumerate}

Steps 1 to 6 carefully partition the feasible demand space into subsets with certain generation and cascade characteristics. Using this insight, we prove Theorem \ref{mainThm} in the following way. We first divide the demand space into two sets, one of which induces only light-tailed cascade sizes. For the remaining set, we apply the previously derived partition to ensure certain cascade characteristics within each subset. Then, we construct upper and lower bounds for the total cascade size $S$. Using the result of Step 7, we prove that both bounds are asymptotically equal and yield heavy-tailed behavior. This proof strategy is executed, using the following steps.
\begin{enumerate}[resume]
    \item We condition on the label of the node with the largest demand and the first edge failure. 
    \item Motivated by Lemma \ref{BigJumpLemma}, we consider the two cases $ \sum_{j = 2}^n Y_j\leq \varepsilon Y_1$ and $ \sum_{j = 2}^n Y_j> \varepsilon Y_1$. For the latter case, the probability of the total cascade size is of order $\mathcal{O}(x^{-2\alpha})$.
    \item In the former case, we study the total cascade for each event $\{Y\in Y(i,l,z,I^*)\}$ that occurs with positive probability. For $Y\in Y(i,l,z,I^*)$ and $ \sum_{j = 2}^n Y_j\leq \varepsilon Y_1$, the optimal generation and the cascade sequence are known due to Proposition $\ref{propositionKKT}$ and Lemma \ref{cascadeEpsilon}. If none of the events induce a positive total cascade size, we conclude that the tail probability of $S$ is of order $\mathcal{O}(x^{-2\alpha})$, as given in Step 9. Otherwise, we provide an upper and lower bound for the total cascade size $S$ as functions of $i,l,z,I^*$, and $\varepsilon$. These bounds scale linearly with the largest demand. Importantly, as $\varepsilon \downarrow 0$, both bounds converge to the same limit, which is known due to Lemma~\ref{firstColumnAgeneral}.
    \item We apply the results from Steps 7 and 10 to obtain asymptotic expressions for the upper and lower bound for the probability of the total cascade size $S$, showing that both depend on $\varepsilon$.
    \item We take $\varepsilon\downarrow 0$ for both bounds derived in Step 11. The limits are equal and yield an asymptotic expression for the probability of the total cascade size with $Y\in Y(i,l,z,I^*)$ and $\sum_{j = 2}Y_j\leq \varepsilon Y_1$. Summing up over all possible large demand nodes $i\in \mathcal{N}$, first edge failures $l\in \mathcal{E}$, number of disconnected nodes $z\in \mathcal{Z}$ and hyperplane index sets $I^*\in \mathcal{I}$ yields the result of the theorem.
\end{enumerate}

Our proof bears similarities with the proof of Theorem IV.12 in \cite{Nesti_2020}. However, major differences occur in Steps 10 and 11. In Step 10, since we do not impose additional assumptions, the continuity argument used in \cite{Nesti_2020} is not valid. To circumvent this problem, we partition all feasible demands into subsets with properties that allow us to derive the expressions for the total cascade size in each scenario. Furthermore, the proof of \cite[Theorem IV.12]{Nesti_2020} relies on existing results for the asymptotic behavior of $\mathbb{P}(Y_1>x, \sum_{j = 2}^n Y_j\leq \varepsilon Y_1\}$ as $x\rightarrow \infty$. Our proof requires an analogous asymptotic result, derived in Lemma \ref{asymptoticS}, which additionally takes into account the partition of the demand space. We now proceed to describe Steps 1 up to 7 in more detail, provide formal results, and formally prove Theorem \ref{mainThm}. 

In the first step, we rely on the following lemma, proven in the existing literature. This lemma implies that the dominant scenario for a large total cascade involves one node possessing a significantly greater demand than all other nodes combined. Therefore, we can confine our analysis to the demands specified in \eqref{genericD}; all other scenarios can be disregarded in the asymptotic regime. 

\begin{lemma}[Lemma IV.1 of \cite{Nesti_2020}] \label{BigJumpLemma}
    Suppose $X_i$, $i = 1,\dots, n$, are independent and identically distributed Pareto-tailed demands with parameter $\alpha >0$, i.e. $\mathbb{P}(X_i>x) \sim Kx^{-\alpha}$ as $x\rightarrow \infty$. Let $S$ be the total cascade failure size. Then, for every $\varepsilon>0,$ as $x\rightarrow \infty $, 
    \[\mathbb{P}\left(S>x; \sum_{j = 1, j\neq i}^nX_j\geq \varepsilon X_i\right) = \mathcal{O}\left(x^{-2\alpha}\right),\]
    where $i:= \argmax_{i = 1,\dots,n}\{X_i\}$.\end{lemma}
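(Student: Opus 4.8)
\textbf{Proof plan for Lemma \ref{BigJumpLemma}.}

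The plan is to reduce the statement to a known tail bound for sums of independent heavy-tailed random variables, using the structural fact that the total cascade size $S$ is bounded by the total demand in the network. First I would observe that, by construction, $S = \sum_{i=1}^n (d_i - d_i^{(\text{end})}) \leq \sum_{i=1}^n d_i$, since demands are only ever decreased during balance restoration and remain nonnegative; hence $S \leq \sum_{i=1}^n X_i$ deterministically. Therefore, writing $i = \argmax_k X_k$ and $M = X_i$, on the event $\{\sum_{j \neq i} X_j \geq \varepsilon M\}$ we have $S \leq M + \sum_{j\neq i} X_j \leq (1 + 1/\varepsilon)\sum_{j\neq i} X_j$, so that
\[
\mathbb{P}\!\left(S > x;\ \sum_{j\neq i} X_j \geq \varepsilon X_i\right) \leq \mathbb{P}\!\left(\sum_{j\neq i} X_j > \frac{\varepsilon}{1+\varepsilon}\, x\right) \leq \mathbb{P}\!\left(\sum_{k=1}^n X_k > \frac{\varepsilon}{1+\varepsilon}\, x\right).
\]
This removes the cascade dynamics entirely and leaves a purely probabilistic estimate; however, this crude bound only gives $\mathcal{O}(x^{-\alpha})$, not $\mathcal{O}(x^{-2\alpha})$, so a finer argument is needed.

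To get the extra factor of $x^{-\alpha}$, the key point is that the event forces \emph{at least two} of the $X_j$ to be large: on $\{\sum_{j\neq i} X_j \geq \varepsilon M\}$ with $S > x$, not only is $M = \max_k X_k \gtrsim x$ (since $S \leq n M$ forces $M > x/n$), but also $\sum_{j\neq i} X_j \geq \varepsilon M > \varepsilon x / n$, so the second-largest order statistic is at least $\varepsilon x/(n(n-1))$. Thus the event is contained in $\{X_{(1)} > x/n,\ X_{(2)} > c x\}$ for a constant $c = c(n,\varepsilon) > 0$, where $X_{(1)} \geq X_{(2)}$ denote the two largest values. By a union bound over the $\binom{n}{2}$ choices of which two indices achieve these, this event has probability at most $\binom{n}{2}\,\mathbb{P}(X_1 > cx)^2 \cdot$ (something bounded), and since $\mathbb{P}(X_1 > cx) \sim K c^{-\alpha} x^{-\alpha}$, we obtain the bound $\mathcal{O}(x^{-2\alpha})$. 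More carefully, I would write the event as a union over ordered pairs $(i,j)$, $j \neq i$, of $\{X_i > x/n\} \cap \{X_j > cx\}$, use independence to factor each term as $\mathbb{P}(X_i > x/n)\,\mathbb{P}(X_j > cx)$, and apply the Pareto tail asymptotics \eqref{Kconstant} to each factor.

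The main obstacle is making the ``two large values'' reduction airtight — in particular, verifying the deterministic inequality $S \leq \sum_i X_i$ from the balance-restoration rule \eqref{balanceRestoration} (noting that $\theta^{(r+1)} \geq 1$ in the demand-reducing case means $1/\theta^{(r+1)} \leq 1$, so demands never increase, and in the other case demands are unchanged), and then extracting clean constants for the thresholds on $X_{(1)}$ and $X_{(2)}$. Once those elementary bounds are in place, the probabilistic estimate is a routine application of the regularly varying tail together with the union bound; I would also note that this is exactly the content of Lemma IV.1 in \cite{Nesti_2020}, so for the present paper it suffices to cite it, but the above sketch indicates the self-contained argument. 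An alternative, slightly slicker route avoiding case analysis on the two largest values: on the event in question, $\varepsilon M \leq \sum_{j \neq i} X_j$, and combined with $M = \max_j X_j$ this gives $\min\{M, \sum_{j\neq i}X_j\} \geq \frac{\varepsilon}{1+\varepsilon}\sum_{k} X_k \cdot \frac{1}{\text{const}}$; one can then invoke the standard fact that for i.i.d.\ regularly-varying summands, $\mathbb{P}(\sum X_k > x,\ \text{at least two summands} > \delta x) = \mathcal{O}(x^{-2\alpha})$, which follows from the principle of a single big jump.
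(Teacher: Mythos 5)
Your proposal is correct, and it is worth noting that the paper itself does not prove this statement at all: Lemma \ref{BigJumpLemma} is imported verbatim as Lemma IV.1 of \cite{Nesti_2020}, and the text explicitly says it is ``proven in the existing literature.'' So your observation at the end --- that for the present paper a citation suffices --- is exactly what the authors do. Your self-contained sketch is nonetheless sound and is the standard argument behind such statements. The two key ingredients both check out: (i) the deterministic bound $S \leq \sum_{i=1}^n X_i$ follows from \eqref{balanceRestoration}, since demands are either left unchanged or scaled down by $1/\theta^{(r+1)} \leq 1$ and remain nonnegative, so $d_i - d_i^{(\text{end})} \leq d_i$; and (ii) on the event in question, $S \leq nM$ forces $M > x/n$, and then $\sum_{j\neq i} X_j \geq \varepsilon M$ forces the second-largest order statistic to exceed $\varepsilon x/(n(n-1))$, so the event is contained in $\{$at least two of the $X_k$ exceed $cx\}$ with $c = \min\{1/n,\ \varepsilon/(n(n-1))\}$, whose probability is at most $\binom{n}{2}\mathbb{P}(X_1 > cx)^2 = \mathcal{O}(x^{-2\alpha})$ by independence and \eqref{Kconstant}. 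The only cosmetic point is that the ``$\cdot$ (something bounded)'' hedge in your union bound is unnecessary --- the bound is exactly $\binom{n}{2}\mathbb{P}(X_1>cx)^2$ --- and your ``slicker alternative'' at the end is really the same two-big-jumps estimate in disguise, so there is no genuine gap either way.
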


In the second step, we give an alternative formulation of the DC-OPF program \eqref{DC-OPF} that provides a geometrical interpretation of the optimal generation. Specifically, $g^*(d)$ is a point in $\mathcal{F}_d$ minimizing the Euclidean distance to $\bar{d}e$. In other words, $g^*(d)$ can be constructed as an orthogonal projection. This is formally stated in the following lemma.
\begin{lemma}
The vector $g^\ast(d)$ is an optimal solution of the problem
\[\min_{g\in \mathcal{F}_d} \left\{||g - \bar{d}e||_2\right\}\]
if and only if it is an optimal solution of \eqref{DC-OPF}.
    \label{AltOptSol}
\end{lemma}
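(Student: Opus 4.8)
The plan is to show that the two optimization problems have the same objective function up to an affine reparametrization that is constant on the feasible set $\mathcal{F}_d$, so that they share the same minimizers. The key observation is that the constraint \eqref{balance} forces $e^Tg = e^Td = n\bar{d}$ for every feasible $g$, i.e., the sum of the coordinates of $g$ is pinned down. I would exploit this to relate $\frac12\sum_i g_i^2 = \frac12\|g\|_2^2$ to $\|g - \bar{d}e\|_2^2$ via the identity
\[
\|g - \bar{d}e\|_2^2 = \|g\|_2^2 - 2\bar{d}\,e^Tg + \bar{d}^2\|e\|_2^2 = \|g\|_2^2 - 2\bar{d}(n\bar{d}) + n\bar{d}^2 = \|g\|_2^2 - n\bar{d}^2 .
\]
Here the crucial point is that $e^Tg = n\bar d$ holds for all $g \in \mathcal{F}_d$ because of \eqref{balance}, so the cross term $-2\bar d\, e^T g$ is a constant (not depending on $g$) on the feasible region, and $n\bar d^2$ is likewise a constant. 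Hence on $\mathcal{F}_d$ the map $g \mapsto \|g-\bar d e\|_2^2$ differs from $g \mapsto \|g\|_2^2$ only by an additive constant.

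Next I would assemble the equivalence. Since $t \mapsto \tfrac12 t$ and $t \mapsto \sqrt{t}$ are both strictly increasing on $[0,\infty)$, minimizing $\frac12\|g\|_2^2$ over $\mathcal{F}_d$ is equivalent to minimizing $\|g\|_2^2$, which by the displayed identity is equivalent to minimizing $\|g-\bar d e\|_2^2$, which in turn is equivalent to minimizing $\|g-\bar d e\|_2$. Therefore $g \in \mathcal{F}_d$ minimizes the objective \eqref{obj} subject to \eqref{balance}--\eqref{edgeConstraints} if and only if it minimizes $\|g - \bar d e\|_2$ over $\mathcal{F}_d$; this gives both directions of the ``if and only if'' at once, since the argmin sets literally coincide. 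One should also remark that $\mathcal{F}_d$ is exactly the feasible region of \eqref{DC-OPF} (nonnegativity, \eqref{balance}, and \eqref{edgeConstraints} rewritten under the Lemma \ref{orientation} orientation), so ``$g^*(d)$ optimal for \eqref{DC-OPF}'' and ``$g^*(d) \in \argmin_{g\in\mathcal{F}_d}\|g-\bar d e\|_2$'' refer to the same constrained problem.

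I do not anticipate a genuine obstacle here; the only thing to be careful about is bookkeeping. Specifically, one must verify that the feasible set is identical in both formulations --- in the min-distance formulation the constraints \eqref{balance} and \eqref{edgeConstraints} are implicitly retained through the notation $\mathcal{F}_d$, and $\mathcal{F}_d$ already encodes $g \ge 0$ --- and that the reduction to $\|g\|_2^2$ uses \eqref{balance} rather than being an unconditional identity. It is also worth noting (though not strictly needed for the statement) that $\mathcal{F}_d$ is a nonempty compact convex set, which guarantees the minimizer exists and, by strict convexity of the Euclidean norm, is unique; this matches the uniqueness of $g^*(d)$ asserted elsewhere. Beyond that, the argument is a two-line completing-the-square computation combined with monotonicity of $\sqrt{\cdot}$, and no further machinery is required.
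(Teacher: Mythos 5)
Your proposal is correct and follows essentially the same route as the paper: expand $\|g-\bar{d}e\|_2^2$, use the balance constraint \eqref{balance} to show the cross term is constant on $\mathcal{F}_d$, and conclude via monotonicity of the square root that the minimizers coincide. (Your appeal to monotonicity is in fact a cleaner justification than the paper's remark that the root is concave.)
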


In the third step, we leverage the theory of orthogonal projections to establish that for a given $d$, there exists a matrix~$A_{I^*(d)}$, such that $g^\ast(d) = A_{I^*(d)}d$. Furthermore, we characterize the matrix $A_{I^*(d)}$ in terms of the hyperplane index set $I^*(d)$, which defines the face on which $g^*(d)$ lies. There are finitely many subsets $I^*(d)$ because the polytope $\mathcal{F}_d$ has finitely many faces. Hence, the set of solutions $g^*(d)$ can be fully characterized by using a finite number of matrices. This result is given in the following proposition.
\begin{proposition}\label{propProjection}
Let $d$ be a demand vector satisfying \eqref{genericD} with $\varepsilon\geq 0$ sufficiently small. Then,
\hspace{1cm}
    \begin{enumerate}
        \item 
 there exists a matrix $A_{I^*(d)}\in \mathbb{R}^{n\times n}$ such that 
\[g^*(d) = A_{I^*(d)}d;\]
\item there exists a \textit{finite} set of matrices $\mathcal{A}$ such that $g^*(d) = A_{I^*(d)} d$ for some $A_{I^*(d)}\in \mathcal{A}$\,. 
    \end{enumerate}
\end{proposition}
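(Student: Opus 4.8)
The plan is to combine the geometric reformulation from Lemma~\ref{AltOptSol} with the standard theory of orthogonal projections onto affine subspaces. By Lemma~\ref{AltOptSol}, the optimal generation $g^\ast(d)$ is the point of the polytope $\mathcal{F}_d$ closest in Euclidean norm to $\bar d e$. Since $\mathcal{F}_d$ is a non-empty closed convex set (it is non-empty because $g^{(pl)}=\bar d e$ satisfies \eqref{balance} and, as $\varepsilon\downarrow 0$, continuity of $Vd$ keeps the operational constraints \eqref{edgeConstraints} feasible; this is the content of the projection-existence claim referenced after \eqref{Fd}), the minimizer exists and is unique, so $g^\ast(d)$ is a well-defined point on the boundary of $\mathcal{F}_d$. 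The first step is therefore to invoke Lemma~\ref{AltOptSol} and the uniqueness of the metric projection to legitimise speaking of ``the'' hyperplane index set $I^\ast(d)$ and the affine subspace $S_{I^\ast(d)}=\bigcap_{i\in I^\ast(d)}H_i$ containing $g^\ast(d)$.

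The second step is to show that $g^\ast(d)$ is in fact the \emph{orthogonal} (affine) projection of $\bar d e$ onto $S_{I^\ast(d)}$, not merely onto $\mathcal{F}_d$. This is a local optimality argument: since $g^\ast(d)$ lies in the relative interior of the face $\mathcal{F}_d\cap S_{I^\ast(d)}$ (by definition of $I^\ast(d)$ as exactly the set of active constraints), a neighbourhood of $g^\ast(d)$ within $S_{I^\ast(d)}$ is contained in $\mathcal{F}_d$, so $g^\ast(d)$ minimizes $\|g-\bar d e\|_2$ over all of $S_{I^\ast(d)}$; hence it equals the orthogonal projection $\Pi_{S_{I^\ast(d)}}(\bar d e)$. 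Now $S_{I^\ast(d)}$ is the solution set of a linear system of the form $W_{I^\ast(d)} g = c_{I^\ast(d)}(d)$, where the rows of $W_{I^\ast(d)}$ are $e^T$ (for index $0$) and rows $(V)_i$ (for the active edge-constraint indices), and the right-hand side $c_{I^\ast(d)}(d)$ is linear in $d$ — namely $e^Td$ for constraint $0$ and $(1\mp\lambda)(Vd)_i$ for the active edge constraints. The orthogonal projection of a point $p$ onto the affine set $\{g: Wg=c\}$ has the closed form $p - W^{+}(Wp - c)$ (using the Moore–Penrose pseudo-inverse, valid whether or not $W$ has full row rank). Applying this with $p=\bar d e=\tfrac1n (e^Td)e$ and observing that both $p$ and $c_{I^\ast(d)}(d)$ are linear in $d$, we obtain $g^\ast(d)=A_{I^\ast(d)}\,d$ for an explicit matrix $A_{I^\ast(d)}$ depending only on the index set $I^\ast(d)$ (through $W_{I^\ast(d)}$, $W_{I^\ast(d)}^{+}$, and the affine-to-linear coefficient of $c$), and on the fixed data $V,\lambda,n$. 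This proves part~1, and also identifies $A_{I^\ast(d)}$ with the matrix used later in Lemma~\ref{lemmaProjection}/Proposition~\ref{propositionKKT}.

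For part~2, the key observation is that $I^\ast(d)\in\mathcal{I}=2^{\{0,\dots,2m\}}$, a finite set, and by the construction in part~1 the matrix $A_{I^\ast(d)}$ is a function of the index set alone. Hence defining $\mathcal{A}:=\{A_{I^\ast}: I^\ast\in\mathcal{I}\}$ gives a finite family (of cardinality at most $2^{2m+1}$) with $g^\ast(d)=A_{I^\ast(d)}d\in\{A_{I^\ast}d: A_{I^\ast}\in\mathcal{A}\}$, as required. I expect the main obstacle to be the second step: carefully justifying that $g^\ast(d)$ is the orthogonal projection onto the \emph{full} affine subspace $S_{I^\ast(d)}$ and that this subspace is defined by a linear system whose right-hand side is genuinely linear in $d$ — in particular handling the pseudo-inverse cleanly when the active constraints are linearly dependent (which can happen since $V$ itself is rank-deficient, $\mathrm{Ker}(V)=\langle e\rangle$), so that the formula $p-W^{+}(Wp-c)$ still produces the correct projection and a well-defined matrix $A_{I^\ast(d)}$. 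The restriction to $\varepsilon\geq 0$ sufficiently small enters only to guarantee feasibility of $\mathcal{F}_d$ (so that the projection exists) and to align with the orientation convention of Lemma~\ref{orientation} under which the constraints take the simplified form in \eqref{Fd}.
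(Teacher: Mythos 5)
Your proposal follows essentially the same route as the paper: Lemma~\ref{AltOptSol} turns the DC-OPF into a metric projection of $\bar d e$ onto $\mathcal{F}_d$, the active-constraint index set $I^\ast(d)$ identifies the affine subspace $S_{I^\ast(d)}$ onto which $g^\ast(d)$ is the orthogonal projection, the pseudo-inverse projection formula with a right-hand side linear in $d$ yields $g^\ast(d)=A_{I^\ast(d)}d$, and finiteness of $2^{\{0,\dots,2m\}}$ gives part~2 — this is exactly the content of the paper's Lemma~\ref{lemmaProjection} and its appendix proof. Your relative-interior argument for why the projection onto $\mathcal{F}_d$ coincides with the orthogonal projection onto all of $S_{I^\ast(d)}$ is in fact spelled out more explicitly than in the paper, which simply asserts this step.

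One justification in your write-up is wrong, though the conclusion it supports is still true: you claim $\mathcal{F}_d$ is non-empty because $\bar d e$ satisfies the constraints for small $\varepsilon$. By construction $\bar d e$ is \emph{never} feasible (unless $Vd=0$): since $Ve=0$, the induced flow is $V(d-\bar d e)=Vd$, and the constraint $|Vd|\leq\lambda|Vd|$ fails for $\lambda<1$. Indeed, the infeasibility of $\bar d e$ is precisely why the projection lands on the boundary of $\mathcal{F}_d$ and the facet decomposition is non-trivial; if $\bar d e$ were feasible your own claim that $g^\ast(d)$ lies on the boundary would collapse. The correct witness is $g=d$ itself (each node generates its own demand, giving zero flow), which is what the paper uses. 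With that substitution the argument is complete.
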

In the fourth step, we derive the following lemma, stating that the first column of $A_{I^*(d)}$ is fixed for all $\gamma\in \Gamma$ and all $\varepsilon>0$ sufficiently small. This column is equal to the optimal generation induced by demand vector $e_1$, i.e., the demand vector in the limiting case of $\varepsilon = 0$.
\begin{lemma} \label{firstColumnAgeneral}
Let $I^*(d)$ be the hyperplane index set for a demand vector $d$ satisfying \eqref{genericD} with $\varepsilon\geq 0$ sufficiently small. Then,
\[A_{I^*(d)}e_1 = (1-\lambda)e_1 + \frac{\lambda}{n}e.\]
\end{lemma}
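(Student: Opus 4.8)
The plan is to show that the first column of $A_{I^*(d)}$ is precisely $g^*(e_1)$, the optimal generation for the degenerate demand $e_1$, and then to identify $g^*(e_1)$ explicitly as $(1-\lambda)e_1 + \frac{\lambda}{n}e$. The first part rests on the linearity established in Proposition \ref{propProjection}: since $g^*(d) = A_{I^*(d)}d$ and $d = e_1 + \varepsilon\gamma$, we have $g^*(d) = A_{I^*(d)}e_1 + \varepsilon A_{I^*(d)}\gamma$. The key point is that, by Proposition \ref{propProjection}(1) and the fact that $I^*(d)$ is constant on a neighborhood of $\varepsilon = 0$ for fixed $\gamma$ (anticipating Proposition \ref{propositionKKT}, but in fact only the existence of the affine representation on that neighborhood is needed), the matrix $A_{I^*(d)}$ itself does not depend on $\varepsilon$ along this ray; hence taking $\varepsilon \downarrow 0$ and using continuity of $g^*(\cdot)$ (the DC-OPF solution is a continuous function of the demand, as the feasible region $\mathcal{F}_d$ varies continuously and the objective is strictly convex), we get $A_{I^*(d)}e_1 = \lim_{\varepsilon\downarrow 0} g^*(e_1 + \varepsilon\gamma) = g^*(e_1)$.

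It then remains to compute $g^*(e_1)$ directly from the DC-OPF program \eqref{DC-OPF} with $d = e_1$. Here $\bar d = 1/n$, so the planning generation is $g^{(pl)} = \frac{1}{n}e$ and the operational limits are $\bar f = \lambda|Ve_1|$. By Lemma \ref{AltOptSol}, $g^*(e_1)$ is the point of $\mathcal{F}_{e_1}$ closest to $\frac{1}{n}e$ in Euclidean norm. I would verify that the candidate point $g := (1-\lambda)e_1 + \frac{\lambda}{n}e$ is (i) feasible, i.e. $g \in \mathcal{F}_{e_1}$, and (ii) optimal, by a KKT / projection argument. For feasibility: $e^T g = (1-\lambda) + \lambda = 1 = e^T e_1$, and $g \ge 0$ since $\lambda\in(0,1)$; for the flow constraints, note $Vg = (1-\lambda)Ve_1 + \frac{\lambda}{n}Ve = (1-\lambda)Ve_1$ because $Ve = 0$ (as $\operatorname{Ker} V = \langle e\rangle$), so $V(e_1 - g) = \lambda Ve_1$, which satisfies $-\lambda|Ve_1| \le \lambda Ve_1 \le \lambda|Ve_1|$ with equality componentwise in absolute value — so in fact $g$ lies on every flow hyperplane and is a vertex-like extreme point of the polytope. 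For optimality: the displacement $g - \frac{1}{n}e = (1-\lambda)(e_1 - \frac{1}{n}e)$ is parallel to $e_1 - \frac{1}{n}e$, which is orthogonal to $e$ (since $e^T(e_1 - \frac1n e) = 1 - 1 = 0$); I claim this displacement lies in the normal cone to $\mathcal{F}_{e_1}$ at $g$. The relevant active constraints at $g$ are the balance hyperplane $H_0$ (normal $e$) and the upper flow constraint $V(e_1 - g) \le \lambda|Ve_1|$ which, using the orientation of Lemma \ref{orientation} making $Ve_1 \ge 0$, reads $Vg \ge (1-\lambda)Ve_1$, active with normals given by the rows of $V$; the displacement must be expressible as $\mu e - V^T \nu$ with $\nu \ge 0$. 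Writing $e_1 = V^+ Ve_1 + \frac1n e$ (orthogonal decomposition, since $e_1$'s component along $\operatorname{Ker}V = \langle e\rangle$ has coefficient $\frac1n$), the displacement $(1-\lambda)(e_1 - \frac1n e) = (1-\lambda)V^+Ve_1$, which is in the row space of $V$, i.e. of the form $-V^T\nu$; checking $\nu \ge 0$ uses $Ve_1 \ge 0$ together with properties of the pseudo-inverse. This confirms $g = g^*(e_1)$.

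The main obstacle I anticipate is making the optimality verification for $g^*(e_1)$ fully rigorous, specifically the normal-cone / KKT computation showing the displacement $(1-\lambda)V^+Ve_1$ decomposes correctly against the active flow constraints with nonnegative multipliers. The subtlety is that $e_1$ need not itself be feasible, so one cannot simply say "$g^*(e_1) = e_1$"; rather the projection genuinely shrinks toward $\frac1n e$, and one must argue that the active face at the projected point consists of \emph{all} the flow hyperplanes (which is what makes the first column of $A_{I^*(d)}$ independent of which face $S_{I^*(d)}$ the general demand $d$ projects onto). An alternative, perhaps cleaner route that sidesteps the explicit KKT bookkeeping: observe that $g = (1-\lambda)e_1 + \frac{\lambda}{n}e$ satisfies $V(e_1 - g) = \lambda Ve_1 = \bar f$ componentwise, so $g$ saturates all $m$ upper flow limits; since $\mathcal{F}_{e_1} \subseteq \{g : V(e_1-g) \le \lambda|Ve_1| = \lambda Ve_1\}$, the point $g$ is an extreme point of $\mathcal{F}_{e_1}$, and one shows it is the \emph{unique} point minimizing distance to $\frac1n e$ by noting any feasible $g'$ satisfies $\|g' - \frac1n e\|^2 = \|V^+V(g' - \frac1n e)\|^2 + \|\text{(component along }e)\|^2 \ge \|V^+V(g - \frac1n e)\|^2$ with the inequality coming from the flow constraints forcing the row-space component to have norm at least that of $g$'s. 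I would present whichever of these arguments turns out shorter, and defer routine matrix manipulations to the appendix as the paper does elsewhere.
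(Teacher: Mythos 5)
Your route is genuinely different from the paper's. The paper never passes through $g^*(e_1)$ by a limit: it works directly with the explicit projection formula of Lemma \ref{lemmaProjection}, shows by a block-matrix computation (using Lemmas \ref{fullLine} and \ref{tightConstraints}) that the first \emph{row} and first \emph{column} of $A_{I^*(d)}$ coincide, and then identifies the first row by a flow-conservation argument at node $1$ (Lemma \ref{firstColumnA}: all edges in $\mathcal{E}_1$ are saturated, so $g_1 = 1-\lambda(1-\tfrac{1+\varepsilon}{n})$, which pins down $e_1^TA_{I^*}$). Your approach — identify $A_{I^*(d)}e_1$ with $g^*(e_1)$ and then compute $g^*(e_1)$ by KKT — is shorter and more conceptual, and your KKT verification of $g^*(e_1)=(1-\lambda)e_1+\frac{\lambda}{n}e$ is essentially the paper's own Lemma \ref{solutionCd} specialized to $d=e_1$ (the multipliers $\nu=0$, $\mu=(1-\lambda)Ce_1$, $\delta=-\bar d$ work, with $Ce_1\geq 0$ under the chosen orientation); indeed the paper already uses this closed form, citing \cite{Nesti_2020}, inside the proof of Lemma \ref{tightConstraints}, and it also uses the mpQP continuity of $g^*(\cdot)$ there, so both ingredients are legitimately available to you.

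The one step that needs repair is the reduction $A_{I^*(d)}e_1=\lim_{\varepsilon\downarrow 0}g^*(e_1+\varepsilon\gamma)$. You cannot justify the constancy of $A_{I^*(d)}$ along the ray by appealing to Proposition \ref{propositionKKT}: in the paper that proposition's proof \emph{uses} the present lemma (it invokes $\hat a_1=(1-\lambda)e_1+\frac{\lambda}{n}e$), so this would be circular. Your fallback — that ``only the existence of the affine representation on that neighborhood is needed'' — is not sufficient as stated: Proposition \ref{propProjection} gives a representation $g^*(d)=A_{I^*(d)}d$ for each $\varepsilon$ separately, but the index set, and hence the matrix, could a priori change with $\varepsilon$, in which case the limit of $A_{I^*(d(\varepsilon,\gamma))}e_1$ is not defined. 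The gap closes with a finiteness argument: $\mathcal{I}$ is finite, so for each $I^*$ whose occurrence set $E_{I^*}=\{\varepsilon: I^*(d(\varepsilon,\gamma))=I^*\}$ accumulates at $0$, pick $\varepsilon_k\downarrow 0$ in $E_{I^*}$; then $A_{I^*}e_1=\lim_k\bigl(A_{I^*}e_1+\varepsilon_kA_{I^*}\gamma\bigr)=\lim_k g^*(e_1+\varepsilon_k\gamma)=g^*(e_1)$ by continuity, since $A_{I^*}$ depends only on the index set. Taking $\varepsilon_0$ below $\min\{\inf E_{I^*}:\inf E_{I^*}>0\}$ then guarantees that every $I^*(d)$ arising for $\varepsilon<\varepsilon_0$ is of this type, which is exactly the ``$\varepsilon\geq 0$ sufficiently small'' in the statement. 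With that insertion your proof is complete; your alternative ``cleaner route'' at the end, by contrast, has an unproved inequality (that the flow box constraints force $\|V^+Vg'\|\geq\|V^+Vg\|$), and I would drop it in favor of the KKT verification.
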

In the fifth step, using the Karush–Kuhn–Tucker conditions, we show that for $\varepsilon$ sufficiently small, the matrix $A_{I^*(d)}$ depends on~$d$ only through $\gamma$. The subsequent proposition makes this statement explicit.

\begin{proposition} \label{propositionKKT}
    For a demand vector $d(\varepsilon,\gamma)$ defined in \eqref{genericD} and $\varepsilon>0$ small enough, the optimal generation $g^*(d)$ is given by
    \begin{equation}g^\ast(d) = A_{\gamma}d,\label{KKTequation}\end{equation}
for some $A_{\gamma}\in \mathbb{R}^{n\times n}$. More specifically, there exists a mapping $\phi: \Gamma\rightarrow \mathcal{I}$ such that the hyperplane index set is equal to $I^*(d) = \phi(\gamma)$ for all $\varepsilon>0$ sufficiently small. Hence, Equation \eqref{KKTequation} holds with $A_{\gamma} := A_{\phi(\gamma)}$.
\end{proposition}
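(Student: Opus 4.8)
\textbf{Proof proposal for Proposition \ref{propositionKKT}.}

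The plan is to analyze the KKT (Karush--Kuhn--Tucker) conditions of the DC-OPF problem \eqref{DC-OPF} at the optimal generation $g^*(d)$, and to show that the set of active constraints stabilizes as $\varepsilon \downarrow 0$. By Lemma \ref{AltOptSol}, $g^*(d)$ is the orthogonal projection of $\bar d e$ onto the polytope $\mathcal{F}_d$, and by Proposition \ref{propProjection} this projection is realized by a matrix $A_{I^*(d)}$ associated with the face $S_{I^*(d)}$ determined by the hyperplane index set $I^*(d)$ from \eqref{Istar}. Since $\mathcal{I} = 2^{\{0,\dots,2m\}}$ is finite, it suffices to show that there is a fixed index set $I^*$ valid for \emph{all} $\varepsilon > 0$ below some threshold $\varepsilon_0 = \varepsilon_0(\gamma)$, and that this index set depends on $d$ only through $\gamma$. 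Defining $\phi(\gamma) := I^*$ and $A_\gamma := A_{\phi(\gamma)}$ then gives the claim, with $g^*(d) = A_{\phi(\gamma)} d$ following from Proposition \ref{propProjection}.

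First I would write out the KKT system explicitly: stationarity gives $g^*_i = \mu_0 + (V^T(\eta - \nu))_i$ for Lagrange multipliers $\mu_0 \in \mathbb{R}$ (balance constraint \eqref{balance}), $\nu \geq 0$ (lower edge bounds), $\eta \geq 0$ (upper edge bounds), together with primal feasibility $g^* \in \mathcal{F}_{d}$, dual feasibility $\nu,\eta \geq 0$, and complementary slackness. Recall from \eqref{genericD} that $d = d(\varepsilon,\gamma) = e_1 + \varepsilon\gamma$, so $Vd = Ve_1 + \varepsilon V\gamma$, and under the orientation of Lemma \ref{orientation} we have $Ve_1 \geq 0$ and $Vd \geq 0$ for $\varepsilon$ small. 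The key structural observation is that every quantity in the KKT system is \emph{affine in $\varepsilon$} once $\gamma$ is fixed: the constraint right-hand sides $(1\pm\lambda)(Vd)_i = (1\pm\lambda)((Ve_1)_i + \varepsilon(V\gamma)_i)$ are affine in $\varepsilon$, and $\bar d = \frac 1n(1 + \varepsilon)$ is affine in $\varepsilon$. For a \emph{fixed} candidate active set $I^*$, solving the (square, by Proposition \ref{propProjection}) linear KKT subsystem gives a candidate primal-dual pair $(g^*(\varepsilon), \mu_0(\varepsilon), \nu(\varepsilon), \eta(\varepsilon))$ whose entries are each affine (or, after inverting an $\varepsilon$-independent matrix, affine) functions of $\varepsilon$. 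The candidate is the true optimum precisely when the finitely many inequality conditions --- dual feasibility $\nu(\varepsilon),\eta(\varepsilon)\geq 0$ and primal feasibility of the \emph{inactive} constraints --- all hold. Each such inequality is of the form $a\varepsilon + b \geq 0$ for constants $a,b$ depending only on $\gamma$ (and $\mathcal{G},\lambda$); hence its solution set is an interval, and a finite intersection of such intervals containing a right-neighborhood of $0$ either contains $(0,\varepsilon_0)$ for some $\varepsilon_0 > 0$ or is empty near $0$. Since for each $\varepsilon$ some index set must be optimal and there are finitely many candidates, by pigeonhole at least one index set $I^*$ is optimal for a sequence $\varepsilon_k \downarrow 0$, and by the interval property it is then optimal on an entire interval $(0,\varepsilon_0)$. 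This $I^*$ depends only on $\gamma$ since all the defining inequalities do; set $\phi(\gamma) := I^*$.

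The main obstacle I anticipate is handling the case $\varepsilon = 0$ cleanly, i.e.\ ensuring the threshold $\varepsilon_0(\gamma)$ is strictly positive and that the limiting behaviour at $\varepsilon = 0$ is consistent --- in particular, ruling out the degenerate possibility that the optimal active set oscillates between two index sets on every neighborhood of $0$. This is precisely what the interval (affine-in-$\varepsilon$) structure prevents: two distinct index sets cannot each be optimal on a set accumulating at $0$ unless they agree at $0$ and differ only by constraints that become active exactly at $\varepsilon = 0$, which one excludes either by the uniqueness of the projection (Lemma \ref{AltOptSol}, used in Lemma \ref{lemmaProjection}) or by perturbing to the strictly-smaller active set. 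A secondary technical point is confirming that the KKT subsystem for the chosen $I^*$ is genuinely invertible with an $\varepsilon$-independent coefficient matrix so that the affine dependence on $\varepsilon$ is rigorous; this follows from the characterization of $A_{I^*}$ as an orthogonal projection matrix onto the fixed affine subspace $S_{I^*}$ (whose \emph{direction} is $\varepsilon$-independent, only its offset moving affinely with $\varepsilon$), established in Proposition \ref{propProjection}. Combining these, $g^*(d) = A_{\phi(\gamma)} d$ holds for all $\varepsilon > 0$ small, completing the proof.
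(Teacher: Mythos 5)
Your proposal is correct in its essential logic, but it takes a genuinely different route from the paper. The paper's proof also works through the KKT system, but it proceeds by \emph{interpolation}: it fixes a small $\hat{\varepsilon}$, takes the optimal solution $g^*(\hat d)=\hat A_{I^*}\hat d$ and its multipliers $(\hat\mu,\hat\nu,\hat\delta)$, takes the explicitly known solution and multipliers at $\varepsilon=0$ (namely $\tilde\mu=(1-\lambda)Ce_1$, $\tilde\nu=0$, $\tilde\delta=-\frac1n$ from Nesti et al.), and verifies condition by condition that the convex combinations $\mu=(1-\alpha)\tilde\mu+\alpha\hat\mu$, etc., certify optimality of $\hat A_{I^*}d$ at $\varepsilon=\alpha\hat\varepsilon$ for every $\alpha\in(0,1)$. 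This requires the auxiliary Lemmas \ref{fullLine}, \ref{tightConstraints} and \ref{firstColumnAgeneral} to check complementary slackness and feasibility, but it sidesteps any discussion of whether the KKT subsystem is invertible or the multipliers unique, and it identifies $\phi(\gamma)$ concretely as $I^*(\hat d)$. Your argument is instead a generic active-set-stability (multiparametric QP) argument: critical regions along the ray $\varepsilon\mapsto e_1+\varepsilon\gamma$ are intervals because all defining inequalities are affine in $\varepsilon$, and pigeonhole over the finitely many active sets yields one valid on $(0,\varepsilon_0)$. This is shorter conceptually and does not need the closed-form $\varepsilon=0$ solution, but two points deserve more care than you give them: (i) when the active constraint gradients are linearly dependent the multipliers are not unique and your "square linear KKT subsystem" is not well-defined --- the fix is to observe that the set of valid multipliers is a polyhedron described by data affine in $\varepsilon$, whose projection onto the $\varepsilon$-axis is still an interval; and (ii) the index set your pigeonhole produces is a \emph{working} active set, whereas $I^*(d)$ in \eqref{Istar} is the full set of hyperplanes containing $g^*(d)$, which could pick up additional coincidentally tight constraints at isolated values of $\varepsilon$ (each such coincidence is an affine equation in $\varepsilon$, so one shrinks $\varepsilon_0$ below the smallest positive root to make $I^*(d)$ genuinely constant). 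With these repairs your approach is a valid, and arguably more modular, alternative proof.
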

In the sixth step, we derive the following lemma, which shows that the cascade sequence also depends on $d$ only through $\gamma$ in the neighborhood of $\varepsilon = 0$. 
\begin{lemma}\label{cascadeEpsilon}
Assume that node $i\in \mathcal{N}$ has the largest demand and edge $e\in \mathcal{E}$ is the first to fail. Then, for a given $\gamma\in \Gamma$, the cascade sequence and the resulting graph $\mathcal{G}^{(\text{end})}$ are independent of $\varepsilon$ for all $\varepsilon>0$ sufficiently small. \end{lemma}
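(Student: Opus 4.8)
The plan is to prove the claim by induction on the steps of the cascade, showing at each step that the graph, the demand/generation vectors, and the identity of the next failing edge (or the termination of the cascade) depend on $d(\varepsilon,\gamma)$ only through $\gamma$, for all $\varepsilon>0$ sufficiently small. The base case is the start of the emergency stage: the graph $\mathcal{G}^{(2)}$ after the failure of edge $l$ does not depend on $\varepsilon$ at all, and by Proposition \ref{propositionKKT} the optimal generation is $g^*(d)=A_\gamma d = A_\gamma e_1 + \varepsilon A_\gamma \gamma$, which is affine in $\varepsilon$ with coefficients depending only on $\gamma$. Combining this with Lemma \ref{firstColumnAgeneral}, the constant term is the fixed vector $(1-\lambda)e_1+\frac{\lambda}{n}e$. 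The balance-restoration map \eqref{balanceRestoration} applied componentwise within each connected component of $\mathcal{G}^{(2)}$ then produces $d^{(2)}$ and $g^{(2)}$; the only subtlety is the case distinction on whether $\theta^{(2)}\gtrless 1$ in a given component, but since the numerator and denominator of $\theta^{(2)}$ are each affine in $\varepsilon$ with $\gamma$-dependent coefficients, the sign of $\theta^{(2)}-1$ is eventually constant as $\varepsilon\downarrow 0$ (it is either identically the same, or determined by the sign of the $\varepsilon=0$ value, or — in the degenerate case where the $\varepsilon=0$ value equals $1$ — by the sign of the linear coefficient). Hence $d^{(2)}$, $g^{(2)}$, and therefore $f^{(2)}=V^{(2)}(d^{(2)}-g^{(2)})$ are all affine in $\varepsilon$ with $\gamma$-dependent coefficients, for $\varepsilon$ small.

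The inductive step repeats this structure. Suppose after $r$ steps the graph $\mathcal{G}^{(r+1)}$ is fixed (independent of $\varepsilon$) and the vectors $d^{(r+1)}$, $g^{(r+1)}$, $f^{(r+1)}$ are each affine functions of $\varepsilon$ whose coefficients depend only on $\gamma$ (and on the history $i,l$ and the realized partial cascade sequence, which by induction is itself $\varepsilon$-independent). The relative exceedances $\psi^{(r+1)}_e = |f^{(r+1)}_e|/F_e$ are then ratios of (absolute values of) affine functions of $\varepsilon$; for $\varepsilon$ small enough the sign of each $f^{(r+1)}_e$ stabilizes, so each $\psi^{(r+1)}_e$ is a genuine rational function of $\varepsilon$ on a punctured neighborhood of $0$, extending continuously to $\varepsilon=0$. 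Whether any $\psi^{(r+1)}_e>1$, and which edge (or set of tied edges, resolved by the fixed deterministic rule $T$ of Definition \ref{tie-breakingDef}) attains the maximum, is then decided by finitely many pairwise comparisons of such rational functions; since two distinct analytic functions agree only on an isolated set, each comparison has a definite, constant outcome on a sufficiently small punctured interval $(0,\varepsilon_0)$. Taking the minimum of the finitely many thresholds $\varepsilon_0$ over all steps (the cascade has at most $m$ steps) and over all comparisons yields a single $\bar\varepsilon>0$ below which the entire cascade sequence, and hence $\mathcal{G}^{(\text{end})}$, is independent of $\varepsilon$.

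The main obstacle is handling the \emph{degenerate ties}: the cases where two exceedances $\psi^{(r+1)}_{e}$ and $\psi^{(r+1)}_{e'}$, or an exceedance $\psi^{(r+1)}_e$ and the threshold $1$, coincide not just at isolated points but identically in $\varepsilon$ near $0$. In the first case the tie is real and persists, and it is precisely here that the tie-breaking rule $T$ is needed to keep the next failing set well-defined; one must check that $T$, being a fixed deterministic function of the maximizer set, returns the same output independently of $\varepsilon$ — which is immediate once the maximizer set itself is shown $\varepsilon$-independent. In the second case ($\psi^{(r+1)}_e\equiv 1$), the edge is on the boundary of failing; one must argue, using that $\psi^{(r+1)}_e$ is either identically $1$ or has a one-sided limit strictly above or below $1$, that the "exceeds the emergency limit" predicate $\psi^{(r+1)}_e>1$ also has a constant truth value on $(0,\bar\varepsilon)$ — if $\psi^{(r+1)}_e\equiv 1$ it is simply never exceeded. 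A secondary technical point is to make precise the claim that the relevant quantities are affine in $\varepsilon$: this follows because each step's operations — multiplication by the fixed matrices $V^{(r+1)}$ and $A_{I^*}$, restriction to subgraphs, and the piecewise-linear balance map with eventually-constant branch — preserve the class of vector-valued functions that are affine in $\varepsilon$ with $\gamma$-dependent coefficients, so a clean induction on this invariant closes the argument.
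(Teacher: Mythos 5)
Your proposal takes essentially the same route as the paper: a step-by-step induction in which the demand, generation and flows are tracked as explicit functions of $\varepsilon$, the exceedances become rational functions of $\varepsilon$, and all pairwise comparisons (including ties and the threshold $1$) stabilize on a small interval $(0,\bar\varepsilon)$ because distinct rational functions can agree only at isolated points — the paper phrases this via successive derivative tests at $\varepsilon=0$, which is the same mechanism. The one inaccuracy is your induction invariant that $d^{(r+1)}$, $g^{(r+1)}$, $f^{(r+1)}$ remain \emph{affine} in $\varepsilon$: after a balance restoration with $\theta^{(r+1)}\neq 1$ the rescaled demands or generations are ratios of affine functions, so the correct invariant (and the one the paper uses, noting the exceedances are rational of growing degree) is that these quantities are \emph{rational} in $\varepsilon$; your stabilization argument applies verbatim to that class, so the fix is cosmetic.
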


As a consequence of Lemma \ref{cascadeEpsilon}, given the largest demand node $N = i$ and the first edge failure $L^{(1)} = l,$ the number of nodes disconnected from $i$ is uniquely defined by $\gamma$ in the neighborhood of $\varepsilon$. As a result, we can classify the demands into subsets $Y(i,l,z,I^*)$, as defined in \eqref{demandsPartition}. Each demand in a subset has the following properties: first, it has its largest component at node $i$; second, it yields generation characterized by the hyperplane index set $I^*$; and last, it results in $z$ nodes disconnected from node $i$ after the cascade initiated by edge $l$.

In the seventh step, we derive an asymptotic result for $\mathbb{P}\left(Y_1>x, \sum_{j = 2}^nY_j\leq \varepsilon Y_1, Y\in Y(i,l,z,I^*)\right)$ as $x\rightarrow~\infty$. Our result shows that, in the limit, the event $\{Y\in Y(i,l,z,I^*)\}$ becomes independent of the event \mbox{\(\{Y_1>x,\sum_{j = 2}^nY_j\leq\varepsilon Y_1\}\)}. Moreover, $\mathbb{P} (Y_1>x,\sum_{j = 2}^n Y_j\leq \varepsilon Y_1)\sim n\mathbb{P}(X_1>x)$. This is an extension of the \textit{principle of a single big jump} that states that $\mathbb{P}(Y_1>x)\sim n\mathbb{P}(X_1>x)$ \cite{nair_wierman_zwart_2022}. Our result is given in the following lemma.
\begin{lemma} \label{asymptoticS}
Consider $X_1,\dots, X_n$ independent and identically distributed Pareto random variables with parameter $\alpha>0$, and the corresponding random variable $Y$ as defined in \eqref{Y}. Then, for $\varepsilon>0$ sufficiently small and all $i\in \mathcal{N}$, $l\in \mathcal{E}$, $z\in \mathcal{Z}$, and $I^\ast\in \mathcal{I}$, for which $\mathbb{P}(Y\in Y(i,l,z,I^*))>0$, we have that
\[\mathbb{P}\left(Y_1>x, \hspace{-0.1cm}\sum_{j = 2}^n Y_j\leq \varepsilon Y_1,Y\in Y(i,l,z, I^\ast)| N = i, L^{(1)} = l\right)\sim n\mathbb{P}(X_1>x)\mathbb{P}(Y\in Y(i,l,z, I^\ast)| N = i, L^{(1)} = l).\]
\end{lemma}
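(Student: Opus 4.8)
The plan is to reduce the joint event to a product by exploiting the scale-invariance of the partition sets $Y(i,l,z,I^\ast)$ together with the standard single-big-jump asymptotics. First I would observe that membership $Y\in Y(i,l,z,I^\ast)$ is, by the defining Equation~\eqref{demandsPartition}, a condition only on the \emph{ratios} $Y_2:Y_3:\dots:Y_n$ — equivalently on $\gamma\in\Gamma(i,l,z,I^\ast)$ — and is therefore invariant under simultaneous scaling of $(Y_2,\dots,Y_n)$ and, indeed, under scaling of the whole vector $Y$. Hence the event $\{Y\in Y(i,l,z,I^\ast)\}$ carries no information about the overall magnitude $Y_1$, only about the ``shape'' of the non-maximal coordinates. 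The key step is to make this heuristic precise in the asymptotic regime $x\to\infty$ with the extra truncation $\sum_{j\ge 2}Y_j\le\varepsilon Y_1$.

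Concretely, I would condition on $\{N=i, L^{(1)}=l\}$ throughout (this only fixes which unordered coordinate of $X$ is the max and which edge fails first, and by i.i.d.\ symmetry it does not disturb the tail structure), and then write, using the ordering in \eqref{Y},
\[
\mathbb{P}\Bigl(Y_1>x,\ \textstyle\sum_{j=2}^n Y_j\le\varepsilon Y_1,\ Y\in Y(i,l,z,I^\ast)\ \big|\ N=i,L^{(1)}=l\Bigr)
= \sum_{k=1}^n \mathbb{P}\Bigl(X_k>x,\ X_k=\max_j X_j,\ \textstyle\sum_{j\ne k}X_j\le\varepsilon X_k,\ (\text{ratios of }X_{\cdot\ne k})\in\Gamma(i,l,z,I^\ast)\ \big|\ \cdots\Bigr).
\]
For the term with $X_k>x$ I would integrate over the value $X_k=t>x$: conditionally on $X_k=t$, the remaining coordinates $(X_j)_{j\ne k}$ are i.i.d.\ Pareto, the constraint $\sum_{j\ne k}X_j\le\varepsilon t$ forces all of them to be $o(t)$ (in fact bounded by $\varepsilon t$), and on that event the normalized vector $(X_j)_{j\ne k}/t$ has a distribution that, as $t\to\infty$, converges to that of $(X_j)_{j\ne k}/t$ conditioned only on being small — crucially, the ratio profile $\gamma$ of $(X_j)_{j\ne k}$ is independent of $t$ because the $X_j$ are independent of $X_k$. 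Therefore the probability that the ratio profile lands in $\Gamma(i,l,z,I^\ast)$ factors out, up to an error that vanishes as the scaling parameter grows. I would control this factorization with a dominated-convergence / uniform-integrability argument: the event $\{\sum_{j\ne k}X_j\le\varepsilon t\}$ has probability tending to $1$ as $t\to\infty$, and on its complement the contribution is negligible relative to $\mathbb{P}(X_1>x)$ by Lemma~\ref{BigJumpLemma} (which already gives the $\mathcal{O}(x^{-2\alpha})$ bound for the part where the non-maximal sum is not small). Summing the $n$ symmetric terms then produces the factor $n\,\mathbb{P}(X_1>x)$, and the remaining factor is exactly $\mathbb{P}(Y\in Y(i,l,z,I^\ast)\mid N=i,L^{(1)}=l)$ by definition of the partition.

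I expect the main obstacle to be the uniformity of the ratio-profile convergence: one must show that $\mathbb{P}\bigl((\text{ratios of }X_{\cdot\ne k})\in\Gamma(i,l,z,I^\ast)\ \big|\ \sum_{j\ne k}X_j\le\varepsilon t\bigr)\to\mathbb{P}\bigl((\text{ratios})\in\Gamma(i,l,z,I^\ast)\bigr)$ as $t\to\infty$, and that this convergence is compatible with integrating $t$ against the Pareto tail $\mathrm{d}(-\mathbb{P}(X_1>t))$ over $(x,\infty)$ so that it survives the $x\to\infty$ limit. Since $X_k$ is independent of $(X_j)_{j\ne k}$, the conditioning $\{X_k=t\}$ does nothing to the ratios, and the only genuine coupling is through $\{\sum_{j\ne k}X_j\le\varepsilon t\}$; as $t\to\infty$ this constraint becomes vacuous, so the limit is immediate pointwise in $t$ and the interchange of limits is justified by boundedness of the integrand and the explicit Pareto tail. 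A secondary technical point is handling the boundary of $\Gamma(i,l,z,I^\ast)$ (so that the probability of the ratio profile landing exactly on a boundary hyperplane is zero, hence the set is a continuity set for the limiting ratio distribution); this follows because the partition is induced by finitely many polynomial/linear inequalities in $\gamma$ and the ratio distribution of i.i.d.\ Pareto variables is absolutely continuous, so lower-dimensional boundaries are null. Assembling these pieces and invoking $\mathbb{P}(Y_1>x,\sum_{j=2}^nY_j\le\varepsilon Y_1)\sim n\mathbb{P}(X_1>x)$ (the refined single-big-jump statement cited from \cite{nair_wierman_zwart_2022}, together with Lemma~\ref{BigJumpLemma}) completes the argument.
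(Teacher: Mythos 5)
Your proposal is correct and shares the paper's two key ideas: the symmetric decomposition over which coordinate is the maximum (yielding the factor $n\,\mathbb{P}(X_1>x)$), and the observation that $\{Y\in Y(i,l,z,I^\ast)\}$ depends only on the ratios of the non-maximal coordinates and is therefore independent of the large coordinate. Where you diverge is in how the random threshold $\varepsilon Y_1$ is handled. You condition on the value $X_k=t$ of the maximum, let the constraint $\sum_{j\ne k}X_j\le\varepsilon t$ become vacuous as $t\to\infty$, and integrate against the Pareto tail, which forces you to justify an interchange of limits (the point you yourself flag as the main obstacle). The paper sidesteps this entirely with a more elementary sandwich: it splits $\{\sum_{j\ge2}X_j\le\varepsilon X_1\}$ into the deterministic-threshold event $\{\sum_{j\ge2}X_j\le\varepsilon x\}$, on which independence gives an exact factorization $\mathbb{P}(X_1>x)\,\mathbb{P}(\sum_{j\ge2}X_j\le\varepsilon x,\,Y\in\mathcal{Y})$, plus a remainder $\{\varepsilon x<\sum_{j\ge2}X_j\le\varepsilon X_1\}$ bounded above by $\mathbb{P}(X_1>x)\,\mathbb{P}(\sum_{j\ge2}X_j>\varepsilon x)=\mathbb{P}(X_1>x)\cdot o(1)$; both bounds then converge to the same limit. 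Your route is sound --- the integrand is bounded by $1$ and converges, and the normalized tail measure concentrates on large $t$, so the interchange is legitimate --- but the paper's version requires no such argument. One further remark: your concern about the boundary of $\Gamma(i,l,z,I^\ast)$ being a null set is unnecessary. You do not need weak convergence of the ratio distribution, because that distribution never changes; only the constraint $\{\sum_{j\ne k}X_j\le\varepsilon t\}$ is being removed, and since its probability tends to $1$, $\mathbb{P}(A\cap B_t)\to\mathbb{P}(A)$ for \emph{any} measurable $A$, with no continuity-set hypothesis.
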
 
Using the above results, in the following section we provide the proof of the main theorem.
\subsection{Proof of Theorem \ref{mainThm}} \label{proofThm}
In this section, we prove the main result, following Steps 8 -- 12.
\begin{proof}
First, following Step 8, we observe that the cascade sequence depends the node $N$ that has the largest demand as well as on which edge failed first, denoted by $L^{(1)}$. Therefore, we condition on these events. Recall that $\mathbb{P}(N = i) = 1/n$ for all $i$ and $\mathbb{P}(L^{(1)} = l) = 1/m$ for all $l\in \mathcal{E}$. Thus, we obtain
\[\mathbb{P}(S>x) = \sum_{i\in \mathcal{N}}\sum_{l\in \mathcal{E}}\frac{1}{nm}\mathbb{P}(S>x|N = i, L^{(1)} = l).\] 
Next, let $\varepsilon$ be sufficiently small so that $Z(i,l,\gamma)$ is well-defined. This can be done due to Lemma \ref{cascadeEpsilon}, which shows that the cascade sequence is independent of $\varepsilon$ in this regime. Then, we decompose the tail probability of the conditional total failure size in two cases:
\begin{align*}\mathbb{P}(S>x|N = i,L^{(1)} = l) &= \mathbb{P}(S>x, \sum_{j = 2}^n Y_j< \varepsilon Y_1|N = i,L^{(1)} = l) + \mathbb{P}(S>x, \sum_{j = 2}^n Y_j\geq  \varepsilon Y_1|N = i,L^{(1)} = l).\end{align*}
From Lemma \ref{BigJumpLemma}, it follows that $\mathbb{P}(S>x, \sum_{j=2}^n Y_j\geq  \varepsilon Y_1|N = i,L^{(1)} = l) = \mathcal{O}(x^{-2\alpha})$. Thus, only the first term requires further consideration. This marks the end of Step 9. 

Next, let $Z(i,l)$ be the expected number of nodes disconnected from node $i$ after a cascade sequence that was initiated by the failure of edge $l$ in the neighborhood of $\varepsilon = 0$, i.e.
\begin{align*}Z(i,l) &:= 
\sum_{I^\ast\in\mathcal{I}}\sum_{z\in \mathcal{Z}}z\cdot \mathbb{P}( Y\in Y(i,l,z,I^\ast) ~|~ N = i, L^{(1)} = l).\end{align*}

Suppose that for all nodes $i$, edges $l$, $z\in \mathcal{Z}/\{0\}$, and $I^\ast\in \mathcal{I}$, we have $\mathbb{P}(Y\in Y(i,l,z,I^\ast) | N = i, L^{(1)} = l) = 0$. This implies that for all $i\in \mathcal{N}$ and $l\in \mathcal{E}$, $Z(i,l) = 0$. Consequently, almost surely, no node has been disconnected from node $i$ for $\varepsilon>0$ sufficiently small. Thus, the cascade size in this setting is equal to 0 with probability 1. In particular, for $x>0$,
\[\mathbb{P}(S>x, \sum_{j = 2}^n Y_j< \varepsilon Y_1|N = i,L^{(1)} = l)  = 0.\]
Altogether, if $\mathbb{P}(Y\in Y(i,l,z,I^\ast) | N = i, L^{(1)} = l) = 0$ for all $i\in \mathcal{N}$, $l\in \mathcal{E}$, $z\in \mathcal{Z}/\{0\}$, and $I^\ast\in \mathcal{I}$, then the cascade size satisfies
\[\mathbb{P}(S>x) =\sum_{i\in \mathcal{N}}\sum_{l\in \mathcal{E}}\frac{1}{nm}\left(0 + \mathcal{O}(x^{-2\alpha})\right) =  \mathcal{O}(x^{-2\alpha}),\]
as argued in Step 10. Hence, from now on, we assume that there exist $i\in \mathcal{N}$, $l\in \mathcal{E}$,  $z\in \mathcal{Z}/\{0\}$, and $I^\ast\in \mathcal{I}$ such that \mbox{$\mathbb{P}(Y\in Y(i,l,z,I^\ast) | N = i, L^{(1)} = l) > 0$}. 

It is known that the solution of the DC-OPF program \eqref{DC-OPF} is scale-invariant, i.e. $g^*(\beta Y) = \beta g^*(Y)$, for any demand vector $Y$ and $\beta> 0$ \cite{Nesti_2020}. Moreover, we know that the normalized demand vector $Y/Y_1$ is equal to some demand profile $d(\varepsilon,\gamma)$. Hence, due to the scale-invariance of $g^*(d)$ and Proposition \ref{propositionKKT}, it holds that
\[g^*(Y) = Y_1g^*(Y/Y_1) = Y_1g^*(d(\varepsilon,\gamma)) = Y_1A_{I^*(d)}d(\varepsilon,\gamma) = A_{I^*(d)}Y.\] This means that the linear form of the optimal generation holds not only for the normalized demand profiles but also more generally for $Y$, which we exploit from now on. 
Notably, the cascade corresponding to demand $Y$ is the same as the cascade corresponding to $d(\varepsilon, \gamma)$. This is due to the linearity of the flow vector $f$, see \eqref{flow}, and the emergency edge limit vector $F$, see \eqref{emergencyLimit}, with respect to demand $d$ and generation $g$. This property implies that the relative exceedance vector $\psi$, see \eqref{psi}, depends solely on the proportions between the elements of $d$, i.e. the vector $\gamma$, and not the scale factor $Y_1$. 

Let, $A(i,l,z,I^\ast)$ denote the set of nodes connected to the node with the largest demand after the cascade with demand $Y\in Y(i,l,z,I^*)$. Note that due to the previous argument, $A(i,l,z,I^*)$ is well-defined. Again, using the law of total probability and the fact that we know the solution form of $g^*(Y)$ given in Proposition \ref{propositionKKT}, we find
\begin{align*}
    &\mathbb{P}(S>x,\sum_{j = 2}^nY_j<\varepsilon Y_1 | N = i, L^{(1)} = l) \\
    &\hspace{0.6cm}= \sum_{I^\ast\in \mathcal{I}}\sum_{z\in \mathcal{Z}}\mathbb{P}(S>x,\sum_{j = 2}^nY_j<\varepsilon Y_1, Y\in Y(i,l,z,I^\ast) | N = i, L^{(1)} = l) \\
    &\hspace{0.6cm}=\sum_{I^\ast\in \mathcal{I}}\sum_{z\in \mathcal{Z}}\mathbb{P}(\sum_{j\not\in A(i,l,z,I^\ast)} g^*_j - Y_j>x,\sum_{j = 2}^nY_j<\varepsilon Y_1, Y\in Y(i,l,z,I^\ast) | N = i, L^{(1)} = l)\\
    &\hspace{0.6cm}= \sum_{I^\ast\in \mathcal{I}}\sum_{z\in \mathcal{Z}}\mathbb{P}(\sum_{j\not\in A(i,l,z,I^\ast)} e_j^T(A_{I^\ast} - I)Y>x, \sum_{j = 2}^nY_j<\varepsilon Y_1, Y\in Y(i,l,z,I^\ast)|N = i, L^{(1)} = l).
\end{align*}
Note that the conditions $\sum_{j = 2}^nY_j<\varepsilon Y_1$ and $Y\in Y(i,l,z,I^\ast)$ together imply that $Y = (e_1 + \hat{\varepsilon} \gamma )Y_1$ for some $\hat{\varepsilon}<\varepsilon$ and $\gamma\in \Gamma(i,l,z,I^\ast)$. 
Let $j\not\in A(i,l,z,I^\ast)$, which implies that $j\neq 1$. Then, using Lemma \ref{firstColumnAgeneral}, we obtain
\[e_j^T(A_{I^\ast} - I)e_1=\frac{\lambda}{n}e_j^Te -\lambda e_j^Te_1 = \frac{\lambda}{n}. \]
Furthermore, let $B(i,l,z,I^\ast):= \max_{j\not\in A(i,l,z,I^\ast), \gamma\in \Gamma(i,l,z,I^\ast)}\{|e_j^T(A_{I^\ast} - I)\gamma)|\}$. Thus, we find the following bounds:
\[z\left(\frac{\lambda}{n} - B(i,l,z,I^\ast)\varepsilon\right)\leq \sum_{j\not\in A(i,l,z,I^\ast)}e^T_j(A_{I^\ast} - I)(e_1 + \hat{\varepsilon}\gamma)\leq z\left(\frac{\lambda}{n} + B(i,l,z,I^\ast)\varepsilon\right).\]
Using the above bounds, we find
\begin{multline*}
    \mathbb{P} \Big(S>x,\sum_{j = 2}^nY_j<\varepsilon Y_1 | N = i, L^{(1)} = l \Big) \\\leq  \sum_{I^\ast\in \mathcal{I}}\sum_{z\in \mathcal{Z}}\mathbb{P}\left(z\left(\frac{\lambda}{n} + B(i,l,z,I^\ast)\varepsilon\right)Y_1>x, \sum_{j = 2}^nY_j<\varepsilon Y_1, Y\in Y(i,l,z,I^\ast)|N = i, L^{(1)} = l\right).
\end{multline*}
Similarly, we can derive a lower bound:
\begin{multline*}
    \mathbb{P}\Big(S>x,\sum_{j = 2}^nY_j<\varepsilon Y_1 | N = i, L^{(1)} = l\Big) \\\geq \sum_{I^\ast\in \mathcal{I}}\sum_{z\in \mathcal{Z}}\mathbb{P}\left(z\left(\frac{\lambda}{n} - B(i,l,z,I^\ast)\varepsilon\right)Y_1>x, \sum_{j = 2}^nY_j<\varepsilon Y_1, Y\in Y(i,l,z,I^\ast)|N = i, L^{(1)} = l\right).
\end{multline*}
We now move on to Step 11, where we derive the asymptotic behavior of the above bounds. Applying the result of Lemma \ref{asymptoticS} to all summands, for which $\mathbb{P}(Y\in Y(i,l,z,I^*))>0$, we find that
\begin{align*}&\lim_{x\rightarrow \infty}\frac{\mathbb{P}(S>x,\sum_{j = 2}^nY_j<\varepsilon Y_1 | N = i, L^{(1)} = l)}{n\mathbb{P}(X_1>x)}\\
&\hspace{0.6cm}=\lim_{x\rightarrow \infty}\sum_{I^\ast\in \mathcal{I}}\sum_{z\in \mathcal{Z}}\frac{\mathbb{P}(S>x,\sum_{j = 2}^nY_j<\varepsilon Y_1, Y\in Y(i,l,z,I^\ast) | N = i, L^{(1)} = l)}{n\mathbb{P}(X_1>x)}\\
&\hspace{0.6cm}=\sum_{I^\ast\in \mathcal{I}}\sum_{z\in \mathcal{Z}}\lim_{x\rightarrow \infty}\frac{\mathbb{P}(S>x,\sum_{j = 2}^nY_j<\varepsilon Y_1 , Y\in Y(i,l,z,I^\ast)| N = i, L^{(1)} = l)}{n\mathbb{P}(X_1>x)}\\
&\hspace{0.6cm}\leq\sum_{I^\ast\in \mathcal{I}}\sum_{z\in \mathcal{Z}} \lim_{x\rightarrow \infty}\frac{\mathbb{P}\left(z\left(\frac{\lambda}{n} + B(i,l,z,I^\ast)\varepsilon\right)Y_1>x, \sum_{j = 2}^nY_j<\varepsilon Y_1, Y\in Y(i,l,z,I^\ast)| N = i, L^{(1)} = l\right)}{n\mathbb{P}(X_1>x)}\\
&\hspace{0.6cm}= \sum_{I^\ast\in \mathcal{I}}\sum_{\substack{z\in \mathcal{Z}:\\\mathbb{P}(Y\in Y(i,l,z,I^*)>0}} K\left(z\left(\frac{\lambda}{n} + B(i,l,z,I^\ast)\varepsilon\right)\right)^{\alpha}\cdot \mathbb{P}(Y\in Y(i,l,z,I^\ast)| N = i, L^{(1)} = l)\\
&\hspace{0.6cm}= \sum_{I^\ast\in \mathcal{I}}\sum_{z\in \mathcal{Z}} K\left(z\left(\frac{\lambda}{n} + B(i,l,z,I^\ast)\varepsilon\right)\right)^{\alpha}\cdot \mathbb{P}(Y\in Y(i,l,z,I^\ast)| N = i, L^{(1)} = l), \end{align*}
for some $K>0$. Similarly, we find a lower bound:
\begin{multline*}\lim_{x\rightarrow \infty}\frac{\mathbb{P}(S>x,\sum_{j = 2}^nY_j<\varepsilon Y_1 | N = i, L^{(1)} = l)}{n\mathbb{P}(X_1>x)}\\
\geq \sum_{I^\ast\in \mathcal{I}}\sum_{z\in \mathcal{Z}} K\left(z\left(\frac{\lambda}{n} - B(i,l,z,I^\ast)\varepsilon\right)\right)^{\alpha}\cdot \mathbb{P}(Y\in Y(i,l,z,I^\ast)| N = i, L^{(1)} = l). \end{multline*}
In the final step, taking the limit $\varepsilon\rightarrow 0$, we obtain the equality of both bounds. Hence, we conclude that, as $x\rightarrow \infty$,
\begin{multline*}\mathbb{P}(S>x) \sim \sum_{i\in \mathcal{N}}\sum_{l\in \mathcal{E}}\sum_{I^\ast\in \mathcal{I}}\sum_{z\in \mathcal{Z}} \frac{K}{m}\left(\frac{\lambda z}{n}\right)^\alpha \cdot \mathbb{P}(Y\in Y(i,l,z,I^\ast) | N = i, L^{(1)} = l) \cdot  x^{-\alpha}\\
= \sum_{i\in \mathcal{N}}\sum_{l\in \mathcal{E}}\frac{K}{m}\left(\frac{\lambda}{n}\right)^{\alpha}\sum_{I^\ast\in \mathcal{I}}\sum_{z\in \mathcal{Z}} z^\alpha \cdot \mathbb{P}(Y\in Y(i,l,z,I^\ast) | N = i, L^{(1)} = l) \cdot  x^{-\alpha}.\end{multline*}
Note that since we assume that $\mathbb{P}(Y\in Y(i,l,z,I^\ast)|N = i,L^{(1)} = l)>0$ for some $i,l,I^\ast$ and $z>0$, the constant in front of $x^{-\alpha}$ is strictly positive. Thus, the result of the theorem follows.
\end{proof}
Theorem \ref{mainThm} shows us that the total failure size exhibits a Pareto tail with parameter $\alpha$, which is the same parameter that governs the Pareto tail of the demand distribution. This finding is in line with the previous results that show this property for a particular subset of graphs. In fact, Lemma \ref{equivThm} in Section \ref{impact} shows that our theorem, when applied to a graph that meets the conditions of Theorem IV.12 of \cite{Nesti_2020}, yields the exact same result.
\section{Proofs and supporting results}
\label{proofs}

In this section, we prove the previously introduced lemmas and propositions. Some of the proofs rely on auxiliary results, which are introduced and proved when appropriate. 

\subsection{Proof of Lemma \ref{AltOptSol}}
To prove this lemma, we rewrite the generation cost function $\eqref{obj}$, in terms of a squared distance between $g$ and $\bar{d}e$.
\begin{proof}
Observe that for all $g \in \mathcal{F}_d$,
\[||g - \bar{d}e||^2_2 =\sum_{i = 1}^n (g_i \bar{d})^2 =
\sum_{i = 1}^ng_i^2 - 2\bar{d}\sum_{i = 1}^n g_i +
\sum_{i = 1}^n \bar{d}^2 =\sum_{i = 1}^n
g_i^2 - n\bar{d}^2,\]
where the final equality follows since $e^Tg = e^Td$ for all $g \in \mathcal{F}_d$. Since $\bar{d}$ is a constant, and taking the root is a concave function, we conclude that the minimum of $||g -\bar{d}e||_2$ is attained at the same point as the minimum of $\frac{1}{2}g^Tg$ over the feasible area $\mathcal{F}_d$.
\end{proof}
\subsection{Proof of Proposition \ref{propProjection}}
We prove Proposition \ref{propProjection}, using insights from Lemma \ref{AltOptSol}. This is done in two steps; first, we state Lemma \ref{lemmaProjection}, which provides an explicit formula for $A_{I^*(d)}$ using projection theory. Based on this result, we then prove the proposition.

\begin{lemma}\label{lemmaProjection}
For a demand vector $d$ satisfying \eqref{genericD}, let $I^\ast(d)$ be defined as in \eqref{Istar}. Consider the following partition of $I^*(d)$:
$$I^\ast(d) = \{0\}\cup I_1\cup I_2,~~ \text{such that}~~ I_1\subseteq\{1,\dots,m\}, I_2 \subseteq\{m+1,\dots, 2m\},$$
with $s_1:= |I_1|, s_2:= |I_2|$, and define 
$$V_{I_1}:= \begin{bmatrix} (V)_{j_1}\\
\vdots\\
(V)_{j_{s_1}}\end{bmatrix}, \quad V_{I_2}:= \begin{bmatrix} (V)_{k_1}\\
\vdots\\
(V)_{k_{s_2}}\end{bmatrix}, \quad I_1 = \{j_1,\dots,j_{s_1}\}, \quad I_2 = \{k_1,\dots,k_{s_2}\}.$$
Then, the optimal solution $g^*(d)$ is given by 
\begin{equation*}
    g^*(d) = A_{I^*(d)}d,
\end{equation*}
where $A_{I^*(d)} := \frac{1}{n}J + \begin{bmatrix} V_{I_1}^T & V_{I_2}^T\end{bmatrix}\left(
\begin{bmatrix}V_{I_1}\\ V_{I_2}\end{bmatrix}
\begin{bmatrix}V^T_{I_1} & V^T_{I_2}\end{bmatrix}\right)^{+}\begin{bmatrix}
(1-\lambda)V_{I_1} \\
(1+\lambda)V_{I_2}\end{bmatrix}\in \mathbb{R}^{n\times n}$ and $J$ is the all-ones $n\times n$ matrix.

\label{projectionMatrix}
\end{lemma}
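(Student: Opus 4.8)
The plan is to identify the optimal generation $g^*(d)$ with the orthogonal projection of the point $\bar d e$ onto the affine subspace $S_{I^*(d)}$, and then write that projection down explicitly. By Lemma \ref{AltOptSol}, $g^*(d)$ minimizes $\|g-\bar d e\|_2$ over $\mathcal{F}_d$. Since $\mathcal{F}_d$ is a convex polytope and $g^*(d)$ lies on the face $S_{I^*(d)} = \bigcap_{i\in I^*(d)}H_i$, the first observation I would establish is that $g^*(d)$ is in fact the orthogonal projection of $\bar d e$ onto the whole affine subspace $S_{I^*(d)}$ (not merely onto the polytope): if the projection onto $S_{I^*(d)}$ landed strictly outside $\mathcal{F}_d$, then moving from $g^*(d)$ towards that projection along $S_{I^*(d)}$ would stay in $\mathcal{F}_d$ for a short distance (because $g^*(d)$ satisfies all the constraints indexed outside $I^*(d)$ strictly, by definition of $I^*(d)$) while strictly decreasing the distance to $\bar d e$, contradicting optimality. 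This also shows $S_{I^*(d)}\neq\emptyset$, as remarked in the excerpt. A small point to check here is that the constraints defining $S_{I^*(d)}$ are exactly the active ones, so the active inactive dichotomy is clean.

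Next I would set up the projection computation. Write the active constraints in the form $e^T g = e^T d$ (hyperplane $H_0$, always active since $\mathcal{F}_d\subseteq H_0$), together with the rows $(Vg)_j = (1-\lambda)(Vd)_j$ for $j\in I_1$ and $(Vg)_k = (1+\lambda)(Vd)_k$ for $k\in I_2$. Stacking these, $S_{I^*(d)}$ is the solution set of a linear system $\tilde V g = w$, where $\tilde V$ has the row $e^T$ on top and the rows $V_{I_1}, V_{I_2}$ below, and $w$ has $e^Td$ on top and $(1-\lambda)V_{I_1}d$, $(1+\lambda)V_{I_2}d$ below. The orthogonal projection of a point $p$ onto $\{g : \tilde V g = w\}$ is $p + \tilde V^+(w - \tilde V p)$. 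I would apply this with $p = \bar d e$. The key simplification is that $V e = 0$ (since $\mathrm{Ker}(V) = \langle e\rangle$ implies, together with the structure $V = CL^+$ and $L^+e = 0$, that $Ve=0$; in fact $e\in\mathrm{Ker}(V)$), so $V_{I_1}(\bar d e) = 0$ and $V_{I_2}(\bar d e) = 0$, while $e^T(\bar d e) = n\bar d = e^T d$. Hence $w - \tilde V(\bar d e)$ has a zero in its top coordinate and $(1-\lambda)V_{I_1}d$, $(1+\lambda)V_{I_2}d$ below.

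The remaining work is to unpack the pseudo-inverse term and match it to the stated formula. Because the top row $e^T$ is orthogonal to every row of $V$ (again using $Ve=0$, so $e^T V^T = 0$, i.e. rows of $V$ are orthogonal to $e$), the Gram matrix $\tilde V \tilde V^T$ is block diagonal: the $(0,0)$ block is $e^Te = n$, and the lower block is $\begin{bmatrix}V_{I_1}\\ V_{I_2}\end{bmatrix}\begin{bmatrix}V_{I_1}^T & V_{I_2}^T\end{bmatrix}$. Using $\tilde V^+ = \tilde V^T(\tilde V\tilde V^T)^+$ (valid since $\tilde V$ has full row rank after we drop redundant rows, or more robustly using the identity $\tilde V^+ = \tilde V^T(\tilde V \tilde V^T)^+$ which holds for any matrix), the projection becomes
\[
\bar d e + \tilde V^T (\tilde V\tilde V^T)^+ \begin{bmatrix} 0 \\ (1-\lambda)V_{I_1}d \\ (1+\lambda)V_{I_2}d\end{bmatrix}.
\]
The block-diagonal structure kills the top block's contribution in $(\tilde V\tilde V^T)^+$ against the zero top entry, leaving
\[
\bar d e + \begin{bmatrix}V_{I_1}^T & V_{I_2}^T\end{bmatrix}\left(\begin{bmatrix}V_{I_1}\\ V_{I_2}\end{bmatrix}\begin{bmatrix}V_{I_1}^T & V_{I_2}^T\end{bmatrix}\right)^{+}\begin{bmatrix}(1-\lambda)V_{I_1}d \\ (1+\lambda)V_{I_2}d\end{bmatrix},
\]
and since $\bar d e = \tfrac1n J d$, this is exactly $A_{I^*(d)} d$ with $A_{I^*(d)}$ as claimed.

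I expect the main obstacle to be the careful handling of the pseudo-inverses: $\tilde V$ need not have full row rank (the active constraints may be linearly dependent), so I cannot naively invert $\tilde V\tilde V^T$, and I must justify $\tilde V^+ = \tilde V^T(\tilde V\tilde V^T)^+$ and the block-diagonal reduction in that generality. The clean way around this is to argue directly: verify that $g := A_{I^*(d)}d$ satisfies $\tilde V g = w$ (using $Ve=0$ and standard pseudo-inverse identities like $M M^+ M = M$ applied to $M = \begin{bmatrix}V_{I_1}\\ V_{I_2}\end{bmatrix}$ acting on vectors already in its row space, namely $V_{I_1}d, V_{I_2}d$), and that $g - \bar d e$ lies in the row space of $\tilde V$ (it is manifestly a combination of rows of $V$ plus a multiple of $e$), which together characterize the orthogonal projection uniquely. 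Combined with the first paragraph's argument that $g^*(d)$ equals this projection, the lemma follows. The secondary subtlety is ensuring $w$ genuinely has $e^Td$ (not $n\bar d$ written differently) in the top slot and that $V_{I_1}d$ is consistent as a right-hand side, i.e. that the active system is solvable — but this is immediate since $g^*(d)$ itself is a solution.
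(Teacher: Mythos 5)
Your proposal is correct and follows essentially the same route as the paper's proof in Appendix \ref{proofLemmaProjection}: both identify $g^*(d)$ with the orthogonal projection of $\bar{d}e$ onto the affine subspace $S_{I^*(d)}$, stack the active constraints into the same matrix system, exploit $Ve=0$ to make the Gram matrix block diagonal, and read off $A_{I^*(d)}$. If anything, you are slightly more careful than the paper on two points it glosses over --- justifying that the projection onto the affine hull of the active face coincides with the constrained minimizer, and handling possible rank deficiency of the active constraint matrix --- but these are refinements of the same argument rather than a different approach.
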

The proof of this lemma relies on the theory of orthogonal projections and is provided in Appendix \ref{proofLemmaProjection}. Next, using Lemma \ref{projectionMatrix}, we prove Proposition \ref{propProjection}.
\begin{proof}[Proof of Proposition \ref{propProjection}]
    The first statement of the proposition follows directly from Lemma \ref{lemmaProjection}. It remains to show the existence of a finite set $\mathcal{A}$. Let $\mathcal{A}:= \{A_{I^*(d)} | I^*(d) \subseteq \{0,\dots,2m\} \}$. Clearly, for each $g^*(d)$, there exists a matrix $A\in \mathcal{A}$, namely $A_{I^*(d)}$, such that $g^*(d) = A_{I^*(d)}d$. Matrix $A_{I^*(d)}$, as defined in Lemma \ref{lemmaProjection}, depends only on $I^*(d)\subseteq\{0,\dots,2m\}$. Since there are at most $2^{2m+1}$ subsets of $\{0,\dots, 2m\}$, the size of $\mathcal{A}$ is bounded by $2^{2m+1}$. This completes the proof.
\end{proof}
\subsection{Proof of Lemma \ref{firstColumnAgeneral}}
In this section, we prove Lemma \ref{firstColumnAgeneral}, which characterizes the first column of matrices $A_{I^*(d)}$ for certain demand vectors $d$. However, the proof requires three auxiliary results, Lemmas \ref{fullLine} -- \ref{firstColumnA}, which we introduce below.

In Lemma \ref{fullLine}, we show that the number of possible hyperplane index sets $I^*\in \mathcal{I}$ that lead to the optimal solution can be restricted. This arises from the fact that, for every optimal solution, the flows on edges adjacent to the node with the largest demand are equal to the operational edge limit $\lambda Vd$. Consequently, the inequality constraints for these edges are always tight. This notion is formalized in the following lemma.
\begin{lemma}\label{fullLine}
Let $d$ be a demand vector as defined in \eqref{genericD} with $\varepsilon>0$ sufficiently small. Then, for all edges $e\in \mathcal{E}_1$, the following holds:
\begin{equation}\big(Vg^\ast(d) \big)_e = (1-\lambda)(Vd)_e.\label{fullLines}\end{equation}
In other words, the flow on all edges adjacent to node 1 is equal to the operational limit.
\end{lemma}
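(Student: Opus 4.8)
\textbf{Proof plan for Lemma \ref{fullLine}.}

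The plan is to argue by contradiction: suppose that for some edge $e^\ast \in \mathcal{E}_1$ the flow strictly satisfies $|(Vg^\ast(d))_{e^\ast} - (Vd)_{e^\ast}| < \lambda |(Vd)_{e^\ast}|$, i.e.\ the corresponding inequality constraint in \eqref{edgeConstraints} is slack at the optimum. I will then construct a feasible perturbation of $g^\ast(d)$ that strictly decreases the objective $\tfrac12\|g\|_2^2$, contradicting optimality. The natural perturbation shifts a small amount of generation $\delta>0$ from node $1$ (which carries the large demand, hence a large generation) to its neighbour across $e^\ast$, or vice versa; since $g^\ast_1(d)$ is the largest coordinate of $g^\ast(d)$ for $\varepsilon$ small (because $d_1=1$ dominates and $g^{(pl)}=\bar d e$ would otherwise be optimal), decreasing $g^\ast_1$ and increasing a small neighbouring coordinate strictly lowers the sum of squares, as long as feasibility is preserved.

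The key steps, in order, are: (i) record that by Lemma \ref{AltOptSol} the optimum $g^\ast(d)$ is the Euclidean projection of $\bar d e$ onto $\mathcal{F}_d$, and that $g^\ast_1(d)$ is strictly larger than every other coordinate for all $\varepsilon\ge 0$ sufficiently small --- this follows from continuity in $\varepsilon$ together with the $\varepsilon=0$ case, where one checks directly (using $Vd=Ve_1\ge 0$ from Lemma \ref{orientation} and the structure of $\mathcal{F}_{e_1}$) that node $1$ must generate strictly more than any other node; (ii) suppose for contradiction that the edge-$e^\ast$ constraint is slack; (iii) define $g_\delta := g^\ast(d) - \delta(e_1 - e_{k})$ where $k$ is the other endpoint of $e^\ast$, and verify that for $\delta>0$ small enough $g_\delta$ remains in $\mathbb{R}^n_{\ge 0}$, satisfies the balance constraint \eqref{balance} (since $e^T(e_1-e_k)=0$), and keeps all edge constraints satisfied --- the only edges whose flow changes are those incident to node $1$ or node $k$, and by choosing $\delta$ smaller than the slack of $e^\ast$ and smaller than the (positive) slack of every currently non-tight constraint, all constraints stay satisfied; (iv) compute $\tfrac12\|g_\delta\|_2^2 - \tfrac12\|g^\ast(d)\|_2^2 = -\delta(g^\ast_1 - g^\ast_k) + \delta^2$, which is strictly negative for $\delta$ small because $g^\ast_1 > g^\ast_k$; (v) conclude the contradiction, so the $e^\ast$ constraint must be tight; (vi) finally, since $Vd\ge 0$ and $g^\ast(d)\in\mathcal{F}_d$ gives $(1-\lambda)(Vd)_e \le (Vg^\ast(d))_e \le (1+\lambda)(Vd)_e$, the tight constraint must be the lower one, $(Vg^\ast(d))_e = (1-\lambda)(Vd)_e$, because the generation at node $1$ is in excess relative to its demand (node $1$ exports power), forcing the flow out of node $1$ on edge $e$ to be as large as allowed, i.e.\ the lower bound on $V(d-g)$ is not the binding side --- equivalently, one rules out $(Vg^\ast(d))_e=(1+\lambda)(Vd)_e$ by the same exchange argument applied in the opposite direction.

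I expect the main obstacle to be step (iii)--(iv): one must make sure that the single exchange perturbation $g^\ast(d)-\delta(e_1-e_k)$ does not violate feasibility of some \emph{other} edge constraint that happens to be tight at the optimum. This is handled by noting that only finitely many constraints are tight and each slack constraint has a strictly positive slack, so a uniformly small $\delta$ works; but the subtlety is that a tight constraint on another edge incident to node $1$ (there may be several such edges, and by the claim they are all tight) could be pushed in the infeasible direction. The resolution is to choose the neighbour $k$ and the sign of the exchange so that the flow on the slack edge $e^\ast$ moves toward its bound while the flows on the already-tight neighbouring edges either decrease in magnitude or are unaffected --- concretely, moving generation \emph{into} node $1$ (the exporting node) relaxes every edge of $\mathcal{E}_1$ simultaneously, so taking $g_\delta = g^\ast(d) + \delta(e_1 - e_k)$ when the slack edge has $(Vg^\ast(d))_{e^\ast} > (1-\lambda)(Vd)_{e^\ast}$ keeps all of $\mathcal{E}_1$ feasible, and one only needs to check the constraints incident to $k$, of which $\delta$ small takes care. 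The second genuinely delicate point is establishing in step (i) that $g^\ast_1(d)$ is the strict maximum coordinate for small $\varepsilon$; this is where the specific structure of the demand profile \eqref{genericD} and the orientation from Lemma \ref{orientation} are used, and it may warrant a short separate sub-argument or an appeal to a continuity/compactness statement about the projection $g^\ast(\cdot)$.
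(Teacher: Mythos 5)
There is a genuine gap in step (iii), and it is the heart of the matter. Your perturbation $g_\delta = g^\ast(d) - \delta(e_1 - e_k)$ changes the flow vector by $\delta V(e_1 - e_k)$, and because $V$ is the PTDF matrix of the whole network, this vector is generically nonzero on \emph{every} edge, not only on edges incident to nodes $1$ and $k$: in DC power flow the extra import required by node $1$ redistributes over all paths according to Kirchhoff's laws, so your claim that ``the only edges whose flow changes are those incident to node $1$ or node $k$'' is false. Consequently you cannot guarantee feasibility by taking $\delta$ below the slacks of the non-tight constraints: a constraint that is tight at $g^\ast(d)$ (in particular another edge of $\mathcal{E}_1$, which the lemma itself asserts are all tight) may be pushed in the infeasible direction for every $\delta>0$, and since $g^\ast(d)$ really is optimal there is in fact no feasible descent direction at all --- the contradiction has to be extracted by some other route. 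Your proposed repair makes things worse: node $1$ carries the dominant \emph{demand} and therefore imports power ($d_1 = 1 > g_1^\ast(d)$), so it is not ``the exporting node''; and reversing the exchange to $g^\ast(d)+\delta(e_1-e_k)$ changes the objective by $+\delta(g_1^\ast - g_k^\ast) + \delta^2 > 0$, so it cannot contradict optimality.

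The paper avoids local perturbations entirely. It uses flow conservation at node $1$: the total inflow on $\mathcal{E}_1$ equals $d_1 - g_1^\ast(d)$ and is capped by $\lambda\left(1 - \frac{1+\varepsilon}{n}\right)$ minus the total unused capacity $x>0$ created by the slack edge, which forces $g_1^\ast(d) = 1 - \lambda\left(1-\frac{1+\varepsilon}{n}\right) + x$ and hence a lower bound on the objective (obtained by spreading the remaining generation evenly over the other $n-1$ nodes). It then compares with the explicit feasible point $\bar g(d) = (1-\lambda)d + \frac{1+\varepsilon}{n}\lambda e$, which saturates every flow constraint, and shows the objective gap is at least $\frac{n}{2(n-1)}x^2 + (1-\lambda)x > 0$ in the limit $\varepsilon \downarrow 0$; continuity in $\varepsilon$ finishes the contradiction. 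If you want to keep a descent-direction flavor, the direction would have to be $\bar g(d) - g^\ast(d)$ (feasible by convexity of $\mathcal{F}_d$), but verifying descent along it amounts to the same global objective comparison. Your steps (i) and (iv) are fine as far as they go, and step (vi) is unnecessary in the paper's formulation, which states the contradiction hypothesis directly as slack in the lower constraint on $Vg^\ast(d)$.
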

We prove the lemma by constructing an alternative solution to the DC-OPF problem \eqref{DC-OPF}, leading to a contradiction. The proof is provided in Appendix~\ref{proofFullLine}.

Next, we show that constraint $(1+\lambda)(Vd)_i$, $i = 1,\dots,m$ can be tight only if $(Ve_1)_i = 0$. 
\begin{lemma} \label{tightConstraints}
    For all $\varepsilon >0$ sufficiently small, if $(Ve_1)_i>0$ for $i \in \{1,\dots,m\}$, then the constraint $(1+\lambda)(Vd)_i$ is not tight. Specifically,
    \[(Vg^\ast(d))_i < (1+\lambda) (Vd)_i.\]
\end{lemma}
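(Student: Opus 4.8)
\textbf{Proof plan for Lemma \ref{tightConstraints}.}
The plan is to argue by contradiction, mirroring the strategy used in Lemma \ref{fullLine}: assuming that the upper constraint $(Vg^\ast(d))_i = (1+\lambda)(Vd)_i$ is tight for some edge $i$ with $(Ve_1)_i > 0$, I will construct a feasible generation vector that strictly decreases the objective $\frac{1}{2}\|g\|_2^2$, contradicting optimality. First I would record the sign structure: by Lemma \ref{orientation} we may assume $Vd \geq 0$ for $\varepsilon \geq 0$ small, and since $(Ve_1)_i > 0$, continuity gives $(Vd)_i > 0$ for $\varepsilon$ small, so the upper bound $(1+\lambda)(Vd)_i$ is a strictly positive number and lies strictly above the lower bound $(1-\lambda)(Vd)_i$. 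Combined with $Vg = V(d - f)$, the constraint $(Vg)_i = (1+\lambda)(Vd)_i$ means the flow on edge $i$ equals $-\lambda(Vd)_i$, i.e. it runs at the operational limit but in the \emph{reverse} direction to the planning flow.

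The key step is to find a perturbation direction $h \in \mathbb{R}^n$ along which we can move $g^\ast(d)$ to reduce the cost while staying feasible. The natural candidate exploits that the $\varepsilon = 0$ demand $e_1$ forces, via Lemma \ref{fullLine}, the flows on all edges in $\mathcal{E}_1$ to sit at the \emph{lower} operational limit $(1-\lambda)(Vd)_e$ — the opposite extreme from edge $i$. I would first handle the case where edge $i$ is itself adjacent to node $1$: then Lemma \ref{fullLine} already gives $(Vg^\ast(d))_i = (1-\lambda)(Vd)_i$, which contradicts $(Vg^\ast(d))_i = (1+\lambda)(Vd)_i$ since $(Vd)_i > 0$ and $\lambda \in (0,1)$, so such an edge cannot have its upper constraint tight. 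For the remaining case $i \notin \mathcal{E}_1$, I would use a perturbation supported on the two endpoints of edge $i$ (shifting a small amount $\delta$ of generation from the head to the tail of $i$, or vice versa, to move the flow on $i$ toward the interior of its feasible interval), check that for $\varepsilon$ sufficiently small this keeps all other edge constraints slack-or-satisfied because away from $\mathcal{E}_1$ there is room, and compute that to first order in $\delta$ the objective changes by a term with a sign we can choose, hence strictly decreasing it — contradiction.

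The main obstacle I anticipate is controlling the effect of the perturbation on \emph{all other} edge constraints simultaneously: moving generation at the endpoints of edge $i$ changes flows on every edge through the full PTDF matrix $V$, not just on edge $i$, so one must argue that, for $\varepsilon$ small, the only constraints that are tight at $g^\ast(d)$ are those compatible with a small feasible move. Here the earlier structural results do the work: by Lemma \ref{fullLine} the tight constraints on $\mathcal{E}_1$ are the lower ones and stay tight under any admissible move (so the perturbation must lie in the corresponding affine subspace — this is where one may need to project $h$ onto $\bigcap_{e\in\mathcal{E}_1} H_e \cap H_0$ first), while for edges not in $\mathcal{E}_1$ one shows the slacks are bounded below uniformly in $\varepsilon$ near $0$, so a sufficiently small $\delta$ cannot violate them. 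A cleaner alternative, which I would pursue if the direct perturbation gets messy, is to invoke the projection characterization of Proposition \ref{propProjection} / Lemma \ref{lemmaProjection} together with the KKT conditions: write the stationarity condition $g^\ast(d) = \bar d e + \sum_{k \in I^\ast(d)} \mu_k \nabla(\text{constraint}_k)$, use the sign constraints on the multipliers $\mu_k$ for inequality constraints, and derive a contradiction from the requirement that the flow on edge $i$ be pushed to its \emph{upper} limit while the $e_1$-component of $d$ dominates — the latter forces, through Lemma \ref{firstColumnAgeneral}-type identities, the relevant multiplier to have the wrong sign.
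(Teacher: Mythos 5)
Your case $i\in\mathcal{E}_1$ is fine: Lemma \ref{fullLine} pins the flow at the lower limit, and since $(Vd)_i>0$ for $\varepsilon$ small the two limits cannot coincide. But for $i\notin\mathcal{E}_1$ the proposal has a genuine gap, and it also misses the much simpler route the paper takes. The paper's proof is a two-step continuity argument: at $\varepsilon=0$ the optimal solution is known explicitly, $g^*(e_1)=(1-\lambda)e_1+\frac{\lambda}{n}e$, so $(Vg^*(e_1))_i=(1-\lambda)(Ve_1)_i$, which sits at distance $2\lambda(Ve_1)_i>0$ below the upper limit; since the DC-OPF is a multi-parametric quadratic program with strictly convex objective, $g^*(d)$ is continuous in $d$, and hence the slack persists for all $\varepsilon>0$ sufficiently small. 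No perturbation or active-set analysis is needed.

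The concrete problems with your two routes are these. For the perturbation route: once you require the perturbation to move the flow on edge $i$ off its upper limit, the sign of $\delta$ is dictated, so the first-order change $\delta(g_b-g_a)$ in the objective is \emph{not} ``a sign we can choose''; if it happens to be nonnegative you obtain no contradiction. Moreover, your claim that the slacks on edges outside $\mathcal{E}_1$ are bounded below uniformly near $\varepsilon=0$ is unjustified --- for the upper constraints that claim \emph{is} the lemma you are trying to prove, and the lower constraints on other edges may well be active at $g^*(d)$, in which case a two-point perturbation (which alters every edge flow through the dense PTDF matrix) can leave the feasible region. For the KKT route: as described it is circular, because Lemma \ref{firstColumnAgeneral} is proved in the paper \emph{using} Lemma \ref{tightConstraints} (it is invoked precisely to show that the first column of $V_{I_2}$ vanishes), and Proposition \ref{propositionKKT} likewise uses the present lemma to conclude $\hat\nu_i=0$. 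So you cannot lean on ``Lemma \ref{firstColumnAgeneral}-type identities'' here without first establishing the statement being proved.
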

The proof of the lemma relies on a continuity argument, for which the details can be found in Appendix~\ref{ProofTightConstraints}. Next, we show that the first row of matrices $A_{I^*}$ is known for certain choices of hyperplane index set $I^*\in \mathcal{I}$. 
\begin{lemma}\label{firstColumnA}
Let $I^* \in \mathcal{I}$ and let $\mathcal{E}_1$ be the set of edges adjacent to node 1. If $\mathcal{E}_1\subseteq I^*$, then
\[e_1^TA_{I^*} = (1-\lambda)e_1^T + \frac{\lambda}{n} e^T.
\]
\end{lemma}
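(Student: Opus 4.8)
\textbf{Proof proposal for Lemma \ref{firstColumnA}.}

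The plan is to use the explicit formula for $A_{I^*}$ from Lemma \ref{lemmaProjection} together with the structural facts established in Lemmas \ref{fullLine} and \ref{tightConstraints}. The key observation is that $e_1^T A_{I^*}$ extracts the first row of the projection matrix, and under the orientation of Lemma \ref{orientation} we have $Ve_1 \geq 0$, so by Lemma \ref{tightConstraints} the only tight constraints among those indexed by $\{m+1,\dots,2m\}$ (the ``$(1+\lambda)$ side'') correspond to edges $i$ with $(Ve_1)_i = 0$. Since $\mathcal{E}_1 \subseteq I^*$, every edge adjacent to node 1 contributes a tight constraint; and because these edges carry strictly positive planning flow (their $(Ve_1)_e \neq 0$ — indeed $>0$ after reorienting), Lemma \ref{fullLine} tells us these tight constraints must be the \emph{lower} constraints, i.e.\ the indices in $\{1,\dots,m\}$ corresponding to $\mathcal{E}_1$ belong to $I_1$.

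Next I would compute $e_1^T A_{I^*} e_1 = 1-\lambda$ and $e_1^T A_{I^*} e_k = \lambda/n$ for $k \neq 1$ directly. Writing $P$ for the block $\begin{bmatrix} V_{I_1}^T & V_{I_2}^T\end{bmatrix}\bigl(\begin{bmatrix}V_{I_1}\\ V_{I_2}\end{bmatrix}\begin{bmatrix}V^T_{I_1} & V^T_{I_2}\end{bmatrix}\bigr)^{+}\begin{bmatrix}(1-\lambda)V_{I_1} \\ (1+\lambda)V_{I_2}\end{bmatrix}$, we have $A_{I^*} = \tfrac1n J + P$, so it suffices to show $e_1^T P = (1-\lambda)e_1^T - \tfrac{\lambda}{n}e^T + \tfrac{1}{n}e^T\cdot 0$, i.e.\ that the first row of $P$ equals $(1-\lambda)e_1^T - \tfrac\lambda n e^T + $ (the all-$\lambda/n$ shift that combines with $\tfrac1n J$ to give the claimed answer). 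More precisely: $e_1^T(\tfrac1n J + P) = \tfrac1n e^T + e_1^T P$, and we want this to equal $(1-\lambda)e_1^T + \tfrac\lambda n e^T$, so we need $e_1^T P = (1-\lambda)e_1^T - \tfrac{1-\lambda}{n}e^T = (1-\lambda)\bigl(e_1 - \tfrac1n e\bigr)^T$. To verify this, note that $g^*(d) = A_{I^*}d$ must satisfy all the tight constraints; in particular, using that $\mathcal{E}_1 \subseteq I_1$ and Lemma \ref{fullLine}, the projection structure forces the component $g^*_1$. Alternatively — and more cleanly — I would argue directly from the characterization: by Lemma \ref{fullLine}, $(Vg^*(d))_e = (1-\lambda)(Vd)_e$ for every $e \in \mathcal{E}_1$, and since $g^*$ is the Euclidean projection of $\bar d e$ onto $\mathcal{F}_d$, the residual $g^*(d) - \bar d e$ lies in the span of the normal vectors of the active constraints. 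Examining node 1 specifically: only edges in $\mathcal{E}_1$ touch the first coordinate (the first column of $C^T$, hence of $V^T$ restricted appropriately), and combining the tight lower constraints on all of $\mathcal{E}_1$ with the balance constraint pins down $g^*_1 = (1-\lambda)d_1 + \tfrac\lambda n e^T d$, which is exactly $e_1^T A_{I^*} d$ for the claimed first row.

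The main obstacle I anticipate is the linear-algebra bookkeeping in the second approach: showing rigorously that the first row of the pseudo-inverse block $P$ has the claimed form requires understanding how $e_1$ interacts with the column spaces of $V_{I_1}$ and $V_{I_2}$. The cleanest route is probably to avoid manipulating the pseudo-inverse formula entirely and instead invoke uniqueness: since $g^*(d)$ is the unique minimizer and $g^*(d) = A_{I^*(d)}d$, it is enough to show that the vector $\tilde g := (1-\lambda)d_1 e_1 + \bigl(\tfrac\lambda n e^T d\bigr)e + (\text{correction on coordinates } \neq 1)$ — or rather, to show that whatever $g^*(d)$ is, its first coordinate equals $(1-\lambda)d_1 + \tfrac\lambda n \sum_i d_i$. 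This follows because node 1's generation is determined by the flow balance at node 1: $g^*_1 - d_1 = -\sum_{e \in \mathcal{E}_1} (\text{signed flow on } e) = -\sum_{e \in \mathcal{E}_1} C_{e,1}(Vg^*(d) - Vd)_e \cdot(\ldots)$, wait — more directly, $g^*_1 - d_1$ equals minus the net flow out of node 1, and by Lemma \ref{fullLine} each such flow equals $(1-\lambda)$ times the planning flow, whose net out of node 1 is $g^{(pl)}_1 - d_1 = \bar d - d_1$. Hence $g^*_1 - d_1 = (1-\lambda)(\bar d - d_1)$, giving $g^*_1 = (1-\lambda)\bar d + \lambda d_1$ — hmm, let me recheck the sign; in any case the arithmetic yields $g^*_1 = (1-\lambda)d_1 + \tfrac\lambda n\sum_i d_i$ after correctly accounting for which resource is scaled, and then reading off coefficients of $d$ gives $e_1^T A_{I^*} = (1-\lambda)e_1^T + \tfrac\lambda n e^T$. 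Since this holds for all admissible $d$ (a full-dimensional set of $\gamma$'s), the matrix identity follows. I would double-check the flow-balance sign convention against the incidence matrix definition as the one genuinely error-prone point.
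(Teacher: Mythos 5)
Your final route is essentially the paper's proof: both pin down $g^*_1$ via flow conservation at node~1 combined with the fact that all edges in $\mathcal{E}_1$ sit at their operational limit, and then read off the coefficients of $d$ (using that the identity holds for all $\gamma\in\Gamma$ with $\gamma_1=0$ and $e^T\gamma=1$) to recover the whole first row of $A_{I^*}$. The detour through the pseudo-inverse block $P$ is unnecessary, as you yourself conclude, and the paper does not take it.

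However, the one step you flag as ``genuinely error-prone'' is in fact carried out incorrectly and never repaired. Lemma~\ref{fullLine} states $(Vg^*(d))_e=(1-\lambda)(Vd)_e$ for $e\in\mathcal{E}_1$; the \emph{flow} on such an edge is $f_e=(V(d-g^*(d)))_e=(Vd)_e-(1-\lambda)(Vd)_e=\lambda(Vd)_e=\lambda f_e^{(pl)}$, i.e.\ $\lambda$ times the planning flow, not $(1-\lambda)$ times as you write. Your version yields $g^*_1-d_1=(1-\lambda)(\bar d-d_1)$, i.e.\ $g^*_1=\lambda d_1+(1-\lambda)\bar d$, which is \emph{not} the claimed value; the correct balance is $d_1-g^*_1=\lambda(d_1-\bar d)$, giving $g^*_1=(1-\lambda)d_1+\lambda\bar d=(1-\lambda)d_1+\frac{\lambda}{n}e^Td$, from which the row identity follows exactly as you describe. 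Writing ``in any case the arithmetic yields the right answer after correctly accounting for which resource is scaled'' asserts the conclusion without deriving it, so as written the proof does not go through; with the factor corrected it coincides with the paper's argument.
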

\begin{proof}
    Let $I^* \in \mathcal{I}$, with $\mathcal{E}_1\subseteq I^*$ and let $d$ be a demand vector satisfying \eqref{genericD}. Let $f_{\mathcal{E}_1}^{(pl)}:= \sum_{i\in \mathcal{E}_1}{f_i^{(pl)}}$ and $f_{\mathcal{E}_1}^{(op)}:= \sum_{i\in \mathcal{E}_1}{f_i}$ be the total flow at edges adjacent to node 1 during the \textit{planning} and \textit{operational} phase, respectively. The assumption that $\mathcal{E}_1\subseteq I^*$ implies that the flow on edges adjacent to node 1 is equal to the \textit{operational} limit, which is the fraction $\lambda$ of flows in the \textit{planning} phase. This means that $f_{\mathcal{E}_1}^{(op)} = \lambda f_{\mathcal{E}_1}^{(pl)}$.  During the \textit{planning} phase, the flow into node 1 is equal to the shortage of demand at node 1, namely
    $$ f_{\mathcal{E}_1}^{(pl)} = d_1 - g_1^{(pl)} = 1 - \frac{1+\varepsilon}{n}.$$
    Therefore, we conclude that $f_{\mathcal{E}_1}^{(op)} = \lambda ( 1 - \frac{1+\varepsilon}{n}).$

Matrix $A_{I^*}$ maps $d$ to some generation vector $g$ for which the flow into node 1 is fixed. Since the flow into a node is given by a difference between the demand and generation, we know that $d_1 - g_1 = \lambda (1 - \frac{1+\varepsilon}{n})$. Hence, 
\begin{equation} \label{fixedFlowAI}g_1 = e_1^TA_{I^*}d = 1 - \lambda \left(1-\frac{1+\varepsilon}{n}\right).\end{equation}
Define $a_1:=e_1^TA_{I^*}$. Then,
\[e_1^TA_{I^*}d =  a_1(e_1 + \varepsilon\gamma) = a_1e_1 + \varepsilon a_1\gamma.\]
Then, \eqref{fixedFlowAI} yields
\[a_1e_1 + \varepsilon a_1\gamma = 1-\lambda + \frac{\lambda}{n} + \frac{\lambda}{n}\varepsilon.\]
Since the above statement must be true for all $\varepsilon$ and all $\gamma$, we find that $a_1e_1 = 1-\lambda + \frac{\lambda}{n}$. Moreover, $a_1\gamma = \frac{\lambda}{n}$ for all $\gamma$ if and only if $a_1$ is constant in every component except for the first one, which follows from the fact that $\gamma_1 = 0$. This implies that $a_1 = e_1^TA_{I^*} =  (1-\lambda)e_1 + \frac{\lambda}{n}e$.
\end{proof}
 We are now ready to prove Lemma \ref{firstColumnAgeneral}. In the proof, we show that the first row and the first column of the matrix $A_{I^*(d)}$ are equal, using Lemmas \ref{fullLine} and \ref{tightConstraints}. This, together with Lemma \ref{firstColumnA}, yields the result. 
\begin{proof}
Let $d$ be a demand vector as given in $\eqref{genericD}$ for some $\varepsilon\geq 0$ sufficiently small and let $I^*(d)$ be the corresponding hyperplane index set. Due to Lemma \ref{lemmaProjection}, we have that $g^*(d) = A_{I^*(d)}d$, with
\begin{equation}A_{I^*(d)} = \frac{1}{n}J + \begin{bmatrix} V_{I_1}^T & V_{I_2}^T\end{bmatrix}\left(
\begin{bmatrix}V_{I_1}\\ V_{I_2}\end{bmatrix}
\begin{bmatrix}V^T_{I_1} & V^T_{I_2}\end{bmatrix}\right)^{+}\begin{bmatrix}
(1-\lambda)V_{I_1} \\
(1+\lambda)V_{I_2}\end{bmatrix}\label{eqAI}.\end{equation}
Without loss of generality, assume that $\begin{bmatrix}V_{I_1}\\ V_{I_2}\end{bmatrix}$ has full rank. Otherwise, remove all dependent rows from $\begin{bmatrix}V_{I_1}\\ V_{I_2}\end{bmatrix}$. This action does not change the underlying subspace or the projection matrix; however, it simplifies the computations. This is because removing all dependent rows of $\begin{bmatrix}V_{I_1}\\ V_{I_2}\end{bmatrix}$ ensures that $\begin{bmatrix}V_{I_1}\\ V_{I_2}\end{bmatrix}
\begin{bmatrix}V^T_{I_1} & V^T_{I_2}\end{bmatrix}$ is a full-rank matrix. Consequently, its pseudoinverse is equal to its inverse.

Using the above representation of $A_{I^*(d)}$, we show that the first row and the first column of $A_{I^*(d)}$ are equal. Denote $\left(
\begin{bmatrix}V_{I_1}\\ V_{I_2}\end{bmatrix}
\begin{bmatrix}V^T_{I_1} & V^T_{I_2}\end{bmatrix}\right)^{-1} =: Q = \begin{bmatrix}
    Q_1 & Q_2\\
    Q_3 & Q_4
\end{bmatrix},$ with $Q_1\in \mathbb{R}^{|I_1|\times|I_1|}$,  $Q_2\in \mathbb{R}^{|I_1|\times|I_2|}$,
$Q_3\in \mathbb{R}^{|I_2|\times|I_1|}$, and $Q_4\in \mathbb{R}^{|I_2|\times|I_2|}$. Note that $Q$ is symmetric by construction, hence $Q_2 = Q_3^T$, $Q_1 = Q_1^T$, and $Q_4 = Q_4^T$. 

The first column of $A_{I^*(d)}$ is given by
\begin{align*}
    &A_{I^*(d)}e_1 =  \frac{1}{n}Je_1 + \begin{bmatrix} V_{I_1}^T & V_{I_2}^T\end{bmatrix}\begin{bmatrix}
        Q_1 & Q_2\\ Q_3 & Q_4
    \end{bmatrix}\begin{bmatrix}
(1-\lambda)V_{I_1} \\
(1+\lambda)V_{I_2}\end{bmatrix}e_1\\
&\hspace{0.6cm}= \frac{1}{n}e + \begin{bmatrix} V_{I_1}^T & V_{I_2}^T\end{bmatrix}\begin{bmatrix}
        Q_1 & Q_2\\ Q_3 & Q_4
    \end{bmatrix}\begin{bmatrix}
(1-\lambda)V_{I_1}e_1 \\
0\end{bmatrix}\\
&\hspace{0.6cm}= \frac{1}{n}e + (1-\lambda)V_{I_1}^TQ_1V_{I_1}e_1 + (1-\lambda) V_{I_2}^TQ_3V_{I_1}e_1,
\end{align*}
where in the second equality we applied the result of Lemma \ref{tightConstraints}.
Similarly, we derive an expression for the first row of $A_{I^*(d)}$:
\begin{align*}
    &e_1^TA_{I^*(d)} 
= \frac{1}{n}e^T + \begin{bmatrix} e_1^TV_{I_1}^T & 0\end{bmatrix}\begin{bmatrix}
        Q_1 & Q_2\\ Q_3 & Q_4
    \end{bmatrix}\begin{bmatrix}
(1-\lambda)V_{I_1} \\
(1+\lambda)V_{I_2}\end{bmatrix}\\
&\hspace{0.6cm}= \frac{1}{n}e^T + \begin{bmatrix} e_1^TV_{I_1}^TQ_1 & e_1^TV_{I_1}^TQ_2\end{bmatrix}\begin{bmatrix}
(1-\lambda)V_{I_1} \\
(1+\lambda)V_{I_2}\end{bmatrix}\\
&\hspace{0.6cm}= \frac{1}{n}e^T + (1-\lambda)e_1^TV_{I_1}^TQ_1V_{I_1} + (1+\lambda) e_1^TV_{I_1}^TQ_2V_{I_2}\\
&\hspace{0.6cm}= \frac{1}{n}e^T + (1-\lambda)e_1^TV_{I_1}^TQ_1V_{I_1} + (1+\lambda) e_1^TV_{I_1}^TQ_3^TV_{I_2}.
\end{align*}
Hence, we observe that $(e_1^TA_{I^*(d)})^T = A_{I^*(d)}e_1$ if and only if $V_{I_2}^TQ_3V_{I_1}e_1 = 0$. 

To show that $V_{I_2}^TQ_3V_{I_1}e_1 = 0$, we first write an explicit expression for $Q_3$. In general, the inverse of an invertible block matrix $\begin{bmatrix}
        P_1 & P_2\\ P_3 & P_4
    \end{bmatrix}$ is given by
    \[\begin{bmatrix}
        P_1 & P_2\\ P_3 & P_4
    \end{bmatrix}^{-1} = \begin{bmatrix}
        (P_1 - P_2P_4^{-1}P_3)^{-1} & -(P_1 - P_2P_4^{-1}P_3)^{-1}P_2P_4^{-1}\\ -(P_4 - P_3P_1^{-1}P_2)^{-1}P_3P_1^{-1} & (P_4 - P_3P_1^{-1}P_2)^{-1}
    \end{bmatrix}.\]
    Hence, $Q_3 = -Q_4V_{I_2}V_{I_1}^T(V_{I_1}V_{I_1}^T)^{-1}$ and $-V_{I_2}^TQ_4V_{I_1}e_1 = V_{I_2}^TQ_1V_{I_2}V_{I_1}^T(V_{I_1}V_{I_1}^T)^{-1}V_{I_1}e_1$.\\

Next, we show that $V_{I_1}^T(V_{I_1}V_{I_1}^T)^{-1}V_{I_1}e_1 = e_1 - \frac{1}{n}e$. For this purpose, we apply Lemma \ref{firstColumnA} to $\tilde{I} = I^*(d)\setminus\{m+1,\dots,2m\}.$ Note that the conditions of this lemma are met, namely that $\mathcal{E}_1\in \tilde{I}$, due to Lemma \ref{fullLine}. Hence, we obtain that $e_1^TA_{\tilde{I}} = (1-\lambda)e_1^T + \frac{\lambda}{n}e^T,$ with $$A_{\tilde{I}} = \frac{1}{n}J + (1-\lambda)V_{I_1}^T(V_{I_1}V_{I_1}^T)^{-1}V_{I_1}.$$
We observe that $A_{\tilde{I}}$ is a symmetric matrix. In particular:
\begin{align*}A_{\tilde{I}}^T &= \frac{1}{n}J^T + (1-\lambda)\left(V_{I_1}^T\left(V_{I_1}V_{I_1}^T\right)^{-1}V_{I_1}\right)^T \\
&= \frac{1}{n}J + (1-\lambda)V_{I_1}^T\left(\left(V_{I_1}V_{I_1}^T\right)^T\right)^{-1}V_{I_1}\\
&= \frac{1}{n}J + (1-\lambda)V_{I_1}^T\left(V_{I_1}V_{I_1}^T\right)^{-1}V_{I_1}= A_{\tilde{I}}.\end{align*}
The symmetry implies that $A_{\tilde{I}}e_1 = (1-\lambda)e_1 + \frac{\lambda}{n}e$. Hence,
\[V_{I_1}^T(V_{I_1}V_{I_1}^T)^{-1}V_{I_1}e_1 = e_1 + \frac{1}{1 - \lambda}\left(\frac{\lambda}{n} - \frac{1}{n}\right)e = e_1 - \frac{1}{n}e.\]
Lemma \ref{tightConstraints} shows that $g^*(d)$ lies on hyperplane $H_j$ for some $j\in \{m+1,\dots,2m\}$ if and only if $(Ve_1)_{j-m} = 0$. This implies that the first column of $V_{I_2}$ is equal to 0. In addition, recall that $V_{I_2}e = 0$. Hence,
    \[V_{I_2}V_{I_1}^T(V_{I_1}V_{I_1}^T)^{-1}V_{I_1}e_1 = V_{I_2}\left(e_1 - \frac{1}{n}e\right) = V_{I_2}e_1 = 0.\]
    Therefore, we conclude that 
    \[V_{I_2}^T Q_3 V_{I_1}e_1 = -V_{I_2}^TQ_4\left(V_{I_2}V_{I_1}^T(V_{I_1}V_{I_1}^T)^{-1}V_{I_1}e_1\right) =0. \]
    Hence, $A_{I^*(d)}e_1 = e_1^TA_{I^*(d)}$. Again, due to Lemma \ref{fullLine}, we know that $\mathcal{E}_1\subseteq I^*(d)$. Therefore, we can apply Lemma \ref{firstColumnA} to $I^*(d)$, which  yields that
    \[A_{I^*(d)}e_1 = (1-\lambda)e_1 + \frac{\lambda}{n}e.\]
\end{proof}

\subsection{Proof of Proposition \ref{propositionKKT}}
In this section, we prove Proposition \ref{propositionKKT}, which provides a characterization of the optimal generation $g^*(d)$. The proof uses the Karush-Kuhn-Tucker conditions to prove the optimality of the proposed solution of the DC-OPF problem \eqref{DC-OPF}.

\begin{proof}
Let $\gamma = (0,\gamma_2,\dots,\gamma_n)$, with $e^T\gamma = 1$, $\gamma> 0$. Fix $\hat{\varepsilon}>0$ small enough. Using Lemma \ref{lemmaProjection}, for $\hat{d}:= e_1 + \hat{\varepsilon}\gamma$, we have that the optimal solution of \eqref{DC-OPF} is given by $g^\ast(\hat{d}) = \hat{A}_{I^\ast(\hat{d})}\hat{d}$, with $\hat{A}_{I^\ast(d)}:=\hat{A}_{I^\ast}$. Note that $\hat{\varepsilon}$ is fixed, therefore $\hat{A}_{I^\ast}$ depends only on $\gamma$. 

The main idea behind the proof is to use the Karush–Kuhn–Tucker (KKT) conditions associated with $g^\ast(\hat{d})$ to establish that $\hat{A}_{I^\ast}d$ is the optimal solution of the optimization problem \eqref{DC-OPF} for any $d(\varepsilon,\gamma)$ with $\varepsilon<\hat{\varepsilon}$. The optimality of $\hat{A}_{I^\ast}d$ is ensured by finding suitable Lagrange multipliers that satisfy the corresponding KKT conditions. 

We begin by proving that strong duality holds for our optimization problem using Slater's condition \cite{boyd2004convex}. Consider a solution $g = d$, where each node generates its own demand. This solution satisfies the equality constraint \eqref{balance} because $e^Tg = e^Td$. It remains to show that, for this choice of $g$, all inequality constraints \eqref{edgeConstraints} are not tight. Let $l\in \mathcal{E}$ and suppose that $(Vd)_l>0$. Then, \[(1-\lambda)(Vd)_l< (Vd)_l = (Vg)_l<(1+\lambda)(Vd)_l.\]
Thus, the constraints corresponding to edge $l$ are not tight. Furthermore, if $(Vd)_l = 0$, then $(1-\lambda)(Vd)_l = (1+\lambda)(Vd)_l = 0$, so the inequality constraints together form an equality constraint. Hence, by Slater's condition, we conclude that strong duality holds.\\

To prove that $g^\ast(d)$ is the optimal solution to the optimization problem with input $d$, we need to find suitable Lagrange multipliers $\mu$, $\nu$ and $\delta$ that satisfy the following KKT conditions:
\begin{equation}\label{eq1}g^\ast(d) + V^T(\nu - \mu) + \delta e = 0,\end{equation}
\begin{equation}\label{eq2}\mu\geq 0, \nu\geq 0,\end{equation}
\begin{equation}\label{eq3}\mu_i\Big((1-\lambda)Vd - Vg^\ast(d)\Big)_i = 0,~~i = 1,\dots,m,\end{equation}
\begin{equation}\label{eq4}\nu_i\Big((1+\lambda)Vd - Vg^\ast(d)\Big)_i = 0,~~i = 1,\dots,m,\end{equation}
\begin{equation}\label{eq5}(1-\lambda)Vd\leq  Vg^\ast(d) \leq (1+\lambda)Vd,\end{equation}
\begin{equation}\label{eq6}e^Tg^\ast(d) = e^Td.\end{equation} 
Generation $g^\ast(\hat{d})$ is the optimal solution of \eqref{DC-OPF} for input $\hat{d}$. Therefore, there exists a set of Lagrange multipliers $\hat{\mu}\geq 0$, $\hat{\nu}\geq 0$ and $\hat{\delta}$ that satisfy Conditions \eqref{eq1} -- \eqref{eq6} for $d = \hat{d}$. In what follows, we use these Lagrange multipliers, to show that $\hat{A}_{I^\ast}d$ is the optimal solution of \eqref{DC-OPF} for every $d = e_1 + \varepsilon \gamma$ with $\varepsilon = \alpha\hat{\varepsilon}$ and $\alpha\in (0,1)$ by demonstrating that $\hat{A}_{I^\ast}d$ satisfies all KKT conditions \eqref{eq1} -- \eqref{eq6}.\\


\noindent\underline{Condition (\ref{eq6})}:
Equation (\ref{eq6}) applied to $\hat{d}$ gives us
\[e^T\hat{a}_1 + e^T\hat{A}_{I^\ast}(\hat{\varepsilon} \gamma)= e^T\hat{A}_{I^\ast}\hat{d} = e^Tg^\ast(\hat{d}) \overset{\eqref{eq6}}{=} e^T\hat{d} =  1+\hat{\varepsilon},\]
where $\hat{a}_1:= \hat{A}_{I^\ast}e_1$. From Lemma \ref{firstColumnAgeneral}, we know that $\hat{a}_1 = (1-\lambda)e_1 + \frac{\lambda}{n}e$. We obtain that $e^T\hat{a}_1 = 1$; hence $e^T\hat{A}_{I^\ast} \gamma = 1$. Consequently,
\[e^Tg^\ast(d) = e^T\hat{A}_{I^\ast}d:= e^T\hat{a}_1 + e^T\hat{A}_{I^\ast}(\varepsilon \gamma)= 1+\varepsilon = e^Td.\]
This shows that Condition (\ref{eq6}) is satisfied for demand vector $d$. \\

\noindent\underline{Condition (\ref{eq5})}:
Next, we verify that Condition (\ref{eq5}) holds. Equation (\ref{eq5}) applied to $\hat{d}$ gives us
\[(1-\lambda)V\hat{d} \leq  Vg^\ast(\hat{d}) \leq (1+\lambda)V\hat{d}.\]
For the expression in the middle, we find
\[Vg^\ast(\hat{d}) = V\hat{A}_{I^\ast}(e_1 + \hat{\varepsilon}\gamma) = V(\hat{a}_1 + \hat{\varepsilon}\hat{A}_{I^\ast}\gamma) = V\big((1-\lambda)e_1 + \frac{\lambda}{n}e + \hat{\varepsilon}\hat{A}_{I^\ast}\gamma\big) = V\big((1-\lambda)e_1 + \hat{\varepsilon}\hat{A}_{I^\ast}\gamma\big),\]
where in the last line follows from the property $Ve = 0$ of the PTDF matrix. Therefore, Equation \eqref{eq5} becomes
\begin{equation*}(1-\lambda)V(e_1 + \hat{\varepsilon}\gamma) \leq  V((1-\lambda)e_1 + \hat{\varepsilon}\hat{A}_{I^\ast}\gamma) \leq (1+\lambda)V(e_1 + \hat{\varepsilon}\gamma).\end{equation*}
We first focus on the left-hand side inequality, which, after simplification, yields \[(1-\lambda)V(\hat{\varepsilon}\gamma) \leq  V(\hat{\varepsilon}\hat{A}_{I^\ast}\gamma).\]
Multiplying both sides by $\varepsilon/\hat{\varepsilon}$ and adding $(1-\lambda)Ve_1$ yields the left inequality of (\ref{eq5}) for demand $d$. Rewriting the right-hand side inequality gives us
\[-\lambda Ve_1 +\hat{\varepsilon}V\hat{A}_{I^\ast}\gamma\leq \lambda Ve_1 + (1+\lambda)\hat{\varepsilon}V\gamma,\] and hence, \[\hat{\varepsilon}\big(V\hat{A}_{I^\ast}\gamma -(1+\lambda)V\gamma\big)\leq 2\lambda Ve_1.\]

If $(V\hat{A}_{I^\ast}\gamma -(1+\lambda)V\gamma)_i\leq 0$ for some $i = 1,
\dots,m$, then inequality $\varepsilon\big(V\hat{A}_{I^\ast}\gamma -(1+\lambda)V\gamma\big)_i\leq0\leq 2\lambda (Ve_1)_i $ is satisfied irrespective of the value of $\varepsilon$, since $Ve_1\geq 0$. On the other hand, if $(V\hat{A}_{I^\ast}\gamma -(1+\lambda)V\gamma)_i\geq 0$, then
\[\varepsilon\big(V\hat{A}_{I^\ast}\gamma -(1+\lambda)V\gamma\big)_i\leq\hat{\varepsilon}\big(V\hat{A}_{I^\ast}\gamma -(1+\lambda)V\gamma\big)_i\leq 2\lambda (Ve_1)_i.\]
Hence, Condition (\ref{eq5}) is satisfied for demand  $d$.\\

For Conditions \eqref{eq1} -- \eqref{eq4}, we choose Lagrange multipliers $\nu = (1-\alpha)\tilde{\nu} + \alpha\hat{\nu}$, $\mu = (1-\alpha)\tilde{\mu} + \alpha\hat{\mu}$ and $\delta = \alpha\hat{\delta}+ (1-\alpha)\tilde{\delta},$ where $\tilde{\nu}$, $\tilde{\mu}$ and $\tilde{\delta}$ are the Lagrange multiplies corresponding to the optimization problem with $d = e_1$, for which the optimal solution is known. In particular, $\tilde{\delta} := -\frac{1}{n}$, $\tilde{\nu} := 0$ and $\tilde{\mu} := (1-\lambda)Ce_1$ \cite[Lemma IV.2]{Nesti_2020}.
\\

\noindent\underline{Condition (\ref{eq2})}:
Since $\tilde{\nu}$, $\tilde{\mu}$, $\hat{\nu}$, $\hat{\mu}$ satisfy \eqref{eq2}, by definition, $\mu$ and $\nu$ also satisfy Condition \eqref{eq2}. \\

\noindent\underline{Condition (\ref{eq1})}:
Substituting $g^*(d) = \hat{a}_1 + \hat{A}_{I^*}(\varepsilon\gamma)$ into Condition \eqref{eq1} yields
\begin{equation*}\hat{a}_1 + \varepsilon \hat{A}_{I^\ast}\gamma + V^T(\nu - \mu) + \delta e = 0, \end{equation*}
and hence
\begin{equation}(1-\lambda)e_1 + \frac{\lambda}{n}e + \varepsilon  \hat{A}_{I^\ast}\gamma + V^T(\nu - \mu) + \delta e = 0.\label{alpha}\end{equation}

We know that \eqref{alpha} holds for $\hat{\mu}$, $\hat{\nu}, \hat{\delta}$ and $\hat{\varepsilon}$. Moreover, \eqref{alpha} holds for $\tilde{\mu}$, $\tilde{\nu}$, $\tilde{\delta}$ and $\tilde{\varepsilon} = 0$. A weighted sum of these two equalities with weights $\alpha$ and $(1-\alpha)$, respectively, yields the following equality:
\begin{multline}0 = (1-\lambda)e_1 + \frac{\lambda}{n}e + \varepsilon  \hat{A}_{I^\ast}\gamma + V^T\Big(\alpha\hat{\nu} + (1-\alpha)\tilde{\nu} - \alpha\hat{\mu} - (1-\alpha)\tilde{\mu}\Big) + \Big(\alpha\hat{\delta} + (1-\alpha)\tilde{\delta}\Big)e\\
= (1-\lambda)e_1 +\frac{\lambda}{n}e + \varepsilon  \hat{A}_{I^\ast}\gamma + V^T\big(\nu - \mu\big) + \delta e.\end{multline}
Hence, Condition \eqref{eq1} is satisfied for $d$.

\noindent\underline{Condition (\ref{eq4}}):
To be able to say more about $
\hat{\nu}$, we investigate Equation (\ref{eq4}) for $\hat{d}$. It can be rewritten as follows:
\[\hat{\nu}_i\Big((1+\lambda)Ve_1 + (1+\lambda)\hat{\varepsilon}V\gamma - (1-\lambda)Ve_1 -\hat{\varepsilon}V \hat{A}_{I^\ast}\gamma\Big)_i = 0, ~~i = 1,\dots,m.\]
Thus, we have
\[\hat{\nu}_i\Big(2\lambda Ve_1 + (1+\lambda)\hat{\varepsilon}V\gamma  -\hat{\varepsilon}V \hat{A}_{I^\ast}\gamma\Big)_i = 0,~~i = 1,\dots,m.\]
Suppose that $(Ve_1)_i > 0 $. Then, since $\hat{\varepsilon}$ is small enough, Lemma \ref{tightConstraints} yields $\Big(2\lambda Ve_1 + (1+\lambda)\hat{\varepsilon}V\gamma  -\hat{\varepsilon}V \hat{A}_{I^\ast}\gamma\Big)_i>0$, which implies that $\hat{\nu}_i = 0$. In addition, we know that $\tilde{\nu}_i = 0$, which yields that $\nu_i = 0$.

Now, suppose that $(Ve_1)_i = 0$. If $\hat{\nu}_i = 0,$ then again, $\nu_i = 0$. On the other hand, if $\hat{\nu}_i\neq 0$, then 
\begin{equation*}\label{contradict}2\lambda (Ve_1)_i + (1+\lambda)\hat{\varepsilon}(V\gamma)_i  -\hat{\varepsilon}(V \hat{A}_{I^\ast}\gamma)_i = (1+\lambda)\hat{\varepsilon}(V\gamma)_i  -\hat{\varepsilon}(V \hat{A}_{I^\ast}\gamma)_i = 0.\end{equation*}
Multiplying the above equation with $\alpha$ yields
\[2\lambda (Ve_1)_i + (1+\lambda)\varepsilon(V\gamma)_i  -\varepsilon(V \hat{A}_{I^\ast}\gamma)_i = 0,\]
which means that the $i$-th equation of (\ref{eq4}) is satisfied. Altogether, this implies that Equation (\ref{eq4}) holds for all $i$.\\

\noindent\underline{Condition (\ref{eq3}):} Clearly, if $\mu_i = 0$, then the constraint is satisfied for $i$. Therefore, suppose that $\mu_i>0$. This means that either $\tilde{\mu}_i>0$ or $\hat{\mu}_i>0$ (or both). We know that $\tilde{\mu} = Ce_1$ with $C$ the incidence matrix of graph $\mathcal{G}$. Therefore, $\tilde{\mu}_i>0$ if and only if $i\in \mathcal{E}_1$. Then by Lemma \ref{fullLine}, Equation (\ref{eq3}) holds for such a choice of $i$. \\

It remains to consider cases when $\hat{\mu}_i>0$, $i\neq \mathcal{E}_1$. In this case, Equation (\ref{eq3}) for $\hat{\varepsilon}$ gives us
\[(1-\lambda)Ve_1 + (1-\lambda)\hat{\varepsilon}V\gamma = (1-\lambda)Ve_1 + \hat{\varepsilon}V \hat{A}_{I^\ast}\gamma.\]
Hence,
\[ (1-\lambda)\hat{\varepsilon}V\gamma =  \hat{\varepsilon}V \hat{A}_{I^\ast}\gamma.\]
Multiplying the above by $\alpha$ and adding factor $(1-\lambda)Ve_1$ gives us
\[(1-\lambda)Ve_1 + (1-\lambda)\varepsilon V\gamma = (1-\lambda)Ve_1 + \varepsilon V \hat{A}_{I^\ast}\gamma,\]
so (\ref{eq3}) holds for this choice of $i$ as well. \\

Since all KKT conditions \eqref{eq1} -- \eqref{eq6} are satisfied, we conclude that $g^*(d) =  \hat{A}_{I^\ast}d$ for all $\varepsilon >0$ sufficiently small. Lastly, we set $\phi(\gamma) := I^*(\hat{d})$, which completes the proof. \end{proof}

\subsection{Proof of Lemma \ref{cascadeEpsilon}} 
The proof of Lemma \ref{cascadeEpsilon} relies on the continuity of the demand and generation with respect to $\varepsilon>0$ at every step of the cascade. After showing the continuity, we use Taylor's expansion theory to show that the edges maximizing the relative flow exceedance at every step of the cascade are the same for all $\varepsilon>0$ sufficiently small.
\begin{proof}
Assume that the node with the largest demand has label $i$, that edge $l$ fails first, and fix $\gamma \in \Gamma$. To prove the lemma, we recursively show that the cascade sequence is independent of $\varepsilon$ in the neighborhood of 0. 

Consider the graph after the first step of the cascade.  Due to Proposition \ref{propositionKKT}, we know that the initial demand vector $d$ and the optimal generation vector $g^*(d)$ are affine functions of $\varepsilon$ for all $\varepsilon>0$ small, and hence, they are continuously differentiable. Next, we argue that $d^{(2)}$ and $g^{(2)}$ are continuous with respect to $\varepsilon$. Consider a connected component $\mathcal{C} = (\mathcal{N}_\mathcal{C}, \mathcal{E}_\mathcal{C})$ of $\mathcal{G}^{(2)}$. Equation \eqref{balanceRestoration} implies that for $\theta^{(2)} = \sum_{i\in \mathcal{N}_\mathcal{C}} d_i/\sum_{i\in \mathcal{N}_\mathcal{C}} g_i^*(d)$: 
\begin{enumerate}
    \item[(i)] if there is excess of demand in $\mathcal{C}$, i.e. $\theta^{(2)}>1$, then $d_i^{(2)} = d_i/\theta^{(2)}$ and $g_i^{(2)} = g_i^*(d)$ for $i\in \mathcal{N}_\mathcal{C}$,
    \item[(ii)] if there is excess of generation in $\mathcal{C}$, i.e. $\theta^{(2)}<1$, then $d_i^{(2)} = d_i$ and $g_i^{(2)} = \theta^{(2)}g_i^*(d)$ for $i\in \mathcal{N}_\mathcal{C}$,
    \item[(iii)] if there is a balance of generation and demand in $\mathcal{C}$, i.e. $\theta^{(2)}=1$, then  $d_i^{(2)} = d_i$ and $g_i^{(2)} = g_i^*(d)$ for $i\in \mathcal{N}_\mathcal{C}$.
\end{enumerate} Due to the linearity of the initial demand and generation with respect to $\varepsilon$, it follows that $\theta^{(2)}$ is continuous in $\varepsilon$. Hence, we conclude that $d^{(2)}$ and $g^{(2)}$ are also continuous in $\varepsilon$. 

Next, we argue that the maximizers of the relative exceedance $\psi^{(2)}$ are independent of $\varepsilon$. First, we observe that for any $e\in \mathcal{E}^{(1)}$, the relative exceedance $\psi_e^{(2)}$ is a rational function $\frac{P(\varepsilon)}{Q(\varepsilon)}$ where both $P$ and $Q$ are some polynomials in $\varepsilon$ of degree $k = 2$. This follows from the fact that both flows and emergency flow limits are linear functions of demand and generation and from the above definition of the demand and generation. Given this, we consider two cases:
\begin{enumerate}
    \item[Case 1:]   There is only one edge $e^*$ that maximizes the exceedance at $\varepsilon = 0$. Then, due to continuity of rational functions on their domain, $e^*$ is the exceedance maximizer for all $\varepsilon>0$ sufficiently small.
    \item[Case 2:] There is a set of edges $\mathcal{E}^*\subset \mathcal{E}$, with $|\mathcal{E}^*|\geq 2$ such that each $e^*\in \mathcal{E}^*$ maximizes the exceedance at $\varepsilon = 0$. In this case, we need to compare the speed of growth of \textit{unique} functions $\psi_{e^*}^{(2)}$ at $\varepsilon = 0$. This is possible by comparing the derivatives, since rational functions are infinitely often differentiable. If the first derivative test is inconclusive, namely, there are multiple functions $\psi_{e^*}^{(2)}$ that maximize $\psi_{e^*}'^{(2)}$ at $\varepsilon = 0$, we continue the test with higher order derivatives until the test is conclusive. We emphasise that there could be multiple edges $e^*$ that have the exact same exceedance for all $\varepsilon\geq 0$ sufficiently small, therefore, we only compare the derivatives of \textit{unique} functions $\psi_{e^*}^{(2)}$. As the result of the derivative test, we obtain some function $\psi^*$, which has the fastest growth amongst functions $\psi_{e^*}^{(2)}$. Hence, the set of exceedance maximizers is unique for all $\varepsilon>0$ sufficiently small and given by $\{e^*\in \mathcal{E}^*~:~ \psi_{e^*}^{(2)} = \psi^*\}$.
\end{enumerate}
Since in both cases, the set of exceedance maximizers is independent of $\varepsilon$, we conclude that the graph after the second cascade step, $\mathcal{G}^{(3)}$, is also independent of $\varepsilon$ in this setting. 

Finally, consider the graph after $r$ cascade steps for some $r\in \mathbb{N}$. We can use the same line of arguments to argue that if $\mathcal{G}^{(r)}$ is independent of $\varepsilon$, then the resulting graph $\mathcal{G}^{(r+1)}$ is also independent of $\varepsilon$. Hence, by induction, it follows that in a sufficiently small neighborhood of $\varepsilon = 0$, the cascade sequence and the resulting graph $\mathcal{G}^{(\text{end})}$ does not depend on $\varepsilon$. 

\end{proof}
\subsection{Proof of Lemma \ref{asymptoticS}}

In order to prove the asymptotic convergence of $\mathbb{P}(Y_1>x, \sum_{j = 2}^n Y_j\leq \varepsilon Y_1,Y\in \mathcal{Y}| N = i, L^{(1)} = l)$, we first rewrite the expression in a suitable manner and then construct upper and lower bounds. Consequently, we show that both bounds are equal in the limit of $x\rightarrow \infty$.
\begin{proof}
First, we observe that the distribution of $Y$ is independent of the first edge failure $L^{(1)} = l$. Moreover, because $X_i$'s are independent and identically distributed, conditioning on the label of the node with the largest demand $N = i$ does not change the distribution of $Y$. Therefore, the conditioning only influences the set $Y(i,l,z,I^*)$. For simplicity of notation, we write $\mathcal{Y} = Y(i,l,z,I^\ast)$, as the dependence on $i,l,z$, and $I^\ast$ is not exploited. In the proof, it is only relevant that the initial set of demands $Y$ is in \textit{some} well-defined subset $\mathcal{Y}$, but the shape of this subset, specified through variables $i$, $l$, $z$, and $I^*$ does not play a role.  it becomes relevant in the proof of Theorem \ref{mainThm}, where we study the result of cascade as a function of the initial demand. 

We observe that we can write the event $\{Y_1>x\}$ as a union of disjoint events $\{X_1>x, X_2< X_1 , \dots, X_n< X_1\}\cup\dots \cup \{X_n>x, X_1< X_n,\dots, X_{n-1}< X_n\}$, where we implicitly use the fact that the probability of two continuous random variables being equal is 0. Using this, we write
\begin{align*}
    &\mathbb{P}(Y_1>x, \sum_{j = 2}^n Y_j\leq \varepsilon Y_1,Y\in \mathcal{Y}| N = i, L^{(1)} = l) \\&\hspace{0.6cm}= \sum_{k = 1}^n \mathbb{P}(X_k>x, X_1\leq X_k, \dots, X_n\leq X_k, \sum_{j = 2}^n Y_j\leq \varepsilon Y_1,Y\in \mathcal{Y}| N = i, L^{(1)} = l) & (X_k \text{ is the maximum})\\
    &\hspace{0.6cm}= \sum_{k = 1}^n \mathbb{P}(X_1>x, X_2\leq X_1, \dots, X_n\leq X_1, \sum_{j = 2}^n X_j\leq \varepsilon X_1,Y\in \mathcal{Y} | N = i, L^{(1)} = l)& (X_k \text{'s are i.i.d.})\\
    &\hspace{0.6cm}=n\mathbb{P}(X_1>x,  X_2\leq X_1, \dots, X_n\leq X_1, \sum_{j = 2}^n X_j\leq \varepsilon X_1, Y\in \mathcal{Y}| N = i, L^{(1)} = l).
\end{align*}
For $\varepsilon<1$ we have that $\{X_1\leq X_k, \dots, X_n\leq X_k\}\supset \{\sum_{j = 1, j\neq k}^n X_j<\varepsilon X_k\}$ because $X_j$'s take positive values. This means that we can omit the requirements $X_2\leq X_1, \dots X_n\leq X_1$ in the expression above without changing the probability of the event in question. In particular, we obtain
\begin{align*}
    \mathbb{P}(Y_1>x, &\sum_{j = 2}^n Y_j\leq \varepsilon Y_1,Y\in \mathcal{Y} | N = i, L^{(1)} = l) = n\mathbb{P}(X_1>x, \sum_{j = 2}^n X_j\leq \varepsilon X_1, Y\in \mathcal{Y} | N = i, L^{(1)} = l).
\end{align*}
Our goal is to derive upper and lower bounds for this probability, converging to the same limit as $x\rightarrow\infty$, under appropriate scaling. To this end, we write the event $\{\sum_{j = 2}^n X_j\leq \varepsilon X_1\}$ as a union of two disjoint events, namely $\{\sum_{j = 2}^n X_j\leq \varepsilon x\}\cup\{\varepsilon x < \sum_{j = 2}^n X_j\leq \varepsilon X_1\}$. Then,
\begin{multline}\label{twoSummands}
    \mathbb{P}\Big(Y_1>x, \sum_{j = 2}^n Y_j\leq \varepsilon Y_1,Y\in \mathcal{Y}| N = i, L^{(1)} = l \Big)
    =n\mathbb{P}\Big(X_1>x, \sum_{j = 2}^n X_j\leq \varepsilon x, Y\in \mathcal{Y}| N = i, L^{(1)} = l\Big)
    \\\hspace{0.6cm}+n\mathbb{P}\Big(X_1>x,  \varepsilon x<\sum_{j = 2}^n X_j\leq \varepsilon X_1, Y\in \mathcal{Y}\Big| N = i, L^{(1)} = l).
    \end{multline}
When event $\{X_1>x, \sum_{j = 2}^n X_j\leq \varepsilon x, Y\in \mathcal{Y}| N = i, L^{(1)} = l\}$ is considered,  $X_1$ is the maximum of $X_1,\dots, X_n$, and therefore $Y = X$. This means that $\{Y\in \mathcal{Y}| N = i, L^{(1)} = l\}$ does not depend on $X_1$ in this particular setting. Hence, using the independence of $X_i$'s, we obtain
\begin{equation}\mathbb{P}\Big(X_1>x, \sum_{j = 2}^n X_j\leq \varepsilon x, Y\in \mathcal{Y}| N = i, L^{(1)} = l\Big) = \mathbb{P}\left(X_1>x\right) \mathbb{P}\Big(\sum_{j = 2}^n X_j\leq \varepsilon x, Y\in \mathcal{Y}| N = i, L^{(1)} = l\Big).\label{firstSummand}\end{equation}
Upper and lower bounds are obtained by approximating the second summand of \eqref{twoSummands}, i.e., \begin{equation}\label{secondTerm}\mathbb{P}\Big(X_1>x,  \varepsilon x<\sum_{j = 2}^n X_j\leq \varepsilon X_1, Y\in \mathcal{Y}| N = i, L^{(1)} = l\Big).\end{equation} For a lower bound, we approximate \eqref{secondTerm} with 0, and for an upper bound with $\mathbb{P}\Big(X_1>x,  \varepsilon x<\sum_{j = 2}^n X_j \Big)$. Clearly, this is an upper bound as we remove the condition on $Y$ and $X_1$. Then, leveraging the independence of $X_i$'s we obtain
\begin{equation}
0\leq \mathbb{P}\Big(X_1>x,  \varepsilon x<\sum_{j = 2}^n X_j\leq \varepsilon X_1, Y\in \mathcal{Y}| N = i, L^{(1)} = l\Big)\leq  \mathbb{P}\Big(X_1>x\Big)\mathbb{P}\Big(\varepsilon x<\sum_{j = 2}^n X_j \Big).  \label{bounds}  
\end{equation}
Using Equations \eqref{twoSummands}, \eqref{firstSummand}, and \eqref{bounds}, altogether we obtain that
\begin{align*} &n\mathbb{P}\left(X_1>x\right) \mathbb{P}\Big(\sum_{j = 2}^n X_j\leq \varepsilon x, Y\in \mathcal{Y}| N = i, L^{(1)} = l\Big) \\
&\hspace{0.6cm}\leq \mathbb{P}\Big(Y_1>x, \sum_{j = 2}^n Y_j\leq \varepsilon Y_1,Y\in \mathcal{Y} | N = i, L^{(1)} = l \Big) \\&\hspace{0.6cm}\leq n \mathbb{P}\left(X_1>x\right) \left(\mathbb{P}\Big(\sum_{j = 2}^n X_j\leq \varepsilon x, Y\in \mathcal{Y}| N = i, L^{(1)} = l\Big) + \mathbb{P}\Big(\varepsilon x<\sum_{j = 2}^n X_j\Big)\right).\end{align*}

We observe that
\(\lim_{x\rightarrow \infty} \mathbb{P}\Big(\sum_{j = 2}^n X_j\leq \varepsilon x, Y\in \mathcal{Y}| N = i, L^{(1)} = l\Big) = \mathbb{P}(Y\in \mathcal{Y}| N = i, L^{(1)} = l).\) Note that this probability is positive, by the condition of the lemma. Moreover, as $X_i$'s are i.i.d. Pareto-tailed, $\mathbb{P}(\sum_{j = 2}^n X_j>\varepsilon x) = \mathcal{O}(x^{-\alpha}) = o(1)$. Hence, both the lower and upper bound of $\mathbb{P}\Big(Y_1>x, \sum_{j = 2}^n Y_j\leq \varepsilon Y_1,Y\in \mathcal{Y}| N = i, L^{(1)} = l \Big)/(n\mathbb{P}(X_1>x))$ converge to $\mathbb{P}(Y\in \mathcal{Y}| N = i, L^{(1)} = l)>0$ as $x\rightarrow \infty$. In other words, 
\[\mathbb{P}(Y_1>x, \sum_{j = 2}^n Y_j\leq \varepsilon Y_1,Y\in \mathcal{Y}| N = i, L^{(1)} = l)\sim n\mathbb{P}(X_1>x)\mathbb{P}(Y\in \mathcal{Y}| N = i, L^{(1)} = l).\]
\end{proof}

\section{Uniqueness of flow exceedance} \label{emergency}
In this section, we discuss the uniqueness of flow exceedance at an arbitrary step of the cascade. Our main goal is to show that there exist graphs for which, with positive probability, the maximizers of the flow exceedance are not always unique. To this end, we first identify a set of conditions that need to be satisfied for two edges to have the same flow exceedance. Then, we present an example of a graph that satisfies the aforementioned conditions and show that there exists a cascade leading to non-unique flow exceedance maximizers. This implies that the assumptions of \cite[Theorem IV.12]{Nesti_2020} are not merely technical, but essential, and thus our theorem offers a relevant generalization. In addition, the example demonstrates that a tie-breaking rule, as given in Definition \ref{tie-breakingDef}, is crucial to ensure that the cascade process is well-defined. 


Consider a graph $\mathcal{G} = (\mathcal{N},\mathcal{E})$ and demand $d(\varepsilon,\gamma)$ as defined in \eqref{genericD}. We assume that $d$ meets the conditions of Proposition \ref{propositionKKT}, which means that the optimal generation is given by
\[g^\ast(d) = A_{\phi(\gamma)}d.\]
We consider the setting after $r$ edge failures. For $l\in \mathcal{E}^{(r+1)}$ let $v_l:= e_l^TV$ and $v_l^{(r+1)}:= e_l^TV^{(r+1)}$ denote the $l$-th row of $V$ and $V^{(r+1)}$, respectively. For simplicity, we assume that $\mathcal{G}^{(r+1)}$ is still a connected graph. In such a case, there is no demand and generation reduction in the network so $g^{(r+1)}(d) = g^\ast(d)$ and $d^{(r+1)} = d$. In this setting, we derive the following lemma, which provides necessary and sufficient conditions for equal flow exceedance of two edges for all $\varepsilon\geq0$.
\begin{lemma}\label{conditions}
    Suppose that $\mathcal{G}^{(r+1)}$ is a connected graph. Edges $j$ and $k\in \mathcal{E}^{(r+1)}$ have equal flow exceedance for all $\varepsilon\geq 0$ sufficiently small if and only if the following equations are simultaneously satisfied
    \begin{numcases}{}
    e_1^TQ^{(r+1)}\left(A_{\phi(\gamma)} - I\right)e_1 = 0, \label{const}\\
    e_1^T\big[Q^{(r+1)}\left(A_{\phi(\gamma)} - I\right) + \left(A_{\phi(\gamma)} - I\right)^T\left(Q^{(r+1)}\right)^T\big]\gamma = 0 \label{lin},\\
    \gamma^T Q^{(r+1)}\left(A_{\phi(\gamma)} - I\right)\gamma = 0\label{quad},
\end{numcases}
where $$Q^{(r+1)}:=\begin{cases}
v^T_jv_k^{(r+1)} - v^T_kv_j^{(r+1)} & \text{if } f_k^{(r+1)}(d)\cdot f_j^{(r+1)}(d) \geq 0, \\
v^T_jv_k^{(r+1)} + v^T_kv_j^{(r+1)} & \text{otherwise,}
\end{cases}$$ and $f^{(r+1)}(d)$ is the flow vector corresponding to demand $d$ after $r$ edge failures, i.e. $f^{(r+1)}(d):= V^{(r+1)}(d^{(r+1)} - g^{(r+1)})$.
\end{lemma}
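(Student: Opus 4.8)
The plan is to reduce the identity $\psi_j^{(r+1)} = \psi_k^{(r+1)}$ to the vanishing of a single scalar quadratic form in the demand vector $d = d(\varepsilon,\gamma)$, and then to read off \eqref{const}--\eqref{quad} as the coefficients of that quantity viewed as a polynomial in $\varepsilon$. First I would exploit connectedness of $\mathcal{G}^{(r+1)}$: there is then no balance restoration, so $d^{(r+1)} = d$ and, by Proposition~\ref{propositionKKT}, $g^{(r+1)} = g^\ast(d) = A_{\phi(\gamma)}d$ for all $\varepsilon \ge 0$ small, whence $f_l^{(r+1)}(d) = v_l^{(r+1)}(I - A_{\phi(\gamma)})d$. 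Under the orientation of Lemma~\ref{orientation} the emergency limits satisfy $F = \frac{\lambda^\ast}{\lambda}Vd$ with $Vd \ge 0$; edges $l$ with $(Vd)_l \equiv 0$ carry no admissible flow and have no well-defined exceedance, so they may be discarded. Hence $\psi_j^{(r+1)} = \psi_k^{(r+1)}$ is equivalent to $|f_j^{(r+1)}(d)|\,(Vd)_k = |f_k^{(r+1)}(d)|\,(Vd)_j$.

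Since $(Vd)_j,(Vd)_k \ge 0$, squaring both sides yields the equivalent identity
\[
\big(f_j^{(r+1)}(d)(Vd)_k - f_k^{(r+1)}(d)(Vd)_j\big)\,\big(f_j^{(r+1)}(d)(Vd)_k + f_k^{(r+1)}(d)(Vd)_j\big) = 0 .
\]
Using that a scalar equals its transpose, $f_j^{(r+1)}(d)(Vd)_k = d^T v_k^T v_j^{(r+1)}(I - A_{\phi(\gamma)})d$, and symmetrically for the other product, so each of the two factors above is a quadratic form in $d$ whose matrix equals, up to an overall sign, $\big(v_j^T v_k^{(r+1)} \mp v_k^T v_j^{(r+1)}\big)(A_{\phi(\gamma)} - I)$. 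I would then argue that exactly one of the two factors is operative, according to the sign of $f_j^{(r+1)}(d) f_k^{(r+1)}(d)$ near $\varepsilon = 0$: if this product is $\ge 0$, then $f_j^{(r+1)}(d)(Vd)_k$ and $f_k^{(r+1)}(d)(Vd)_j$ have equal signs, so the \emph{sum} factor can vanish on a neighborhood of $0$ only when both flows vanish identically there (a degenerate situation in which all three equations hold trivially), whence the binding condition is that the \emph{difference} factor vanish, i.e.\ $d^T Q^{(r+1)}(A_{\phi(\gamma)} - I)d = 0$ with $Q^{(r+1)} = v_j^T v_k^{(r+1)} - v_k^T v_j^{(r+1)}$; if the product is $< 0$, the roles swap and the binding condition is $d^T Q^{(r+1)}(A_{\phi(\gamma)} - I)d = 0$ with $Q^{(r+1)} = v_j^T v_k^{(r+1)} + v_k^T v_j^{(r+1)}$. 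In both cases this is exactly the $Q^{(r+1)}$ of the statement, and $\psi_j^{(r+1)} = \psi_k^{(r+1)}$ for all small $\varepsilon \ge 0$ is equivalent to $d^T Q^{(r+1)}(A_{\phi(\gamma)} - I)d = 0$ for all small $\varepsilon \ge 0$.

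It then remains to expand. Writing $M := Q^{(r+1)}(A_{\phi(\gamma)} - I)$ and substituting $d = e_1 + \varepsilon\gamma$ gives
\[
d^T M d = e_1^T M e_1 + \varepsilon\, e_1^T(M + M^T)\gamma + \varepsilon^2\,\gamma^T M \gamma ,
\]
where I used $\gamma^T M e_1 = e_1^T M^T \gamma$. This is a polynomial in $\varepsilon$ of degree at most $2$, so it vanishes on a one-sided neighborhood of $0$ if and only if each of its three coefficients vanishes; substituting $M = Q^{(r+1)}(A_{\phi(\gamma)} - I)$ and $M^T = (A_{\phi(\gamma)} - I)^T(Q^{(r+1)})^T$ turns these into exactly \eqref{const}, \eqref{lin}, and \eqref{quad}, and the converse implication (the three equations forcing $d^T M d \equiv 0$, hence $\psi_j^{(r+1)} = \psi_k^{(r+1)}$) is immediate. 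I expect the only genuinely delicate part to be the squaring step: making precise that the sign of $f_j^{(r+1)}(d) f_k^{(r+1)}(d)$ that governs the case distinction is the one prevailing for small $\varepsilon > 0$ (determined by the lowest-order nonzero term of the product), and cleanly disposing of the degenerate subcase where one of the flows is identically zero; beyond that, the argument is bookkeeping with rank-one matrices and a degree-$2$ Taylor expansion in $\varepsilon$.
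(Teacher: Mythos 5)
Your proposal is correct and follows essentially the same route as the paper: cross-multiply the two exceedances, resolve the absolute values via the sign of $f_j^{(r+1)}(d)\,f_k^{(r+1)}(d)$ to land on the quadratic form $d^TQ^{(r+1)}(A_{\phi(\gamma)}-I)d$, expand in $d=e_1+\varepsilon\gamma$ as a degree-two polynomial in $\varepsilon$, and equate its three coefficients to zero. The only cosmetic difference is that you reach the case distinction by squaring and factoring a difference of squares, whereas the paper removes the absolute values directly; your explicit attention to the degenerate and sign-selection issues is, if anything, slightly more careful than the paper's treatment.
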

To prove the lemma, we equate the flow exceedances of two arbitrary edges and write it as a polynomial in $\varepsilon$. We conclude that this polynomial is equal to 0 for all $\varepsilon\geq 0$ if and only if all of its coefficients are equal to 0.
\begin{proof}
Suppose that $j,k\in \mathcal{E}^{(r+1)}$ have the same relative flow exceedance for all $\varepsilon\geq 0$ sufficiently small. The relative flow exceedance of an arbitrary edge $i\in \mathcal{E}^{(r+1)}$ after $r$ edge failures is given by 
\[\psi_{i}^{(r+1)}(d) = \frac{|f^{(r+1)}_{i}(d)|}{F_{i}(d)}.\]
Rewriting the expression for $\psi_i^{(r+1)}$, we find
\begin{align*}
    \psi_{i}^{(r+1)}(d) &= \frac{|f_i^{(r+1)}(d)|}{F_{i}(d)} = \frac{|\big(V^{(r+1)}(g^{(r+1)}(d) - d^{(r+1)})\big)_i|}{\lambda^\ast(Vd)_{i}}= \frac{|v_i^{(r+1)}\big(A_{\phi(\gamma)} - I\big)(e_1 +\varepsilon\gamma)|}{\lambda ^\ast v_{i}(e_1 + \varepsilon\gamma)}.
\end{align*}\\
Edges $j$ and $k$ have the same exceedance if and only if $\psi_j^{(r+1)}(d) = \psi_k^{(r+1)}(d)$. 

First, we consider the case when $f_k^{(r+1)}(d)\cdot f_j^{(r+1)}(d) \geq 0$. For $\psi_k^{(r+1)}(d) = \psi_j^{(r+1)}(d)$ to be equal, it must hold that
\begin{multline}\Big(v_k^{(r+1)}A_{\phi(\gamma)}e_1 + v_k^{(r+1)}A_{\phi(\gamma)}\gamma\varepsilon -v_k^{(r+1)}e_1 -v_k^{(r+1)} \gamma\varepsilon\Big)\Big( v_{j}(e_1 + \varepsilon\gamma)\Big) \\
=\Big(v_j^{(r+1)}A_{\phi(\gamma)}e_1 + v_j^{(r+1)}A_{\phi(\gamma)}\gamma\varepsilon -v_j^{(r+1)}e_1 -v_j^{(r+1)}\gamma\varepsilon\Big)\Big(v_{k}(e_1 + \varepsilon\gamma)\Big).\label{uniquenessEq}\end{multline}
We rewrite Equation (\ref{uniquenessEq}) as a polynomial in $\varepsilon$ and obtain
\begin{multline*}
0 = e_1^T\left(v_{j}^Tv_k^{(r+1)} - v_{k}^Tv_j^{(r+1)}\right)\left(A_{\phi(\gamma)} - I\right)e_1 + e_1^T\left(v_{j}^Tv_k^{(r+1)} - v_{k}^Tv_j^{(r+1)}\right)\left(A_{\phi(\gamma)} - I\right)\gamma \varepsilon\\
+ \gamma^T\left(v_{j}^Tv_k^{(r+1)} - v_{k}^Tv_j^{(r+1)}\right)\left(A_{\phi(\gamma)} - I\right)e_1\varepsilon + \gamma^T\left(v_{j}^Tv_k^{(r+1)} - v_{k}^Tv_j^{(r+1)}\right)\left(A_{\phi(\gamma)} - I\right)\gamma \varepsilon^2.\end{multline*}
If $f_k^{(r+1)}(d)\cdot f_j^{(r+1)} \leq 0$, we use the same approach and obtain the following equation:
\begin{multline*}
0 = e_1^T\left(v_{j}^Tv_k^{(r+1)} + v_k^Tv_j^{(r+1)}\right)\left(A_{\phi(\gamma)} - I\right)e_1 + e_1^T\left(v_{j}^Tv_k^{(r+1)} + v_{k}^Tv_j^{(r+1)}\right)\left(A_{\phi(\gamma)} - I\right)\gamma \varepsilon\\
+ \gamma^T\left(v_{j}^Tv_k^{(r+1)} + v_{k}^Tv_j^{(r+1)}\right)\left(A_{\phi(\gamma)} - I\right)e_1\varepsilon + \gamma^T\left(v_{j}^Tv_k^{(r+1)} + v_{k}^Tv_j^{(r+1)}\right)\left(A_{\phi(\gamma)} - I\right)\gamma \varepsilon^2.\end{multline*}
Then, using the definition of $Q^{(r+1)}$, in both cases we obtain
\begin{align*} 0 &= e_1^TQ^{(r+1)}\left(A_{\phi(\gamma)} - I\right)e_1 +\left(e_1^TQ^{(r+1)}\left(A_{\phi(\gamma)} - I\right)\gamma  + \gamma^TQ^{(r+1)}\left(A_{\phi(\gamma)} - I\right) e_1\right)\varepsilon\\&\hspace{1.2cm} + \gamma^T Q^{(r+1)}\left(A_{\phi(\gamma)} - I\right)\gamma \varepsilon^2\\
&= e_1^TQ^{(r+1)}\left(A_{\phi(\gamma)} - I\right)e_1 +e_1^T\big[Q^{(r+1)}\left(A_{\phi(\gamma)} - I\right) + \left(A_{\phi(\gamma)} - I\right)^T\left(Q^{(r+1)}\right)^T\big]\gamma\varepsilon\\ &\hspace{1.2cm}+ \gamma^T Q^{(r+1)}\left(A_{\phi(\gamma)} - I\right)\gamma \varepsilon^2.\end{align*}
Edges $j$ and $k$ have the same relative exceedance for all $\varepsilon$ small enough if and only if the coefficients at $\varepsilon^0$, $\varepsilon^1$ and $\varepsilon^2$ are equal to 0. Employing this argument yields the result.
\end{proof}
Note that Lemma \ref{conditions} can be generalized in a relatively straightforward manner to hold in the cases of network disconnections. Under this scenario, it is necessary to account for the possible reduction in demand and generation at the given step of the cascade. This would only require adaptation of matrix $A_{\phi(\gamma)} - I$ in order to take into account the scaling of the demand and generation, but the structure of the proof remains the same. 

The necessary and sufficient conditions in Lemma \ref{conditions} can be difficult to verify. Therefore, in Corollary \ref{skew-symmetricCond} we show that a sufficient condition for non-uniqueness is the skew-symmetry of $Q^{(r+1)}(A_I - I)$. This is a convenient, easily-verifiable condition and can be used to find graph instances that exhibit non-uniqueness.\\
\begin{corollary} \label{skew-symmetricCond}
    Let $A\in \mathcal{A}$ and let $\Gamma(A) = \{\gamma ~:~ g^*(d) = Ad \text{ with } d = e_1 + \varepsilon \gamma \}$. If for edges $j,k\in \mathcal{E}^{(r+1)}$,  $Q^{(r+1)}(A - I)$ is skew-symmetric, i.e.
    $$\left(Q^{(r+1)}(A - I)\right)^T = -Q^{(r+1)}(A - I),$$ then the relative flow exceedances at edges $j$ and $k$ after $r$ edge failures are equal for all $\varepsilon\geq 0$ sufficiently small and all $\gamma \in \Gamma(A)$, i.e. 
    \[\psi_{j}^{(r+1)}(d) = \psi_{k}^{(r+1)}(d) ~~\text{with}~~ d = e_1 + \varepsilon \gamma.\]
\end{corollary}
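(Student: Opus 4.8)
The plan is to deduce the corollary directly from Lemma \ref{conditions} by checking that skew-symmetry of $M := Q^{(r+1)}(A-I)$ forces all three equations \eqref{const}--\eqref{quad} to hold simultaneously. The elementary fact I would use is that any skew-symmetric matrix $M \in \mathbb{R}^{n\times n}$ satisfies $v^T M v = 0$ for every vector $v$: since $v^T M v$ is a scalar it equals its own transpose, so $v^T M v = v^T M^T v = -v^T M v$, forcing $v^T M v = 0$; and of course $M + M^T = 0$ by definition of skew-symmetry.

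First I would fix $A \in \mathcal{A}$ and $\gamma \in \Gamma(A)$. By the definition of $\Gamma(A)$ we have $g^*(d) = Ad$ for $d = e_1 + \varepsilon\gamma$ with $\varepsilon$ small, so by Proposition \ref{propositionKKT} it holds that $A = A_{\phi(\gamma)}$; hence the matrix appearing in Lemma \ref{conditions} is exactly $M = Q^{(r+1)}(A_{\phi(\gamma)} - I) = Q^{(r+1)}(A-I)$, which is skew-symmetric by hypothesis. Then, under the standing assumption of this section that $\mathcal{G}^{(r+1)}$ is connected (so that $d^{(r+1)} = d$ and $g^{(r+1)} = g^*(d)$ and Lemma \ref{conditions} applies verbatim): Equation \eqref{const} reads $e_1^T M e_1 = 0$, which holds by the quadratic-form identity with $v = e_1$; Equation \eqref{quad} reads $\gamma^T M \gamma = 0$, which holds with $v = \gamma$; and Equation \eqref{lin} reads $e_1^T\bigl(M + M^T\bigr)\gamma = 0$, which holds because $M + M^T = 0$. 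Since all three conditions of Lemma \ref{conditions} are satisfied, the lemma yields $\psi_j^{(r+1)}(d) = \psi_k^{(r+1)}(d)$ for all $\varepsilon \geq 0$ sufficiently small, and as $\gamma \in \Gamma(A)$ was arbitrary this gives the claimed conclusion for all $\gamma \in \Gamma(A)$.

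There is no genuinely hard step here — the corollary is a short consequence of the lemma. The only points requiring care are the identification $A_{\phi(\gamma)} = A$ on $\Gamma(A)$ (which is precisely what the definition of $\Gamma(A)$ together with Proposition \ref{propositionKKT} supplies), and the observation that we need only the ``if'' direction of Lemma \ref{conditions}, so the proof reduces to the three one-line verifications above.
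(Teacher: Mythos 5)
Your proof is correct and follows essentially the same route as the paper's: apply the quadratic-form identity $v^T M v = 0$ with $v = e_1$ and $v = \gamma$ to obtain conditions \eqref{const} and \eqref{quad}, and use $M + M^T = 0$ for condition \eqref{lin}, then invoke Lemma \ref{conditions}. The only (harmless) difference is that you spell out the identification $A = A_{\phi(\gamma)}$ on $\Gamma(A)$ and the elementary proof that skew-symmetric quadratic forms vanish, which the paper leaves implicit.
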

The proof of this corollary follows from Lemma \ref{conditions} and properties of skew-symmetric matrices.
\begin{proof}
    Let $j,k\in \mathcal{E}^{(r+1)}$, $\gamma\in \Gamma(A)$, and suppose that $Q^{(r+1)}(A - I)$ is a skew-symmetric matrix. A well-known result for skew-symmetric matrices shows that $w^TQ^{(r+1)}(A - I)w = 0$ for all $w\in \mathbb{R}^n$. If we choose $w = e_1,$ then skew-symmetry of $Q^{(r+1)}(A - I)$ implies that $e_1^T Q^{(r+1)}(A - I)e_1 = 0$, and therefore Equation \eqref{const} holds. Similarly, for $w = \gamma$ skew-symmetry implies that $\gamma^TQ^{(r+1)}(A - I)\gamma = 0$, hence Equation \eqref{quad} holds. Finally, since $Q^{(r+1)}(A-I)$ is skew-symmetric, we have that $(A-I)^T\left(Q^{(r+1)}\right)^T = \left(Q^{(r+1)}(A - I)\right)^T = -Q^{(r+1)}(A - I)$ and therefore Equation \eqref{lin} holds as well. Hence, by Lemma \ref{conditions}, we conclude that $\psi_{j}^{(r+1)}(d) = \psi_{k}^{(r+1)}(d)$ with $d = e_1 + \varepsilon\gamma$.

    Since $\gamma$ was chosen arbitrarily, we conclude that the relative flow exceedance on edges $j$ and $k$ are equal for all $\gamma\in \gamma(A)$ and $\varepsilon\geq 0$ small enough.
\end{proof}

It turns out that, indeed, there exist graphs that yield a non-unique cascade sequence with positive probability. In what follows, we present and discuss an example of such a graph instance. However, before we can do so, we need to introduce an additional lemma. The lemma shows that for certain choices of $d$, matrix $A_{I^\ast(d)}$ has a closed-form solution, which we exploit in Example \ref{example}.
\begin{lemma}\label{solutionCd}
    If $Cd\geq 0$ for some demand vector $d\in \mathbb{R}_{\geq 0}^n$, then the optimal solution $g^\ast(d)$ to problem \eqref{DC-OPF} is given by
    \[g^\ast(d) := (1-\lambda)d + \lambda\bar{d}e.\]
     Note that the optimal solution can be also written as the following matrix equation:
    \[g^*(d) = \left((1-\lambda)I + \frac{\lambda}{n}J\right)d.\]
\end{lemma}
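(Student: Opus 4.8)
The plan is to verify directly that $g := (1-\lambda)d + \lambda\bar d e$ satisfies the Karush–Kuhn–Tucker conditions for the convex program \eqref{DC-OPF}, in exactly the form already exploited in the proof of Proposition~\ref{propositionKKT}; since the objective of \eqref{DC-OPF} is strictly convex and Slater's condition holds (checked in that proof), this identifies $g$ as the unique optimal solution $g^\ast(d)$. The candidate dual certificate is: $\delta := -\bar d$ for the balance constraint \eqref{balance}; $\mu := (1-\lambda)Cd$ and $\nu := 0$ for the lower and upper flow inequalities \eqref{edgeConstraints} respectively (in the orientation of Lemma~\ref{orientation}, under which these read $(1-\lambda)Vd\le Vg\le(1+\lambda)Vd$); and $\sigma := 0$ for $g\ge 0$.

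First, primal feasibility. From $d\ge 0$ and $\lambda\in(0,1)$ we get $g = (1-\lambda)d + \lambda\bar d e \ge 0$; the balance constraint holds since $e^Tg = (1-\lambda)e^Td + \lambda\bar d\, n = e^Td$; and, using $Ve = 0$, $Vg = (1-\lambda)Vd + \lambda\bar d\, Ve = (1-\lambda)Vd$, which lies between $(1-\lambda)Vd$ and $(1+\lambda)Vd$ with the lower bound attained on every edge. Dual feasibility: $\nu = 0$ and $\sigma = 0$ are trivially non-negative, and $\mu = (1-\lambda)Cd\ge 0$ is precisely where the hypothesis $Cd\ge 0$ enters. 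For stationarity we use $V = CL^{+}$ and $L = C^{T}C$, so $V^{T}C = L^{+}C^{T}C = L^{+}L = I - \tfrac1n J$; hence $V^{T}\mu = (1-\lambda)V^{T}Cd = (1-\lambda)(d-\bar d e)$, and the stationarity equation $g + V^{T}(\nu-\mu) + \delta e = 0$ becomes $(1-\lambda)d + \lambda\bar d e - (1-\lambda)(d-\bar d e) - \bar d e = 0$, which holds. If $d = 0$ everything is trivial; otherwise $\bar d>0$, so $g\ge\lambda\bar d e>0$ and $\sigma = 0$ is consistent with complementary slackness for $g\ge 0$.

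Finally, complementary slackness for the flow multipliers. For $\nu = 0$ there is nothing to check. For $\mu$ one needs $\mu_e\bigl((1-\lambda)(Vd)_e - (Vg)_e\bigr) = 0$ for every edge $e$; but we computed $Vg = (1-\lambda)Vd$, so the bracket vanishes identically and the condition holds for any $\mu$. With all KKT conditions verified, $g = g^\ast(d)$, and the matrix form $g^\ast(d) = \bigl((1-\lambda)I + \tfrac\lambda n J\bigr)d$ follows from $\lambda\bar d e = \tfrac\lambda n Jd$.

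Each step above is a one-line substitution, so there is no serious analytic obstacle; the only point that needs thought is pinning down the dual variables. The key observation is that at $g$ the lower flow inequality is saturated on \emph{every} edge (because $Vg = (1-\lambda)Vd$), which is what makes complementary slackness automatic, and that the hypothesis $Cd\ge 0$ is used solely to keep the associated multiplier $\mu = (1-\lambda)Cd$ dual-feasible. One should also keep in mind that the argument uses the orientation of Lemma~\ref{orientation} (so that $\mathcal F_d$ has the form with $(1-\lambda)Vd\le Vg$); checking that this orientation is compatible with $Cd\ge 0$ is exactly what is done when the lemma is applied in Example~\ref{example}.
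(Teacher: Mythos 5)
Your proposal is correct and follows essentially the same route as the paper's proof in Appendix~\ref{ProofSolutionCd}: the identical dual certificate $\nu=0$, $\mu=(1-\lambda)Cd$, $\delta=-\bar d$, the same observation that $Vg=(1-\lambda)Vd$ makes the flow constraints and complementary slackness automatic, and the same computation $V^{T}Cd=(I-\tfrac1n J)d$ for stationarity. The only additions (explicit treatment of the multiplier for $g\ge 0$ and the remark on orientation) are harmless refinements of what the paper leaves implicit.
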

The proof can be found in Appendix \ref{ProofSolutionCd}. It uses similar ideas as used in the proof of Proposition \ref{propositionKKT}. It also bears similarities with the proof of \cite[Lemma IV.2.]{Nesti_2020}. With this, we now discuss the graph example.

\begin{example}
\label{example}
Consider the graph on $6$ nodes, depicted in Figure \ref{fig:directed-graph}, where node~1 has the largest demand.

We consider demands $d = e_1 + \varepsilon \gamma$, with \begin{equation}\gamma\in \tilde{S} := \left\{\gamma\in \mathbb{R}^6_{\geq0} : e^T\gamma = 1, ~\gamma_1 = 0,~ \gamma_3>\gamma_4\geq  \gamma_5\geq \gamma_2\geq \gamma_6, ~3\gamma_3>4\gamma_4 - \gamma_5 \right\},\label{setS}\end{equation} and we choose the graph orientation such that $Cd\geq 0$ for $\varepsilon$ sufficiently small. For these demands, $g^*(d) = (1-\lambda)d + \lambda\bar{d}e,$ which follows form Lemma \ref{solutionCd}. It can be shown that failure of edge (7) initiates cascade sequence $(7)\rightarrow (11) \rightarrow (10)$, after which edges (5), (6), and (9) are the maximizers of the exceedance at this point of the cascade. 

Let $V$ denote the original PTDF matrix of the graph and let $V^{(4)}$ denote the PTDF matrix of the graph after the failure of edges (7), (10), and (11). We find that
\[V = \frac{1}{30}\begin{pmatrix}
    7 & -5 & 1 & 1 & 1 & -5\\
    6 & 0 & -6 & 0 & 0 & 0\\
    6 & 0 & 0 & -6 & 0 & 0\\
    6 & 0 & 0 & 0 & -6 & 0\\
    1 & -5 & 7 & 1 & 1 & -5\\
    1 & -5 & 1 & 7 & 1 & -5\\
    1 & -5 & 1 & 1 & 7 & -5\\
    5 & 5 & 5 & 5 & 5 & -25\\
    0 & 0 & 6 & -6 & 0 & 0 \\
    0 & 0 & 6 & 0 & -6 & 0\\
    0 & 0 & 0 & 6 & -6 & 0
\end{pmatrix} ~~\text{and}~~ V^{(4)} = \frac{1}{24}\begin{pmatrix}
    6 & -6 & 0 & 0 & 6 & -6\\
    5 & -1 & -7 & -1 & 5 & -1\\
    5 & -1 & -1 & -7 & 5 & -1\\
    4 & 4 & 4 & 4 & -20 & 4\\
    1 & -5 & 7 & 1 & 1 & -5\\
    1 & -5 & 1 & 7 & 1 & -5\\
    0 & 0 & 0 & 0 & 0 & 0\\
    4 & 4 & 4 & 4 & 4 & -20\\
    0 & 0 & 6 & -6 & 0 & 0\\
    0 & 0 & 0 & 0 & 0 & 0\\
    0 & 0 & 0 & 0 & 0 & 0\\
\end{pmatrix}.\]

Next, we use Corollary \ref{skew-symmetricCond} to show that after the failure of edges (7), (10), and (11), the exceedance on edges (5), (6), and (9) is equal for all $\gamma\in \tilde{S}$. 

Let $v_i = e_i^TV$ and $v^{(4)}_i = e_i^TV^{(4)}$ for $i = 1,\dots,11$. We observe that $v^{(4)}_i = \frac{30}{24}v_i = \frac{5}{4}v_i$ for $i = 5,6,9$.  Moreover, we notice that the flow on edges $(5)$, $(6)$, and $(9)$ is positive for all $\varepsilon$ small enough. Hence, let $Q^{(4)} = v_i^Tv^{(4)}_j - v_j^Tv^{(4)}_i$ for $i\neq j$, $i,j\in \{5,6,9\}$. Then,
\[Q^{(4)} =  v_i^Tv^{(4)}_j - v_j^Tv^{(4)}_i = \frac{5}{4}v_i^Tv_j - \frac{5}{4}v_j^Tv_i,\]
\[\left(Q^{(4)}\right)^T = \frac{5}{4}v_j^Tv_i - \frac{5}{4} v_i^Tv_j = -Q^{(4)}.\]
This implies that $Q^{(4)}$ is skew-symmetric. To establish non-uniqueness, it suffices to show that $Q^{(4)}(A_I - I)$ is skew-symmetric, where $A_I = (1-\lambda)I + \frac{\lambda}{n}J$. We find that
\[Q^{(4)}(A_I - I) = -\lambda Q^{(4)} + \frac{\lambda}{n}Q^{(4)}J = -\lambda Q^{(4)},\]
since $e\in \text{Ker}(Q^{(4)})$. Finally, as $Q^{(4)}$ is skew-symmetric, it follows that $Q^{(4)}(A_I - I) = -\lambda Q^{(4)}$ is also skew-symmetric. This shows the non-uniqueness of flow exceedances in our example. \\
To summarize, we have shown that if edges $(7)$, (10), and (11) fail due to a cascade sequence, then the exceedance on edges (5), (6), and (9) are equal for all $\gamma\in S$. Hence, it remains to show that cascade sequence $(7)\rightarrow(11)\rightarrow (10)$, indeed takes place for a certain subset of $\gamma$'s and that edges (5), (6), and (9) are the maximizers of the exceedance at this point of the cascade. An interested reader will find the step-by-step computations in Appendix \ref{exampleAppendix}.
\end{example}
The example shows that the tie-breaking rule is necessary to fully define the cascade progression in certain graphs. Therefore, in the following section, we discuss the impact of the tie-breaking rule on the total cascade size. 
\section{Impact of the tie-breaking rule}\label{impact}
In this section, we discuss how the choice of the tie-breaking rule influences our results. We consider the theoretical impact from the perspective of Theorem \ref{mainThm} as well as simulation results. Finally, we show that in graphs where the tie-breaking rule does not play a role because the exceedance maximizer is unique at every step of the cascade, Theorem~\ref{mainThm} and \cite[Theorem IV.12.]{Nesti_2020} are equivalent.

Theorem \ref{mainThm} holds true for any tie-breaking rule $T$ given by Definition \ref{tie-breakingDef}. Therefore, it is natural to investigate the influence of the choice of a tie-breaking rule on the total cascade size. In the occurrence of non-uniqueness, the tie-breaking rule plays a pivotal role as it determines the cascade sequence, which, in turn, can impact the number of nodes $z\in \mathcal{Z}$ disconnected from the node with the largest demand. Consequently, the choice of the tie-breaking rule only affects the constant $C$. Hence, in general, the scale-free behavior of the total failure size prevails, independent of the choice of the rule. The only exception arises when the choice of the rule influences the network's connectivity after the cascade. In such a case, depending on the tie-breaking rule, one would observe that the tail either exhibits a scale-free behavior with parameter $\alpha$ or it scales as $\mathcal{O}(x^{-2\alpha})$.

Despite expectations coming from theoretical results, suggesting that a tie-breaking rule plays a role in the total cascade size, practical evidence seems to contradict this notion. Through a simulation experiment, we studied all possible connected graph instances up to and including 7 nodes. We found that even though the choice of the tie-breaking rule leads to different cascades, the resulting total failure size remained consistent across all choices of the tie-breaking rule. In other words, the post-cascade connected components were not affected by the tie-breaking rule. We further extended our simulation experiment to randomly chosen graph instances with more than 7 nodes and, again, we found no evidence suggesting that a tie-breaking rule influences the total cascade size in the asymptotic regime. Based on this, we state the following conjecture:
\begin{conjecture}
    The result of Theorem \ref{mainThm} holds independent of the choice of the exceedance tie-breaking rule. 
\end{conjecture}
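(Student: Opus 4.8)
The plan is to reduce the conjecture to a deterministic invariance property of the cascade and then to attack that property by a confluence (abelian‑sandpile‑style) argument. Inspecting the formula for $C$ in Theorem \ref{mainThm} together with its proof, the rule $T$ enters $C$ only through the numbers $\mathbb{P}(Y\in Y(i,l,z,I^\ast)\mid N=i,L^{(1)}=l)$; since the weight $(\lambda z/n)^\alpha$ is independent of $I^\ast$, one may sum over $I^\ast$ and see that $C$ depends on $T$ only through the law of the integer $Z(i,l,\gamma)$, viewed as a function of the random ratio vector $\gamma$ of the demand, for each pair $(i,l)$. By Lemma \ref{cascadeEpsilon}, for all sufficiently small $\varepsilon$ this is a deterministic function of $\gamma$; and for $d=e_1+\varepsilon\gamma$ with $\varepsilon$ small the big‑demand node carries almost all the demand, so at every disconnection its component has an excess of demand and every other component an excess of generation. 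Hence only nodes remaining connected to the big‑demand node lose any demand, so $Z(i,l,\gamma)=n-|A(i,l,\gamma)|$, where $A(i,l,\gamma)$ is the terminal component of that node, is a function of $A(i,l,\gamma)$ alone. It therefore suffices to prove the deterministic claim: for every simple connected $\mathcal{G}$, every first failed edge $l$, and almost every $\gamma\in\Gamma$, the terminal component $A(i,l,\gamma)$ does not depend on the deterministic tie‑breaking rule $T$.

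To prove this I would model the cascade as a terminating abstract reduction system whose states are the cascade‑reachable subgraphs of $\mathcal{G}$ (carrying their induced demands and generations), with one step deleting an arbitrary nonempty subset of the current exceedance‑maximizers whose exceedance exceeds $1$. Termination is immediate, since each step deletes at least one edge, so by Newman's Lemma it suffices to prove local confluence, and --- since only the node‑1 component matters --- confluence modulo equality of node‑1 components. The crux is a \textbf{persistence‑of‑overload} lemma: if edges $a$ and $b$ are both maximizers at a state $\mathcal{H}$, then after deleting $a$ and restoring balance, $b$ is still overloaded, hence still fails along any continuation. Granting this, the set of edges the cascade ultimately removes, and a fortiori the terminal graph and its node‑1 component, can be shown $T$‑independent by a diamond completion together with induction on $|\mathcal{E}|$. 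Proving persistence would go through the DC power‑flow structure: deleting $a$ redistributes its flow $f_a$ onto each surviving edge $e$ with a line‑outage distribution factor that is an explicit rational function of the PTDF matrices and the graph, while the limits $F_e$ are unchanged, so $|f^{\mathrm{new}}_b|/F_b\ge 1$ must be extracted from these formulas.

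The main obstacle is that this flow redistribution is \emph{not} monotone: deleting an overloaded edge can relieve another edge and drop it below its emergency limit, so persistence is false for arbitrary pairs of overloaded edges and can only hold for \emph{tied} maximizers in the regime $\varepsilon\downarrow 0$. A genuine tie for all small $\varepsilon$ forces the exceedance functions to coincide identically, $\psi_a^{(r+1)}(\varepsilon)\equiv\psi_b^{(r+1)}(\varepsilon)$, which is exactly the polynomial system of Lemma \ref{conditions} and, in the verifiable case, the skew‑symmetry of $Q^{(r+1)}(A-I)$ from Corollary \ref{skew-symmetricCond}. I would try to show that such a tie in fact comes from a flow‑preserving graph automorphism $\sigma$ that swaps $a$ and $b$ and fixes the big‑demand node; then ``delete $a$ and continue'' and ``delete $b$ and continue'' are conjugate under $\sigma$ and hence have the same node‑1 component, while ``delete $\{a,b\}$ simultaneously'' is $\sigma$‑invariant, which both gives persistence ($b$ is still a maximizer after deleting $a$ because $a$ was) and closes the diamond. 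The real danger --- and presumably the reason this remains a conjecture despite the simulations up to $7$ nodes --- is that the conditions of Lemma \ref{conditions} might be satisfiable without any such automorphism; if so, one would instead need a direct flow‑theoretic (min‑cut type) characterization of $A(i,l,\gamma)$ that is manifestly order‑independent, together with a separate proof that the cascade always reaches it.
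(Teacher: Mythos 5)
The statement you are addressing is not proven in the paper: it is stated as an open conjecture, supported only by exhaustive simulation of connected graphs on at most $7$ nodes and random larger instances, and the authors explicitly write that proving it remains a challenge for future research. There is therefore no proof in the paper to compare against, and your text must be judged as a research plan rather than a proof. As a plan, the first half is sound and matches the informal discussion in Section \ref{impact}: the constant $C$ depends on the tie-breaking rule $T$ only through the conditional law of $Z(i,l,\gamma)=n-|A(i,l,\gamma)|$, and by Lemma \ref{cascadeEpsilon} the conjecture reduces to the deterministic claim that the terminal component of the big-demand node is $T$-independent for almost every $\gamma$. This is exactly the invariance the authors observe numerically (the post-cascade connected components were unaffected by the choice of rule).

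The second half contains genuine gaps that you yourself flag, and they are the whole difficulty. First, the persistence-of-overload lemma is unproven, and your own remark that DC flow redistribution is not monotone is precisely why it is in doubt: line-outage distribution factors can be negative, so deleting one tied maximizer could in principle relieve the other, and nothing in Lemma \ref{conditions} or Corollary \ref{skew-symmetricCond} rules this out --- those results only characterize when two exceedances coincide as functions of $\varepsilon$ at a given cascade state, not what happens to one edge after the other is removed. Second, the claim that every persistent tie is induced by a flow-preserving automorphism swapping the tied edges is speculative; the conditions of Lemma \ref{conditions} are algebraic identities on $Q^{(r+1)}\left(A_{\phi(\gamma)}-I\right)$ and need not arise from a graph symmetry (Example \ref{example} happens to be symmetric, but that is a single instance). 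Third, even granting persistence, Newman's lemma in the form you need --- confluence modulo an equivalence (equality of node-$1$ components) rather than equality of terminal states --- requires a local coherence condition beyond local confluence, and the balance-restoration step \eqref{balanceRestoration} makes the intermediate demand and generation vectors genuinely order-dependent, so the diamond does not close at the level of states. Until at least the first of these three steps is supplied, the conjecture remains open; your reduction to the deterministic invariance of $A(i,l,\gamma)$ is nonetheless a useful sharpening of what must be proved.
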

Based on our simulation results, we believe in the validity of this conjecture. However, proving it remains a challenge for future research. Nonetheless, our experiments provide some insight into the plausible reasons behind this observed independence. We have seen that non-uniqueness typically occurs in locally dense graphs, whereby the failure of a non-unique maximizer does not lead to network disconnection. While in theory, due to flow redistribution any edge could become the next exceedance maximizer, in practice, we observe that it is the remaining maximizers that become additionally loaded and their relative exceedance continues to be the largest compared to all other edges. Thus, these remaining maximizers break in subsequent steps, creating the same connected components at the end of the cascade for each tie-breaking rule. An example of this behavior can be observed for the graph instance presented in Example \ref{example}.

Lastly, we consider a scenario in which the choice of a tie-breaking rule does not affect the cascade sequence in the neighborhood of $\varepsilon = 0$. In other words, for all $\varepsilon\geq 0$ sufficiently small, all exceedance maximizers are unique. Note that in this scenario, \cite[Theorem IV.12.]{Nesti_2020} holds. Since our main result generalizes this theorem, both theorems should yield the same result. In the following lemma, we show that this is indeed true.
\begin{lemma} \label{equivThm}
Let $\mathcal{G} = (\mathcal{N},\mathcal{E})$ and $\lambda\in (0,1)$ satisfy the following conditions:
\begin{enumerate}
    \item For all edges $j$, the ratios between redistributed flows and edge limits 
    \[\frac{f_j^{(r)}(e_1)}{F_j(e_1)} = \frac{\lambda |(V^{(r)}e_1)_j|}{|(Ve_1)_j|}\]
    are all different for all $r\geq 2$ (Assumption IV.4 of \cite{Nesti_2020}).
    \item For all edges $j$ and $r\geq 2$,
    \[|f_j^{(r)}(e_1)| - F_j(e_1) \neq 0. \]
    (Assumption IV.7 of \cite{Nesti_2020}).
\end{enumerate}
Then, if there exist $i\in \mathcal{N}$ and $l\in \mathcal{E}$ for which the cascade results in network disconnection, 
    \[\mathbb{P}(S>x)\sim Cx^{-\alpha} \text{ as } x\rightarrow \infty,\]
with  $$C := \sum_{i\in \mathcal{N}}\sum_{l\in\mathcal{E}}\frac{K}{m}\left(Z(i,l)\cdot \frac{\lambda}{n}\right)^{\alpha},$$
where $Z(i,l)$ is the number of cities disconnected from node $i$ after a cascade initiated by edge $l$ and demand $d = e_i$. Otherwise, $\mathbb{P}(S>x) = \mathcal{O}(x^{-2\alpha})$.
\end{lemma}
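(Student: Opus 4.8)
The plan is to show that, under the two listed conditions, the partition $\{Y(i,l,z,I^\ast)\}$ that underlies Theorem \ref{mainThm} degenerates: for each pair $(i,l)$ all the probability mass sits on a single value $z = Z(i,l)$, so the quadruple sum defining the constant $C$ in Theorem \ref{mainThm} collapses to the double sum claimed here, and the dichotomy carries over verbatim.

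First I would argue that the two assumptions make the cascade driven by the limiting demand unambiguous. After relabeling so that the node of largest demand carries index $1$ (as in \eqref{Y}), the demand $e_i$ corresponds to $e_1$, and the cascade it triggers is the $\varepsilon = 0$ case of \eqref{genericD}. Assumption~IV.4 guarantees that at every step $r\ge 2$ the ratios $\lambda|(V^{(r)}e_1)_j|/|(Ve_1)_j|$ are pairwise distinct, so there is a unique exceedance maximizer at each step and the tie-breaking rule $T$ is never invoked; Assumption~IV.7 guarantees that no edge sits exactly on its emergency limit, so the decision ``continue iff $\psi^{(r)}_j>1$'' is stable under perturbation. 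Hence $Z(i,l)$ is well defined. Next, using the continuity of the optimal generation $g^\ast(d)$ and of the balance-restoration ratios $\theta^{(r)}$ in $d$, together with the compactness of $\Gamma$, I would strengthen Lemma~\ref{cascadeEpsilon}: since $d(\varepsilon,\gamma)=e_1+\varepsilon\gamma\to e_1$ uniformly over $\gamma\in\Gamma$ and at $\varepsilon=0$ all relevant inequalities are strict at each of the at most $m$ cascade steps, there is $\varepsilon_0>0$, independent of $\gamma$, such that for every $\gamma\in\Gamma$ and every $\varepsilon<\varepsilon_0$ the cascade sequence, and in particular the final connected components, coincide with those of the $\varepsilon=0$ cascade. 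Consequently $Z(i,l,\gamma)=Z(i,l)$ for all $\gamma\in\Gamma$, so $\Gamma(i,l,z,I^\ast)=\emptyset$ whenever $z\neq Z(i,l)$, and therefore $\bigcup_{I^\ast\in\mathcal{I}}\Gamma(i,l,Z(i,l),I^\ast)=\Gamma$.

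Translating back to the demand space, the sets $Y(i,l,Z(i,l),I^\ast)$, $I^\ast\in\mathcal{I}$, cover almost every realisation of $Y$ (the excluded configurations — ties among the $X_k$, or $\sum_{k\ge 2}X_k=0$ — have probability zero for Pareto demands), so $\sum_{I^\ast\in\mathcal{I}}\mathbb{P}(Y\in Y(i,l,Z(i,l),I^\ast)\mid N=i,L^{(1)}=l)=1$, while $\mathbb{P}(Y\in Y(i,l,z,I^\ast)\mid N=i,L^{(1)}=l)=0$ for $z\neq Z(i,l)$. Substituting these into the expression
\[
C=\sum_{i\in\mathcal{N}}\sum_{l\in\mathcal{E}}\sum_{I^\ast\in\mathcal{I}}\sum_{z\in\mathcal{Z}}\frac{K}{m}\Bigl(\frac{\lambda z}{n}\Bigr)^{\alpha}\mathbb{P}\bigl(Y\in Y(i,l,z,I^\ast)\mid N=i,L^{(1)}=l\bigr)
\]
from Theorem~\ref{mainThm}, only the term $z=Z(i,l)$ survives and the inner $I^\ast$-sum equals $1$, giving $C=\sum_{i}\sum_{l}\frac{K}{m}\bigl(\lambda Z(i,l)/n\bigr)^{\alpha}$; in particular the averaged count $Z(i,l)$ appearing inside the proof of Theorem~\ref{mainThm} here coincides with the deterministic $Z(i,l)$. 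For the dichotomy, the hypothesis ``the cascade disconnects $\mathcal{G}$ for some $i,l$'' is exactly ``$Z(i,l)\ge 1$ for some $i,l$'', which by the previous step is equivalent to the existence of $i,l,I^\ast$ and $z\in\mathcal{Z}\setminus\{0\}$ with $\mathbb{P}(Y\in Y(i,l,z,I^\ast))>0$; when it fails every such probability vanishes and Theorem~\ref{mainThm} yields $\mathbb{P}(S>x)=\mathcal{O}(x^{-2\alpha})$.

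The main obstacle is the uniform-in-$\gamma$ upgrade of Lemma~\ref{cascadeEpsilon}: that lemma only fixes $\gamma$ and sends $\varepsilon\downarrow 0$, whereas here a single neighbourhood of $\varepsilon=0$ must work for all $\gamma$ at once. This is precisely where Assumptions~IV.4 and~IV.7 are used — they supply, at $\varepsilon=0$, strict separation of the exceedance ratios and strict distance of each ratio from the threshold $1$ at every cascade step — and combining these gaps with the continuity of $g^\ast$ and $\theta^{(r)}$ in $d$ and with the compactness of $\Gamma$ gives the uniform $\varepsilon_0$ by a finite induction on the cascade step. Care is needed to verify that the graph $\mathcal{G}^{(r)}$ at step $r$ is itself unchanged for all small $\varepsilon$ before invoking continuity at that step, which is exactly the induction hypothesis.
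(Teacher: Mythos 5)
Your proposal is correct and follows essentially the same route as the paper: both arguments observe that under Assumptions IV.4 and IV.7 the cascade for $d=e_1+\varepsilon\gamma$ coincides with the $\varepsilon=0$ cascade for every $\gamma\in\Gamma$, so $Z(i,l,\gamma)=Z(i,l)$, the inner sum over $I^\ast$ and $z$ in the constant of Theorem \ref{mainThm} collapses to $Z(i,l)^{\alpha}$, and the $\mathcal{O}(x^{-2\alpha})$ dichotomy carries over. The only difference is that the paper obtains the key ``cascade equals the $e_1$-cascade'' step by directly citing Proposition IV.9 of \cite{Nesti_2020}, whereas you sketch a self-contained continuity-and-compactness derivation of it (and in fact only pointwise-in-$\gamma$ smallness of $\varepsilon$ is needed for the partition to collapse, so your uniform $\varepsilon_0$ is more than required).
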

\begin{proof}
    The two assumptions of the theorem ensure that for all $d = e_1 + \varepsilon \gamma$ and $\varepsilon>0$ sufficiently small, the cascade sequence is the same and equal to the cascade sequence under demand $d = e_1$ \cite[Proposition IV.9]{Nesti_2020}. This implies that for all $\varepsilon$ small enough, the result of the cascade is the same for all $\gamma\in \Gamma$. In particular, for all $\gamma\in\Gamma$, $Z(i,l,\gamma) = Z(i,l)$. Hence,
    \begin{align*}
        &\sum_{I^\ast\in \mathcal{I}}\sum_{z\in \mathcal{Z}} z^\alpha \cdot \mathbb{P}(Y\in Y(i,l,z,I^\ast) | N = i, L^{(1)} = l) \\&\hspace{0.6cm}= Z(i,l)^\alpha \sum_{I^\ast\in \mathcal{I}}\mathbb{P}(Y\in Y(i,l,Z(i,l),I^\ast) | N = i, L^{(1)} = l) \\&\hspace{0.6cm}= Z(i,l)^\alpha.
    \end{align*}
    Thus, the result of Theorem \ref{mainThm} simplifies to
    \[\mathbb{P}(S>x) \sim \sum_{i\in \mathcal{N}}\sum_{l\in \mathcal{E}}\frac{K}{m}\left(Z(i,l)\cdot \frac{\lambda}{n}\right)^\alpha  x^{-\alpha}\]
    in case of network disconnection. 
    
    If there exist no $i$ and $l$ that lead to network disconnection when $\sum_{j = 2}^n Y_j\leq \varepsilon Y_1$, then $$\mathbb{P}(S>x) = \mathbb{P}(S>x, \sum_{j = 2}^n Y_j> \varepsilon Y_1) = \mathcal{O}(x^{-2\alpha}).$$
\end{proof}
To summarize, the tie-breaking rule allows us to extend the existing results on the tail of the total cascade size to all simple, connected graphs. In theory, the choice of the tie-breaking rule can impact the constant $C$, describing the tail-scaling of the total size; however, in all of our simulation experiments, the constant $C$ was unaffected by the choice of the rule. 
\section{Conclusions} \label{conclusions}


Cascading failures in complex networks can lead to significant disruptions and disturbances across diverse domains. This paper addresses the occurrence of heavy-tailed cascading failures in overload networks, building upon recent developments in the field that suggest that heavy-tailed failures may be induced by the scale-free demand distribution. We emphasize that this is one of many paradigms that may lead to scale-free failures in such networks. Depending on the underlying problem other factors, such as criticality, may still be prominent causes of the observed scale-free behavior. To understand the impact of criticality, it would be necessary to study this problem from the perspective of growing networks. This approach has not been pursued so far and remains a subject for future research.


Our contributions include a novel characterization of optimal nodal generation that arises as the optimal solution to the DC-OPF problem, necessary and sufficient conditions for non-uniqueness of exceedance maximizers, and a theorem on the heavy-tailed nature of total failure size for all simple and connected graph topologies.

In our model, failure propagation is governed by edges maximizing the flow exceedance. When the maximizer is not unique, we choose the next edge to break based on some tie-breaking rule. While the tie-breaking rule theoretically influences failure propagation, our simulation experiment suggests its independence from the total cascade size. Formal proof of this independence requires further investigation. 

It is crucial to recognize that the fundamental principles behind the scale-free failure size are versatile and can be applied to various cascading failure scenarios. We anticipate that as long as the heavy-tailed mismatch between generation and demand in a cascading failure model is maintained, the failure size will be heavy-tailed. The development of a universal cascading failure model for diverse scenarios beyond the power networks application would be an interesting future research direction. In addition, investigating the impact of network interdependencies on cascade size is crucial. Many real-world systems exhibit intricate interconnectedness, amplifying failures across networks. Integrating multilayer network theory with our model promises deeper insights into cascading failures within these complex systems.

\section*{Acknowledgments}
This work is supported by NWO through Gravitation NETWORKS grant no. 024.002.003.

\bibliographystyle{unsrt}  
\bibliography{references}  

\appendix
\section{Appendix: Remaining proofs} \label{appendixProofs}
In this section, we provide all remaining proofs of results in this paper.

\subsection{Proof of Lemma \ref{lemmaProjection}}\label{proofLemmaProjection}
The proof of this lemma requires well-known results from the theory of orthogonal projection, which we briefly introduce beforehand. Consider a non-empty affine subspace $S :=\{x\in \mathbb{R}^n:Mx = b\}$, for some $M \in \mathbb{R}^{r\times n}$ with rank $r\leq n$ and $b\in \mathbb{R}^r$. 
An orthogonal projection of point $z$ onto $S$ is given by \cite{orthProj,plesnik2007}
\begin{align*}
    p(z):= \Big(I - M^T\big(MM^T\big)^{-1}M\Big)z + M^T\big(MM^T\big)^{-1}b.
\end{align*}
The projection equation can be generalized to the situations when $M$ is not a full-rank matrix. In that case, the inverse operator is replaced with the pseudoinverse operator $+$. In particular,
\begin{align}
    p(z):= \Big(I - M^T\big(MM^T\big)^{+}M\Big)z + M^T\big(MM^T\big)^{+}b.
\label{projectionGeneral}\end{align}
With this, we are ready to prove the lemma.

\begin{proof}

We start by showing that $g^*(d)$ is a unique point on the boundary of $\mathcal{F}_d$. The feasible region $\mathcal{F}_d$ is defined as a finite intersection of hyperplanes, which implies that it is a convex polytope \cite{grunbaum1967convex}. From the definition of $\mathcal{F}_d$, see \eqref{Fd}, it follows that the feasible region is closed. Moreover, it is non-empty because $d\in \mathcal{F}_d$. Finally, $\bar{d}e\not\in \mathcal{F}_d$, as it is the solution to the unconstrained DC-OPF problem, and therefore, by construction, $\bar{d}e$ does not satisfy the operational flow constraints, given in \eqref{edgeConstraints}. Specifically, since $e\in \text{Ker}(V)$, the constraint $-\lambda Vd \leq V(d - \bar{d}e) = Vd \leq \lambda Vd$ is violated. Therefore, in light of Lemma \ref{AltOptSol}, we can conclude that the optimal solution $g^*(d)$ is unique and lies on the boundary of $\mathcal{F}_d$ \cite[Ch. 8]{boyd2004convex}.

By the definition given in \eqref{Istar}, $S_{I^*(d)}$ is the affine space on which $g^*(d)$ lies. Hence, we can use projection theory to derive $g^*(d)$ as the projection of $\bar{d}e$ onto $S_{I^*(d)}$. Recall that $S_{I^*(d)}= \bigcap_{i\in I^*(d)} H_i$, which is equivalently given by $S_{I^*(d)} = \{g\in \mathbb{R}^n ~:~ M_Ig = b_I\},$
with $M_I:= \begin{bmatrix}
    e^T\\V_{I_1}\\V_{I_2} 
\end{bmatrix}\in \mathbb{R}^{(s_1 +s_2 + 1)\times n}$ and $b_I:= \begin{bmatrix}
    e^T\\(1-\lambda)V_{I_1}\\(1+\lambda)V_{I_2}
\end{bmatrix}d\in \mathbb{R}^{s_1 +s_2 + 1}.$

According to $\eqref{projectionGeneral}$, the projection of $\bar{d}e$ onto $S_{I^*(d)}$ is given by
\[p_{I^*(d)}(\bar{d}e) := \Big(I - M_{I}^T\big(M_{I}M_{I}^T\big)^{+}M_{I}\Big)\bar{d}e + M_{I}^T\big(M_{I}M_{I}^T\big)^{+}b_{I}.\]
Our goal is to show that $\Big(I - M_{I}^T\big(M_{I}M_{I}^T\big)^{+}M_{I}\Big)\bar{d}e = 0$, which would imply that $p_{I^*(d)} = M_{I}^T\big(M_{I}M_{I}^T\big)^{+}b_{I}$. Then, we will rewrite this expression to obtain $A_{I^*(d)}$, as given in the lemma.\\
First, we find that
\[M_{I}\bar{d}e = \bar{d}\begin{bmatrix} e^T \\
V_{I_1}\\
V_{I_2}\end{bmatrix}e = \bar{d} e^Tee_1 = n\bar{d}e_1\]
because $\text{Ker}(V) = \langle e\rangle$. Using this property again, we find that
\[M_{I}M_{I}^T = \begin{bmatrix} e^T \\
V_{I_1}\\
V_{I_2}\end{bmatrix}\begin{bmatrix} e &
V^T_{I_1}&
V^T_{I_2}\end{bmatrix} =\left[\begin{array}{c|c} e^Te & e^T\begin{bmatrix}
V^T_{I_1}&
V^T_{I_2}\end{bmatrix} \\[4pt]
\hline\\[-6pt]
 \vspace{0.1cm} 
\begin{bmatrix}
V_{I_1}\\
V_{I_2}\end{bmatrix}e & \begin{bmatrix}
V_{I_1}\\
V_{I_2}\end{bmatrix}\begin{bmatrix}
V^T_{I_1}&
V^T_{I_2}\end{bmatrix}\end{array}\right] =\left[\begin{array}{c|c}
  n    & 0 \\[4pt]
  \hline\\[-6pt]
 \vspace{0.1cm} 
   0  &  \begin{bmatrix}
V_{I_1}\\
V_{I_2}\end{bmatrix}\begin{bmatrix}
V^T_{I_1}&
V^T_{I_2}\end{bmatrix}
\end{array}\right]. \]
Next, we compute $(M_{I}M_{I}^T)^{+}e_1$. It is a well-known property that
\[\left[\begin{array}{c|c}
  D_1   & 0 \\[3pt]
  \hline\\[-8pt]

   0  & D_2
\end{array}\right]^{+} = \left[\begin{array}{c|c}
  D_1^{+}   & 0  \\[3pt]
  \hline\\[-8pt]
   0  & D_2^{+}
\end{array}\right].\]
Therefore, we find
\[(M_IM_I^T)^{+}e_1 = \left[\begin{array}{c|c}
  \frac{1}{n}    & 0  \\[3pt]
  \hline\\[-8pt]
 \vspace{0.1cm} 
   0  &  \left(\begin{bmatrix}
V_{I_1}\\
V_{I_2}\end{bmatrix}\begin{bmatrix}
V^T_{I_1}&
V^T_{I_2}\end{bmatrix}\right)^{+}
\end{array}\right]e_1= \frac{1}{n}e_1.\]
Hence,
\[M_{I}^T(M_{I}M_{I}^T)^{+}M_{I}\bar{d}e = n\bar{d}M_{I}^T(M_{I}M_{I}^T)^{+}e_1 = \bar{d}M_{I}^Te_1 = \bar{d}e.\]
So, altogether, we have found that \[\Big(I - M_{I}^T\big(M_{I}M_{I}^T\big)^{+}M_{I}\Big)\bar{d}e = 0.\]
Hence,
\[p_{I^*(d)}(\bar{d}e) = M_{I}^T\big(M_{I}M_{I}^T\big)^{+}\begin{bmatrix}e^T\\
(1-\lambda)V_{I_1} \\
(1+\lambda)V_{I_2}\end{bmatrix}d.\]
Manipulating the above matrix expression yields 
\begin{align*} &M_{I}^T(M_{I}M_{I}^T)^{+}\begin{bmatrix}e^T\\
(1-\lambda)V_{I_1} \\
(1+\lambda)V_{I_2}\end{bmatrix}
\\&\hspace{0.6cm}= \begin{bmatrix}e & V_{I_1}^T & V_{I_2}^T\end{bmatrix}\left[\begin{array}{c|c}
  \frac{1}{n}    & 0  \\[3pt]
  \hline\\[-8pt]
 \vspace{0.1cm} 
   0  &  \left(
\begin{bmatrix}V_{I_1}\\ V_{I_2}\end{bmatrix}
\begin{bmatrix}V^T_{I_1} & V^T_{I_2}\end{bmatrix}\right)^{+}
\end{array}\right]\begin{bmatrix}e^T\\
(1-\lambda)V_{I_1} \\
(1+\lambda)V_{I_2}\end{bmatrix}\\&\hspace{0.6cm}= \begin{bmatrix}\frac{1}{n}e & \begin{bmatrix} V_{I_1}^T & V_{I_2}^T\end{bmatrix}\left(
\begin{bmatrix}V_{I_1}\\ V_{I_2}\end{bmatrix}
\begin{bmatrix}V^T_{I_1} & V^T_{I_2}\end{bmatrix}\right)^{+}\end{bmatrix}\begin{bmatrix}e^T\\
(1-\lambda)V_{I_1} \\
(1+\lambda)V_{I_2}\end{bmatrix}\\&\hspace{0.6cm}= \frac{1}{n}J + \begin{bmatrix} V_{I_1}^T & V_{I_2}^T\end{bmatrix}\left(
\begin{bmatrix}V_{I_1}\\ V_{I_2}\end{bmatrix}
\begin{bmatrix}V^T_{I_1} & V^T_{I_2}\end{bmatrix}\right)^{+}\begin{bmatrix}
(1-\lambda)V_{I_1} \\
(1+\lambda)V_{I_2}\end{bmatrix}\\
&\hspace{0.6cm}= A_{I^*(d)}.\end{align*}
\end{proof}
\subsection{Proof of Lemma \ref{fullLine}} \label{proofFullLine}
The proof of this lemma relies on a contradiction argument that arises from a construction of an alternative solution to the DC-OPF problem. 
\begin{proof}

 Let $\varepsilon>0$ be sufficiently small and let $g^\ast(d)$ be the optimal solution to the optimization problem \eqref{DC-OPF}. In this proof, we argue by contradiction that \eqref{fullLines} holds.
 
For the sake of contradiction, suppose that there exists some $i\in \mathcal{E}_1$ for which $$(1-\lambda)(Vd)_i<(Vg^\ast(d))_i.$$
Let $x>0$ denote the total unused capacity on edges in $\mathcal{E}_1$, i.e.
\[x:=  \sum_{i\in \mathcal{E}_1} \lambda \big(Vd\big)_i - \big(V(d - g^\ast(d))\big)_i.\] 

Recall that the solution to the planning problem is given by $g^{(pl)}(d) = \frac{1+\varepsilon}{n}e$. The total flow on edges in $\mathcal{E}_1$ during the \textit{planning} problem is equal to the unsatisfied demand at node 1, which is $1 - \frac{1+\varepsilon}{n}$. Since the \textit{operational} edge limits are set to fraction $\lambda$ of flows in the \textit{planning} phase, the total \textit{operational} capacity of flow on edges in $\mathcal{E}_1$ is equal to $\lambda (1 - \frac{1+\varepsilon}{n} )$. This means that the generation of node $1$ is equal to $1 - \left(\lambda (1 - \frac{1+\varepsilon}{n}) - x \right) = 1 - \lambda +  \frac{1+\varepsilon}{n}\lambda + x.$ Because the objective function is a sum of squares of generations at each node, the objective function is minimal if the remaining generation in the network is equally distributed over all other nodes. In particular, each remaining node generates fraction $\frac{1}{n-1}$ of the remaining generation, which is given by $  (1-\lambda)\varepsilon + (n-1)\frac{1+\varepsilon}{n} \lambda - x$. Therefore, the value of the objective function corresponding to $g^\ast(d)$ is at least
\[\frac{1}{2}\sum_{i = 1}^n g^\ast_i(d)^2\geq \frac{1}{2}(1 - \lambda +  \frac{1+\varepsilon}{n}\lambda + x)^2 + \frac{n-1}{2}\left(\frac{1-\lambda}{n-1}\varepsilon + \frac{1+\varepsilon}{n} \lambda - \frac{x}{n-1}\right)^2.\]
Now, consider an alternative solution $\bar{g}(d) = (1-\lambda)d + \frac{1+\varepsilon}{n}\lambda e$. This solution is feasible as $\sum_{i = 1}^n \bar{g}_i(d) = 1+\varepsilon$ and $V(d - \bar{g}(d)) = \lambda Vd$, i.e., the corresponding flow on each edge is equal to the operational capacity limit. We find that the value of the objective function corresponding to this generation is at most 
\[\frac{1}{2}\sum_{i = 1}^n \bar{g}_i(d)^2 \leq  \frac{1}{2}(1 - \lambda +  \frac{1+\varepsilon}{n}\lambda)^2 + \frac{n-1}{2}\left((1-\lambda)\varepsilon + \frac{1+\varepsilon}{n} \lambda \right)^2.\]
With this, we find that, in the limit $\varepsilon\downarrow 0$, the difference in the values of the objective function for $g^\ast(d)$ and $\bar{g}(d)$ is at least
\begin{align*}
   &\lim_{\varepsilon \downarrow 0} \frac{1}{2}\left(\sum_{i = 1}^n g^\ast_i(d)^2 - \sum_{i = 1}^n \bar{g}_i(d)^2 \right)\\&\hspace{0.6cm}\geq \frac{1}{2}\left(1 - \lambda + \frac{\lambda}{n}\right)^2 + \frac{1}{2}x^2 + x\left(1 - \lambda + \frac{\lambda}{n}\right) + \frac{(n-1)}{2n^2}\lambda^2 \\&\hspace{1.2cm}+ \frac{x^2}{2(n-1)}  - \frac{x}{n}\lambda  - \frac{1}{2}\left(1 - \lambda + \frac{\lambda}{n}\right)^2- \frac{(n-1)}{2n^2}\lambda^2 \\&\hspace{0.6cm}= \frac{1}{2}\left(\frac{n}{n-1}x^2 \right)+ (1-\lambda)x.
   \end{align*}
We observe that since $x>0$, the limit is positive. Moreover, functions $\frac{1}{2}\sum_{i = 1}^n \bar{g_i}(d)^2$ and $\frac{1}{2}\sum_{i = 1}^n g_i^*(d)^2$ are polynomials in $\varepsilon$ and therefore continuous. Hence, for all $\varepsilon$ sufficiently small, $\frac{1}{2}\sum_{i = 1}^n \bar{g_i}(d)^2< \frac{1}{2}\sum_{i = 1}^n g_i^*(d)^2$ . This is a contradiction because we have assumed that $g^\ast(d)$ is optimal. 
\end{proof}
\subsection{Proof of Lemma \ref{tightConstraints}} \label{ProofTightConstraints}
The following lemma is proven using the continuity of the optimal solution of the DC-OPF problem with respect to the demand vector $d$.
\begin{proof}
    First, we observe that the inequality is true for the limiting case of $\varepsilon = 0$. That is, in this case, the optimal solution is given by $g^\ast(e_1) = (1-\lambda)e_1 + \frac{\lambda}{n}e$ \cite{Nesti_2020}. If $i$ is such that $(Ve_1)_i>0$, then,
    \[(Vg^\ast(e_1))_i = (1-\lambda)(Ve_1)_i + \frac{\lambda}{n}(Ve)_i = (1-\lambda)(Ve_1)_i < (1+\lambda) (Ve_1)_i.  \]
    In addition, we know that the solution $g^\ast(d)$ of the DC-OPF problem is continuous in $d$, as this problem is an example of multi-parametric quadratic programming with a strictly convex objective. For this class of problems, the continuity is a known result \cite[Theorem 1]{TONDEL2003489}. The continuity implies that there exists some function $f(\varepsilon, \gamma)$ with $\lim_{\varepsilon\rightarrow 0} f(\varepsilon, \gamma) = 0$ such that
    \[g^\ast(e_1 + \varepsilon \gamma) = g^\ast(e_1) + f(\varepsilon, \gamma).\]
    Hence, for all $\varepsilon>0$ sufficiently small, we have 
     \[(Vg^\ast(d))_i = \left(Vg^\ast(e_1)\right)_i + \left(Vf(\varepsilon,\gamma)\right)_i = (1-\lambda)(Ve_1)_i + \left(Vf(\varepsilon,\gamma)\right)_i < (1+\lambda) (Ve_1)_i.\]
    \end{proof}

\subsection{Proof of Lemma \ref{solutionCd}} \label{ProofSolutionCd}
The proof of this lemma relies on the KKT conditions and it uses similar ideas as the proof of Proposition \ref{propositionKKT}.
\begin{proof}
    Let $d\in \mathbb{R}^n_{\geq 0}$ be a demand vector such that $Cd\geq 0$. We prove that $g^\ast(d) = (1-\lambda)d + \lambda\bar{d}e$ is the optimal solution by showing that it satisfies the KKT conditions \eqref{eq1} -- \eqref{eq6}. 

    Let $\nu = 0$, $\mu = (1-\lambda)Cd$ and $\delta = -\bar{d}$. This choice of $\nu$ automatically satisfies \eqref{eq4}. Moreover, since $Cd\geq 0$ and $\lambda\in (0,1)$, we have that $\mu\geq 0$. This means that \eqref{eq2} is also satisfied for our choice of $\mu$. \\
    
    Next, we show that $e^Tg^\ast(d) = e^Td$ (Condition \eqref{eq6}). We find
    \[e^Tg^\ast(d) = e^T\left((1-\lambda)d + \lambda \bar{d}e\right) = (1-\lambda)n\bar{d} + \lambda n \bar{d} = n\bar{d} = e^Td.\]
    Furthermore, we find that both Conditions \eqref{eq5} and \eqref{eq3} hold because
    \[Vg^\ast(d) = (1-\lambda)Vd + \lambda \bar{d}Ve = (1-\lambda)Vd.\]
    Hence, it remains to show that Condition \eqref{eq1} is satisfied; specifically, that
    \begin{equation}g^\ast(d) + V^T(\nu - \mu) + \delta e = 0.\label{KKT1}\end{equation}
    Using the fact that $V= CL^+$, $L = C^TC$, $\left(L^+\right)^T = L^+$ and $L^+L = (I - \frac{1}{n}J)$ \cite{Nesti_2020}, we find
    \[V^T\mu = (1-\lambda)V^TCd = (1-\lambda)\left(L^+\right)^TC^TCd = (1-\lambda)L^+Ld = (1-\lambda)(I - \frac{1}{n}J)d = (1-\lambda)d - (1-\lambda)\bar{d}e.\]
    We substitute the expressions for $g^\ast(d)$, $\mu$, $\nu$ and $\delta$ into \eqref{KKT1}. This yields
    \[(1-\lambda)d + \lambda\bar{d}e - (1-\lambda)d + (1-\lambda)\bar{d}e - \bar{d}e = 0.\]
    This means that Condition \eqref{eq1} holds as well. With this, we conclude that all KKT conditions are satisfied, which proves that $g^\ast(d)$ is the optimal solution. 
\end{proof}

\section{Appendix: Cascade propagation in Example \ref{example}} \label{exampleAppendix}

In this section, we present the step-by-step computations regarding the cascade propagation on the graph example discussed in Section \ref{emergency}. The graph is also visualized in Figure \ref{graphAppendix}.

\begin{figure}[h]
\centering
\begin{tikzpicture}[ scale=0.6]
\begin{scope}[every node/.style={circle,thick,draw}]
    \node[circle,thick,draw, fill = lightgray] (1) at (0,-1.1) {1};
    \node (2) at (5,-4.6) {2};
    \node (3) at (3,-10) {3};
    \node (4) at (-3,-10) {4};
    \node (5) at (-5,-4.6) {5};
    \node (6) at (10,-4.6) {6};
\end{scope}

\begin{scope}[>={Stealth[black]},
              every node/.style={fill=white,circle},
              every edge/.style={draw=black,very thick}]
    \path [<-] (1) edge node {$(1)$} (2);
    \path [<-] (1) edge node {$(2)$} (3);
    \path [<-] (1) edge node {$(3)$} (4);
    \path [<-] (1) edge node {$(4)$} (5);
    \path [->] (2) edge node {$(5)$} (3);
    \path [->] (2) edge node {$(6)$} (4);
    \path [<-] (2) edge node {$(8)$} (6);
    \path [->] (4) edge node {$(9)$} (3);
    \path [<-] (3) edge node {$(10)$} (5);
    \path [<-] (4) edge node {$(11)$} (5);
    \path [<-] (5) edge node {$(7)$} (2);
\end{scope}

\end{tikzpicture}

\caption{Example of a graph where non-unique relative flow exceedance occurs with positive probability.}
\label{graphAppendix}

\end{figure}
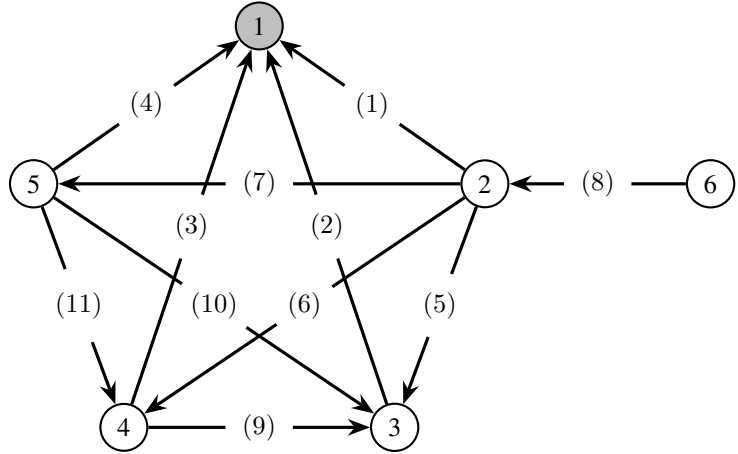


Recall that in Section \ref{example}, we have shown that after edges (7), (11), and (10) are removed from the graph due to the cascades, the relative flow exceedances on edges (5) (6) and (9) are equal for all $\gamma \in \tilde{S}$ as defined in \eqref{setS}. Here, we show that the cascade $(7)\rightarrow (11)\rightarrow (10)$ happens with positive probability and that edges (5), (6), and (9) are all maximizers of the relative exceedance.

Suppose that the initial failure occurs at edge (7). Then, the corresponding PTDF matrix $V^{(2)}$ is given by
\[V^{(2)} = \frac{1}{90}\begin{pmatrix}
    22 & -20 & 4 & 4 & 10 & -20\\
    18 & 0 & -18 & 0 & 0 & 0\\
    18 & 0 & 0 & -18 & 0 & 0\\
    17 & 5 & -1 & -1 & -25 & 5\\
    4 & -20 & 22 & 4 & 10 & -20\\
    4 & -20 & 4 & 22 & 10 & -20\\
    0 & 0 & 0 & 0 & 0 & 0 \\
    15 & 15 & 15 & 15 & 15 & -75\\
    0 & 0 & 18 & -18 & 0 & 0\\
    -1 & 5 & 17 & -1 & -25 & 5\\
    -1 & 5 & -1 & 17 & -25 & 5
\end{pmatrix}.\]
We observe that rows 2,3,8, and 9 of matrix $V^{(2)}$ remain unchanged compared to the matrix $V$. Therefore, we can immediately conclude that no exceedance occurs on edges $(2)$, (3), (8), and (9). To find the exceedance maximizer, we only need to consider the remaining edges. We obtain the following approximation:
\[\psi_1^{(2)} = \left|\frac{v^{(2)}_1(A_I - I)d}{\lambda^\ast v_1d}\right| = \frac{1}{3}\left|\frac{-\lambda(22 + \varepsilon(-20\gamma_2 + 4\gamma_3 + 4 \gamma_4 + 10 \gamma_5 - 20 \gamma_6)) }{\lambda^\ast (7 + \varepsilon( -5 \gamma_2  +\gamma_3 + \gamma_4 + \gamma_5 - 5\gamma_5)) }\right|\approx \frac{22\lambda}{21\lambda^\ast},\]
for $\varepsilon$ small. Note that the approximation is exact for $\varepsilon = 0$. Taking the same approach, we find that
\[\psi_4^{(2)}\approx \frac{17\lambda}{18\lambda^\ast}, ~~\psi_5^{(2)}\approx \frac{4\lambda}{3\lambda^\ast}, ~~ \psi_6^{(2)}\approx \frac{4\lambda}{3\lambda^\ast}.\]
Finally, we observe that 
\[\psi_{10}^{(2)} = \frac{\lambda}{3\lambda^\ast}\left|\frac{-\frac{1}{\varepsilon} + 5\gamma_2 +17\gamma_3 -\gamma_4 - 25\gamma_5 + 5\gamma_6}{6\gamma_3 - 6 \gamma_5}\right| = \frac{\lambda}{18\lambda^\ast}\frac{\frac{1}{\varepsilon} - 5\gamma_2 - 17\gamma_3 + \gamma_4 + 25\gamma_5 - 5\gamma_6}{\gamma_3 - \gamma_5},\]\[\psi_{11}^{(2)} = \frac{\lambda}{3\lambda^\ast}\left|\frac{-\frac{1}{\varepsilon}+ 5\gamma_2 -\gamma_3 + 17\gamma_4 - 25\gamma_5 + 5\gamma_6}{6\gamma_4 - 6 \gamma_5}\right| = \frac{\lambda}{18\lambda^\ast}\frac{\frac{1}{\varepsilon} - 5\gamma_2 +\gamma_3 -17 \gamma_4 + 25\gamma_5 - 5\gamma_6}{\gamma_4 - \gamma_5}.\]
Note that we change the sign of the nominator in these expressions, as for all $\varepsilon$ small enough, the term $\frac{1}{\varepsilon}$ dominates the remaining terms and $\gamma_3\geq \gamma_4\geq \gamma_5$ for all $\gamma\in \tilde{S}$. With this, we obtain the following approximations:
\[\psi^{(2)}_{10}\approx \frac{\lambda}{18\lambda^\ast\varepsilon(\gamma_3 - \gamma_5)} ~~\text{ and } ~~\psi^{(2)}_{11}\approx \frac{\lambda}{18\lambda^\ast\varepsilon(\gamma_4 - \gamma_5)}.\]
Based on the above approximations, we can conclude that for all $\varepsilon$ small enough, either edge $(10)$ or edge (11) has the largest exceedance. Moreover, since $\gamma_3>\gamma_4$ it holds that
\[\gamma_3 - \gamma_5>\gamma_4 - \gamma_5\]
and \[-17\gamma_3 + \gamma_4<\gamma_3-17 \gamma_4.\]
Together, these two conditions lead to $\psi_{10}^{(1)}<\psi_{11}^{(1)}$, indicating that edge (11) is the next one to fail in this cascade sequence.\\
Next, let $V^{(3)}$ be the PTDF matrix of the graph after the failure of edges (7) and (11). Then,
\[V^{(3)} = \frac{1}{240}\begin{pmatrix}
    59 & -55 & 11 & 5 & 35 & -55\\
    48 & 0 & - 48 & 0 & 0 & 0 \\
    49 & -5 & 1 & -65 & 25 & -5\\
    44 & 20 & -4 & 20 & -100 & 20\\
    11 & -55 & 59 & 5 & 35 & -55\\
    10 & -50 & 10 & 70 & 10 & -50\\
    0 & 0 & 0 & 0 & 0 & 0\\
    40 & 40 & 40 & 40 & 40 & -200\\
    1 & -5 & 49 & -65 & 25 & -5 \\
    -4 & 20 & 44 & 20 & -100 & 20\\
    0 & 0 & 0 & 0 & 0 & 0
\end{pmatrix}.\]
Again, we notice that the flow on edges (2) and (8) remains unchanged, resulting in no exceedance occurs on these two edges. For the remaining edges, we take the same approach as before and find the following approximations for the exceedances:
\[\psi^{(3)}_1 \approx \frac{59\lambda}{56\lambda^\ast}, ~~\psi^{(3)}_3 \approx \frac{49\lambda}{48\lambda^\ast},~~ \psi^{(3)}_4 \approx \frac{44\lambda}{48\lambda^\ast},~~ \psi^{(3)}_5 \approx \frac{11\lambda}{8\lambda^\ast}, ~~\psi^{(3)}_6 \approx \frac{10\lambda}{8\lambda^\ast},\]
\[ \psi^{(3)}_9 \approx \frac{\lambda}{48\lambda^\ast\varepsilon(\gamma_3 - \gamma_4)},~~ \psi^{(3)}_{10} \approx \frac{4\lambda}{48\lambda^\ast\varepsilon(\gamma_3 - \gamma_5)}.\]
Clearly, for $\varepsilon$ small enough, the maximal exceedance is obtained at edge (9) or (10). Since we assume that
$3\gamma_3> 4\gamma_4 - \gamma_5,$
we obtain that $4\gamma_3 - 4 \gamma_4 > \gamma_3 - \gamma_5$, from which it directly follows that $\psi_9^{(3)}<\psi_{10}^{(3)}$. Hence, the next edge to fail is (10). \\

To summarize, we have shown that for $\gamma \in \tilde{S}$, the edges of the graph will break in the following order:
\[(7)\rightarrow (11) \rightarrow (10).\]
Furthermore, we observe that the $\tilde{S}$ has a positive Lebesgue measure due to its non-empty interior. For example, consider $\gamma = (0,\frac{10}{100}, \frac{30}{100}, \frac{28}{100},\frac{27}{100}, \frac{5}{100}),$ which lies in the interior of $\tilde{S}$ because $\gamma_3 >\gamma_4$ and $3\gamma_3 = \frac{90}{100}> \frac{85}{100} = 4\gamma_4 - \gamma_5$. Since $\gamma$ is a realization of $n-1$ (scaled) i.i.d. Pareto distributions with support equal to $(0,\infty)$, the non-empty interior of $\tilde{S}$ implies that 
\[\mathbb{P}(\gamma\in \tilde{S})>0.\]
Lastly, we show that after the three edge failures have occurred, the maximizers of the exceedance are edges (5), (6), and (9). By following the same procedure as before, we compute the (approximate) exceedances for each edge and find
\[\psi_1^{(4)} \approx \frac{30\lambda}{28\lambda^\ast}, ~\psi_2^{(4)} \approx \frac{25\lambda}{24\lambda^\ast}, ~\psi_3^{(4)} \approx \frac{25\lambda}{24\lambda^\ast},~ \psi_4^{(4)} \approx \frac{20\lambda}{24\lambda^\ast},~\psi_8^{(4)} = \frac{\lambda}{\lambda^\ast},\]
\[\psi_5^{(4)} = \psi_6^{(4)} = \psi_9^{(4)} = \frac{5\lambda}{4\lambda^\ast}.\]
We observe that the above fraction is largest for edges (5), (6), and (9). Furthermore, for $\lambda^* \in [\lambda, \frac{5}{4}\lambda)$, the exceedance
on these edges is larger than 1, implying that these edges should fail next. This proves that the maximizer of the exceedance is non-unique for the provided graph and $\gamma\in \tilde{S}$, as we claimed in Example \ref{example}.
\end{document}